\documentclass[11pt,reqno]{article}

\usepackage[T1]{fontenc}
\usepackage[utf8]{inputenc}
\usepackage{lmodern}
\usepackage{microtype}
\usepackage{setspace}
\usepackage{amsmath,amssymb,amsthm,amsfonts,mathtools}
\usepackage{enumitem}
\numberwithin{equation}{section}

\usepackage{graphicx}
\usepackage{float}
\usepackage{subcaption}

\usepackage[margin=1in]{geometry}
\usepackage{hyperref}
\hypersetup{
    colorlinks=true,
    linkcolor=blue,
    citecolor=blue,
    urlcolor=blue
}
\usepackage[nameinlink,noabbrev]{cleveref}

\usepackage{makeidx}
\makeindex

\usepackage{mathrsfs}

\theoremstyle{plain}
\newtheorem{theorem}{Theorem}[section]
\newtheorem{proposition}[theorem]{Proposition}
\newtheorem{corollary}[theorem]{Corollary}
\newtheorem{lemma}[theorem]{Lemma}

\theoremstyle{definition}
\newtheorem{definition}[theorem]{Definition}
\newtheorem{remark}[theorem]{Remark}

\theoremstyle{plain}
\newtheorem*{theoremx}{Theorem}
\newtheorem*{propositionx}{Proposition}
\newtheorem*{corollaryx}{Corollary}
\newtheorem*{lemmax}{Lemma}

\theoremstyle{definition}
\newtheorem*{definitionx}{Definition}
\newtheorem*{remarkx}{Remark}

\newenvironment{theorem*}{\begin{theoremx}}{\end{theoremx}}
\newenvironment{proposition*}{\begin{propositionx}}{\end{propositionx}}
\newenvironment{corollary*}{\begin{corollaryx}}{\end{corollaryx}}
\newenvironment{lemma*}{\begin{lemmax}}{\end{lemmax}}
\newenvironment{definition*}{\begin{definitionx}}{\end{definitionx}}
\newenvironment{remark*}{\begin{remarkx}}{\end{remarkx}}

\newcommand{\R}{\mathbb{R}}

\newcommand{\C}{\mathbb{C}}

\newcommand{\Id}{\mathrm{Id}}

\DeclareMathOperator{\Sym}{Sym}
\DeclareMathOperator{\Spec}{Spec}
\DeclareMathOperator{\Crit}{Crit}
\DeclareMathOperator{\Hess}{Hess}

\title{
Spectral Selection and Minimal Morse Structures
on the Poincar\'e Dodecahedral Space
}

\author{
Carlos A. Cadavid \and Juan D. V\'elez \and Sergio Lenis
}

\date{\today}

\begin{document}

\maketitle

\begin{abstract}
We study the long-time behavior of the heat equation on the spherical
Poincar\'e dodecahedral space \(M=S^3/I^*\) and introduce a spectral
selection property \(P\), asserting that for a dense open set of initial
data, the solution eventually becomes a minimal Morse function.

We first establish an obstruction principle: if the first positive
eigenspace of the Laplace--Beltrami operator contains a Morse function
that is not minimal, then property \(P\) fails. Using an explicit
representation-theoretic description of the spherical first eigenspace,
we show that the round metric on \(M\) violates property \(P\).

We then develop a perturbative spectral selection mechanism. Using
conformal variations and a finite-dimensional reduction of the
first-order splitting of the lowest eigenvalue cluster, we construct
metrics arbitrarily close to the spherical metric for which the first
eigenvalue is simple and the corresponding eigenfunction is minimal
Morse with exactly six critical points. As a consequence, these nearby
metrics satisfy property \(P\).

This establishes both the failure and the restoration of minimal Morse
selection on \(M\), and provides a concrete spectral mechanism linking
representation theory, eigenvalue splitting, and global Morse structure.
\end{abstract}

\tableofcontents
\bigskip

\section{Introduction}

Let \((N,g)\) be a smooth closed Riemannian manifold, and consider the heat
equation
\[
\partial_t u=-\Delta_g u.
\]
Its large-time behavior is governed by the lowest part of the Laplace
spectrum: after subtraction of the constant mode and exponential rescaling, the
solution converges to the projection of the initial datum onto the first
nonzero eigenspace. This suggests a natural question: for generic initial data,
does the heat flow eventually select functions of minimal Morse-theoretic
complexity?

In this paper we study that question on the Poincar\'e dodecahedral space
\[
M=S^3/I^*,
\]
where \(I^*\subset SU(2)\) is the binary icosahedral group. We say that a
smooth closed Riemannian manifold \((N,g)\) has property \(P\) if there exists
an open dense subset
\[
\mathcal S\subset L^2(N,g)
\]
such that for every \(f\in\mathcal S\), the corresponding heat evolution
\(u(\cdot,t)\) is, for all sufficiently large \(t\), a minimal Morse function.

On \(M\), minimality has a concrete numerical form. Since \(M\) has Heegaard
genus \(2\), a Morse function on \(M\) is minimal if and only if it has exactly
six critical points. The problem is therefore spectral: does the first nonzero
eigenspace generically force six critical points, or can it support more
complicated Morse profiles?

For the spherical metric the answer is negative. Let
\[
g_{\mathrm{sph}}
\]
denote the round metric on \(M\). The first positive eigenvalue of
\(-\Delta_{g_{\mathrm{sph}}}\) is
\[
168,
\]
and the corresponding eigenspace
\[
E_{168}(g_{\mathrm{sph}})
\]
has dimension \(13\). Representation-theoretically, this eigenspace is the
degree-\(12\) binary-icosahedral block, equivalently the space arising from the
unique \(I^*\)-invariant line in \(\Sym^{12}(\C^2)\). This structure makes the
round case explicit enough to analyze in detail, but also highly degenerate
from the spectral point of view.

The first part of the paper proves that this degeneracy obstructs property
\(P\). We first establish an abstract principle valid on any closed manifold:
if the first nonzero eigenspace contains a Morse function that is not minimal,
then property \(P\) fails. We then apply this to the spherical metric on
\(M\). Starting from the distinguished \(I^*\)-invariant first eigenfunction
\(F_0\), we determine its full critical set and show that it consists exactly
of the three exceptional Hopf fibers of orders \(5\), \(3\), and \(2\), with
Morse--Bott nondegeneracy along each one. We then construct an explicit
splitting direction along the order-\(2\) fiber and use it to produce a Morse
function in \(E_{168}(g_{\mathrm{sph}})\) with twelve critical points. It
follows that
\[
(M,g_{\mathrm{sph}})
\]
does not satisfy property \(P\).

The second part shows that this failure is unstable under perturbation. We
consider conformal deformations
\[
g_\varepsilon=e^{2\varepsilon\rho}g_0,
\qquad g_0=g_{\mathrm{sph}},
\]
and study the first-order splitting of the spherical first eigenspace
\[
E=E_{168}(g_0).
\]
The splitting is encoded by a finite-dimensional space
\[
\mathscr B\subset \Sym(E)
\]
of realizable symmetric operators obtained by compressing multiplication by
\[
q_\rho=2\lambda_{\mathrm{sph}}\rho+\frac12\Delta_{g_0}\rho.
\]
Inside \(\mathscr B\) we construct a canonical seed operator \(A_0\) whose
simple lowest eigendirection is the invariant line \(\mathbb RF_0\), and we
prove that the lowest-eigenline map is a submersion at \(A_0\). Consequently,
every projective line sufficiently close to \([F_0]\) is realized as the simple
lowest eigendirection of some first-order conformal splitter.

The remaining task is geometric: one must construct minimal-Morse lines in
\(E\) arbitrarily close to \([F_0]\). This is achieved by exploiting the global
critical-circle structure of the invariant seed. We introduce three explicit
coefficient functions
\[
F_1,\qquad F_3,\qquad F_4
\]
in the degree-\(12\) matrix-coefficient model. Their weights force each one to
survive on exactly one exceptional fiber, and the surviving restriction is a
nonzero quotient mode of frequency \(1\). Small perturbations by
\[
\Re(F_1),\qquad \Re(F_3),\qquad \Re(F_4)
\]
therefore split the three Morse--Bott critical circles independently, producing
exactly two nondegenerate critical points on each circle and no others. In this
way we obtain explicit nearby lines in \(E\) generated by Morse functions with
exactly six critical points.

Combining this geometric construction with local realizability and elliptic
perturbation theory for a simple first branch, we prove that there exist smooth
metrics arbitrarily \(C^\infty\)-close to \(g_{\mathrm{sph}}\) such that:
\begin{enumerate}[label=\arabic*.]
\item the first positive eigenvalue is simple;
\item the corresponding first eigenfunction is Morse with exactly six critical
points;
\item hence that first eigenfunction is minimal Morse;
\item consequently the metric has property \(P\).
\end{enumerate}

Thus the spherical metric on the Poincar\'e dodecahedral space fails property
\(P\) because maximal symmetry permits non-minimal Morse configurations inside a
large first eigenspace. But this failure disappears under arbitrarily small
perturbations: nearby metrics recover a simple first eigenvalue and a
minimal-Morse first eigenfunction, so the heat flow generically selects minimal
topology in the large-time regime.

\medskip

\noindent
The paper is organized as follows. Part~I proves the abstract obstruction
theorem and applies it to the spherical metric by constructing a non-minimal
Morse function in the first eigenspace. Part~II develops the perturbative
theory: realizable first-order conformal splitting, local realizability of
lowest eigenlines, explicit construction of nearby minimal-Morse directions, and
the perturbative existence theorem beyond the spherical metric.

\part{The spherical metric fails property \(P\)}
\label{part:spherical-obstruction}

Throughout this part, let
\[
M=S^3/I^*
\]
be the Poincar\'e dodecahedral space, and let \(g_0=g_{\mathrm{sph}}\) denote
its spherical metric.

\section{Property \(P\) and an abstract obstruction theorem}
\label{sec:partI-property-P}

\begin{definition}
Let \((N,g)\) be a smooth closed connected Riemannian manifold. We say that
\((N,g)\) has property \(P\) if there exists an open dense subset
\[
\mathcal S\subset L^2(N,g)
\]
such that for every \(f\in\mathcal S\), the heat evolution \(u(\cdot,t)\) with
initial condition \(u(\cdot,0)=f\) is, for all sufficiently large times, a
minimal Morse function.
\end{definition}

The first step is an abstract obstruction principle.

\begin{theorem}[First-eigenspace obstruction]
\label{thm:partI-obstruction}
Let \((N,g)\) be a smooth closed connected Riemannian manifold. Assume that the
first nonzero eigenspace \(E_{\lambda_1}(g)\) contains a Morse function which is
not minimal. Then \((N,g)\) does not have property \(P\).
\end{theorem}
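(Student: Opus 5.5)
We will show that the complement of any candidate set has nonempty interior, which rules out density. Fix a Morse function \(\phi\in E_{\lambda_1}(g)\) that is not minimal. Let \(\Pi_0\) denote the \(L^2\)-orthogonal projection onto constants, \(\Pi_1\) the projection onto \(E_{\lambda_1}(g)\), and \(\lambda_2>\lambda_1\) the next eigenvalue. For \(f\in L^2\) the eigenfunction expansion gives
\[
u(\cdot,t)=\Pi_0 f+e^{-\lambda_1 t}\bigl(\Pi_1 f+R_f(t)\bigr),
\qquad \|R_f(t)\|_{C^2}\le C e^{-(\lambda_2-\lambda_1)t}\|f\|_{L^2}\quad (t\ge1).
\]
The \(C^2\) bound follows from parabolic smoothing: \(e^{-\Delta_g}\) maps \(L^2\) into \(C^k\), and each remaining eigenmode decays at rate at least \(e^{-\lambda_2 t}\). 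Adding a constant and multiplying by a positive scalar change neither the critical points nor their nondegeneracy. So for large \(t\), \(u(\cdot,t)\) is Morse with the same number of critical points as \(\Pi_1 f+R_f(t)\).

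\textbf{Key step (stability).} Morse functions with a fixed number of critical points form a \(C^2\)-open set on a closed manifold. Concretely, if \(h\) is Morse with critical points \(p_1,\dots,p_k\), there is \(\delta>0\) such that every \(\tilde h\) with \(\|\tilde h-h\|_{C^2}<\delta\) is Morse with exactly \(k\) critical points. One proves this by applying the inverse function theorem to \(d\tilde h\) near each \(p_i\), and by using the uniform lower bound \(|dh|\ge c>0\) away from small balls around the \(p_i\). Apply this to \(h=\phi\) with \(k\) non-minimal and some \(\delta\).

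Now define the open set \(U=\{f\in L^2:\|\Pi_1 f-\phi\|_{L^2}<\eta\}\). On the finite-dimensional space \(E_{\lambda_1}\) all norms are equivalent, so taking \(\eta\) small makes \(\|\Pi_1 f-\phi\|_{C^2}<\delta/2\) for every \(f\in U\). The difficulty is that the threshold time, after which \(\|R_f(t)\|_{C^2}<\delta/2\), depends on \(\|f\|_{L^2}\). We handle this by restricting to the open set \(U'=U\cap\{\|f\|_{L^2}<\|\phi\|+1\}\), which is nonempty because it contains \(\phi\) itself. On \(U'\) there is a uniform time \(T\) such that for all \(t\ge T\) we have \(\|\Pi_1 f+R_f(t)-\phi\|_{C^2}<\delta\). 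Hence \(u(\cdot,t)\) is Morse with \(k\) critical points, so it is not minimal, for every \(t\ge T\).

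\textbf{Conclusion.} Suppose \(\mathcal S\) were an open dense set as in the definition of property \(P\). Density forces \(\mathcal S\cap U'\neq\emptyset\). Any \(f\) in this intersection would be minimal Morse for all large \(t\) and, by the previous step, also non-minimal for all \(t\ge T\), which is a contradiction. The only point needing care is the uniform \(C^2\) control of the remainder, which comes from parabolic smoothing. Once that bound is in place, the argument is routine. Openness of \(\mathcal S\) is not even needed; density alone suffices.
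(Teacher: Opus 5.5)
Your proof is correct and follows essentially the same route as the paper: you take the preimage under $\Pi_1$ of a $C^2$-neighborhood of the non-minimal Morse eigenfunction, which gives a nonempty open set in $L^2(N,g)$ on which the rescaled heat flow is eventually Morse and non-minimal, and a dense $\mathcal S$ would have to meet that set. The one difference is your uniform threshold time $T$ on the bounded piece $U'$; it is harmless but unnecessary, because the contradiction only needs the pointwise convergence $e^{\lambda_1 t}(u(\cdot,t)-\Pi_0 f)\to \Pi_1 f$ in $C^2$ for a single $f\in\mathcal S\cap U$.
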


\begin{proof}
Let
\[
\Pi_1:L^2(N,g)\to E_{\lambda_1}(g)
\]
be the orthogonal projection onto the first eigenspace, and let
\[
u\in E_{\lambda_1}(g)
\]
be Morse and non-minimal.

Since the Morse property is open in the \(C^2\)-topology, and the number of
critical points of a Morse function is locally constant in the \(C^2\)-topology,
there exists a \(C^2\)-neighborhood \(\mathcal U\subset C^\infty(N)\) of \(u\)
such that every \(v\in\mathcal U\) is Morse and has the same number of critical
points as \(u\). In particular, every \(v\in\mathcal U\) is again non-minimal.

Define
\[
\mathcal V=\Pi_1^{-1}\bigl(\mathcal U\cap E_{\lambda_1}(g)\bigr)\subset L^2(N,g).
\]
Since \(\Pi_1\) is continuous and \(\mathcal U\cap E_{\lambda_1}(g)\) is a
nonempty relatively open subset of the finite-dimensional space
\(E_{\lambda_1}(g)\), the set \(\mathcal V\) is a nonempty open subset of
\(L^2(N,g)\).

Fix \(f\in\mathcal V\), and let \(w(\cdot,t)\) be the heat evolution with
initial condition \(f\). Write the spectral decomposition of \(f\) as
\[
f=f_0+f_1+f_{\ge2},
\]
where
\[
f_0\in E_{\lambda_0}(g),\qquad
f_1=\Pi_1f\in E_{\lambda_1}(g),\qquad
f_{\ge2}\in \overline{\bigoplus_{k\ge2}E_{\lambda_k}(g)}.
\]
Then
\[
w(\cdot,t)=f_0+e^{-\lambda_1t}f_1+\sum_{k\ge2}e^{-\lambda_kt}f_k.
\]
Hence
\[
e^{\lambda_1t}\bigl(w(\cdot,t)-f_0\bigr)
=
f_1+\sum_{k\ge2}e^{-(\lambda_k-\lambda_1)t}f_k
\longrightarrow f_1
\]
in \(C^\infty(N)\), and therefore in \(C^2(N)\), as \(t\to\infty\).

Since \(f_1\in\mathcal U\cap E_{\lambda_1}(g)\), it follows that for all
sufficiently large \(t\),
\[
e^{\lambda_1t}\bigl(w(\cdot,t)-f_0\bigr)\in \mathcal U.
\]
Subtracting the constant \(f_0\) and multiplying by the positive scalar
\(e^{-\lambda_1t}\) do not change the critical set or Morse character.
Therefore \(w(\cdot,t)\) itself is Morse and non-minimal for all sufficiently
large \(t\).

Thus every \(f\in\mathcal V\) fails the defining property of \(P\). Since
\(\mathcal V\) is a nonempty open subset of \(L^2(N,g)\), property \(P\) fails.
\end{proof}

\section{Minimal Morse functions on the Poincar\'e dodecahedral space}
\label{sec:partI-minimal-morse}

We now specialize to \(M=S^3/I^*\).

\begin{proposition}
\label{prop:partI-minimal-six}
A Morse function on \(M\) is minimal if and only if it has exactly six critical
points.
\end{proposition}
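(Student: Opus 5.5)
Here "minimal" means that the number of critical points equals the minimum over all Morse functions on \(M\). The plan is therefore to prove two bounds: every Morse function on \(M\) has at least six critical points, and some Morse function on \(M\) has exactly six. Once both are in place, a Morse function is minimal if and only if it has exactly six critical points.

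For the upper bound, I would use that \(M\) has Heegaard genus \(2\). Take a genus-\(2\) Heegaard splitting \(M=H_2\cup_\Sigma H_2'\). A genus-\(g\) handlebody carries a Morse function, constant and maximal on the boundary and gradient-like transverse to it, with one index-\(0\) and \(g\) index-\(1\) critical points. Gluing this function with its reflection on the other handlebody gives a Morse function on \(M\) with critical points of indices \(0,1,1,2,2,3\), hence six in total.

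For the lower bound, let \(f\) be Morse with \(c_i\) critical points of index \(i\). The function \(f\) has a unique... rather, at least one minimum and one maximum, so \(c_0\ge1\) and \(c_3\ge1\). The remaining input is \(c_1\ge2\) and, by the symmetry \(f\mapsto -f\), \(c_2\ge2\). Homology alone will not give this, since \(M\) is a homology sphere and the Morse inequalities only yield \(c_1\ge0\). I would argue as follows.

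\begin{itemize}
\item Since \(\pi_1(M)=I^*\) is nontrivial, we need \(c_1\ge1\): the cellular decomposition from \(f\) gives a presentation of \(\pi_1(M)\) with \(c_1\) generators after collapsing the \(0\)-cells to one, and \(c_1=0\) would force \(\pi_1(M)=1\).
\item To get \(c_1\ge2\), one could note that \(I^*\) is not cyclic, so it needs at least two generators. However, with several minima the presentation has \(c_1-(c_0-1)\) generators, so one should first cancel extra index-\(0\)/index-\(1\) pairs. This is cleaner via the rank inequality \(c_1-c_0+1\ge \operatorname{rank}\pi_1(M)=2\), obtained from the CW structure with \(c_0\) zero-cells.
\item Together this gives \(c_1\ge c_0+1\ge2\), and dually \(c_2\ge c_3+1\ge2\).
\end{itemize}

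Summing the bounds gives at least six critical points.

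The main obstacle is this rank step, in particular handling extra minima and maxima correctly. Alternatively, one can cite the standard identification, via handle cancellation (Smale/Milnor) in dimension three, of the minimal number of critical points with \(2+2\,g(M)\). Here \(g(M)\) is the Heegaard genus, and \(g(M)=2\) because \(M\) is not \(S^3\) and lens spaces have cyclic fundamental group. I would present the rank argument, justify \(\operatorname{rank} I^*=2\), and cite the genus-\(2\) fact for sharpness.
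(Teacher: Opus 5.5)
Your proof is correct, but you get the lower bound by a different route than the paper.

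\textbf{The paper's route.} It takes both bounds from the Heegaard genus. It cites $g(M)=2$ together with the correspondence between Morse functions having one minimum, one maximum and $2h+2$ critical points and genus-$h$ Heegaard splittings. It then reads off $2\cdot 2+2=6$.

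\textbf{Your route.} You use the genus-$2$ splitting only to produce a function with six critical points. You prove the lower bound directly from the CW structure of $f$:
\begin{itemize}
\item the $1$-skeleton is a connected graph with $c_0$ vertices and $c_1$ edges, so $\pi_1(M)=I^*$ is generated by $c_1-c_0+1$ elements;
\item since $I^*$ is not cyclic, $c_1\ge c_0+1$, and dually $c_2\ge c_3+1$;
\item hence $\sum c_i\ge 2c_0+2c_3+2\ge 6$.
\end{itemize}

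\textbf{What each approach buys.} Your argument handles Morse functions with several minima or maxima directly. The paper's cited correspondence is stated only for one minimum and one maximum, so it leaves that case implicit; it would otherwise need index-$0/1$ and $2/3$ handle cancellation. The paper's route is more general: the minimum $2g+2$ holds for every closed orientable $3$-manifold. Your bound is $2\operatorname{rank}\pi_1+2$, which is sharp here only because $\operatorname{rank} I^*=g(M)=2$. In general the rank can be strictly smaller than the Heegaard genus.

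\textbf{A small point on justification.} The remark ``$M\ne S^3$ and lens spaces have cyclic $\pi_1$'' only gives $g(M)\ge 2$, which your rank argument already implies. Sharpness needs $g(M)\le 2$, and that still requires the citation you mention. One source is the Seifert fibration of $M$ over $S^2$ with three exceptional fibers, which carries a vertical genus-$2$ splitting.
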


\begin{proof}
The Poincar\'e dodecahedral space has Heegaard genus \(2\); see, for example,
\cite{Schultens,Boileau}. For a closed connected orientable \(3\)-manifold,
Morse functions with exactly one minimum and one maximum and a total of
\(2h+2\) critical points correspond to Heegaard splittings of genus \(h\); see,
for example, \cite[Chap.~6]{Schultens} or \cite[Chap.~3]{Matsumoto}. Hence the
least possible number of critical points of a Morse function on \(M\) is
\[
2\cdot 2+2=6.
\]
Therefore a Morse function on \(M\) is minimal if and only if it has exactly six
critical points.
\end{proof}

Thus, in order to apply \Cref{thm:partI-obstruction} to the spherical metric, it
is enough to construct a Morse function in the first eigenspace having at least
eight critical points.

\section{The first eigenspace of the spherical metric}
\label{sec:partI-first-eigenspace}

\begin{proposition}
\label{prop:partI-first-eigenvalue}
For the spherical metric \(g_0=g_{\mathrm{sph}}\) on \(M=S^3/I^*\), the first
nonzero eigenvalue is
\[
\lambda_1(g_0)=168,
\]
and the corresponding eigenspace
\[
E=E_{168}(g_0)
\]
has dimension \(13\).
\end{proposition}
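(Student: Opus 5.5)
Eigenfunctions on $M=S^3/I^*$ are exactly the $I^*$-invariant eigenfunctions on the round $S^3$. We therefore reduce to computing invariant dimensions in the spherical harmonic spaces of $S^3$, and we find the first degree with a nonzero invariant.

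\textbf{Step 1 (spectrum of $S^3$ and its bi-representation).} On the unit $S^3$ the eigenvalues of $-\Delta$ are $k(k+2)$, with eigenspace $\mathcal H_k$ the degree-$k$ harmonic polynomials. Identify $S^3=SU(2)$. Under the action of $SU(2)\times SU(2)$ by left and right multiplication, Peter--Weyl gives
\[
\mathcal H_k\cong \Sym^k(\C^2)\otimes\Sym^k(\C^2)
\]
as a real form of matrix coefficients of the irreducible $V_k=\Sym^k(\C^2)$. Since $I^*$ acts by (say) left multiplication, we get
\[
\dim\mathcal H_k^{I^*}=(k+1)\cdot\dim V_k^{I^*}.
\]
Odd $k$ contribute nothing because $-1\in I^*$ acts by $(-1)^k$.

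\textbf{Step 2 (invariants of $I^*$ on $V_k$).} We compute $\dim V_k^{I^*}$ for $k\le 12$ by the classical invariant theory of the icosahedral group. By Klein, the ring $\C[x,y]^{I^*}$ is generated by invariants of degrees $12$, $20$, $30$, with one relation in degree $60$. Hence the Molien series is
\[
\frac{1+t^{30}}{(1-t^{12})(1-t^{20})},
\]
so the first positive degree with a nonzero invariant is $12$, where the invariant is one-dimensional (the vertex form $xy(x^{10}+11x^5y^5-y^{10})$). As a cross-check, we can average characters over the $9$ conjugacy classes of $I^*$ via
\[
\frac{1}{120}\sum_{g}\chi_{V_k}(g),\qquad \chi_{V_k}(g)=\frac{\sin((k+1)\theta)}{\sin\theta},
\]
with the rotation angles $\theta\in\{0,\pi,\pi/2,\pi/3,2\pi/3,\pi/5,2\pi/5,3\pi/5,4\pi/5\}$ and class sizes $1,1,30,20,20,12,12,12,12$. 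Doing this confirms that $\dim V_k^{I^*}=0$ for even $k$ with $0<k<12$, and that $\dim V_{12}^{I^*}=1$.

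\textbf{Step 3 (conclusion).} The first nonzero eigenvalue is therefore $12\cdot14=168$, and its eigenspace has dimension
\[
13\cdot1=13.
\]
Real-valuedness is handled by noting that $V_k$ is of real (quaternionic $\otimes$ quaternionic) type for even $k$. The complex invariant space in $\mathcal H_k\otimes\C$ is thus the complexification of the real one, so the real dimension equals the complex dimension.

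The main obstacle is Step 2's vanishing for $k=2,\dots,10$. We will rely on Klein's theorem or on the explicit character computation, whichever is cleaner to cite. We also need to be careful that the right and left actions are set up so that the multiplicity factor is exactly $k+1$.
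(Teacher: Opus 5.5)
Your proposal is correct and follows the same invariant-theoretic route as the paper. The paper's proof simply cites the classical fact that the first nonconstant $I^*$-invariant harmonics on $S^3$ occur in degree $12$, which gives $12\cdot 14=168$ and dimension $13$. You supply the details it leaves to the references: the Peter--Weyl count $(k+1)\dim V_k^{I^*}$, and Klein's Molien series $(1+t^{30})/((1-t^{12})(1-t^{20}))$ together with a character check.

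Your real-type remark is garbled: for even $k$, $V_k$ is itself of real type. It is also unnecessary, since the Laplacian and the $I^*$-action are real, so the real and imaginary parts of a complex $I^*$-invariant eigenfunction are real invariant eigenfunctions, and the real and complex dimensions therefore agree.
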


\begin{proof}
This is the classical binary-icosahedral invariant-theoretic description of the
spherical spectrum: the first nonconstant \(I^*\)-invariant harmonic modes on
\(S^3\) occur in degree \(12\), hence on the quotient they have eigenvalue
\(12(12+2)=168\), and the corresponding eigenspace has dimension \(13\); see
\cite{WeeksEigenmodes2006,NashHarmonics}.
\end{proof}

Inside \(E\) there is a distinguished \(I^*\)-invariant line. Let
\[
F_0\in E
\]
be a nonzero generator of this line. Geometrically, \(F_0\) is the Hopf lift of
the unique \(A_5\)-invariant spherical harmonic of degree \(6\) on \(S^2\).

\section{The invariant sextic and the order-\(2\) exceptional fiber}
\label{sec:partI-invariant-sextic}

Since the \(A_5\)-invariant degree-\(6\) spherical harmonic is unique up to
scale, we may work with any nonzero explicit \(A_5\)-invariant sextic and then
harmonically correct it. Set
\[
P(x,y,z)
=
(\tau^2x^2-y^2)(\tau^2y^2-z^2)(\tau^2z^2-x^2),
\qquad
\tau=\frac{1+\sqrt5}{2}.
\]

\begin{lemma}
\label{lem:partI-harmonic-correction}
There exists a constant \(c_\tau\in\R\) such that
\[
\widetilde P(x,y,z)=P(x,y,z)+c_\tau(x^2+y^2+z^2)^3
\]
is harmonic on \(\R^3\). In particular, the restriction \(P|_{S^2}\) differs by
a constant from the unique \(A_5\)-invariant spherical harmonic of degree \(6\).
\end{lemma}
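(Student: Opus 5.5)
The approach is to find \(c_\tau\) by comparing Laplacians. Since \(P\) is a homogeneous sextic, \(\Delta P\) is a homogeneous quartic. If \(\Delta P=a\,(x^2+y^2+z^2)^2\) for some \(a\in\R\), then the identity \(\Delta r^{6}=6\cdot 7\,r^{4}=42\,r^4\) (where \(r^2=x^2+y^2+z^2\)) lets us take \(c_\tau=-a/42\), and \(\widetilde P\) is harmonic. So the whole proof reduces to showing that \(\Delta P\) is a multiple of \(r^4\).

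\emph{Structural route.} First check that \(P\) is invariant under a group \(G\subset SO(3)\) containing the tetrahedral group \(A_4\). Cyclic permutation \(x\mapsto y\mapsto z\mapsto x\) permutes the three factors of \(P\). Sign changes of the coordinates leave each factor unchanged. Together these generate \(A_4\). Invariance under the full icosahedral group \(A_5\) needs one additional order-\(5\) rotation, which is the classical fact that the six lines \(\tau x=\pm y\) and their cyclic images are the six icosahedral axes.

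Since \(\Delta\) commutes with rotations, \(\Delta P\) is an \(A_5\)-invariant quartic. By Molien's series, the \(A_5\)-invariant ring on \(\R^3\) is generated in degrees \(2,6,10\) (plus \(15\)). Hence the only invariant quartic is \(r^4\), up to scale, and \(\Delta P=a r^4\).

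\emph{Direct route.} To avoid depending on the order-\(5\) symmetry, I would instead use only the \(A_4\)-invariance, which is immediate. \(A_4\)-invariant quartics are even in each variable and cyclically symmetric, so they lie in the span of
\[
\sigma_4=x^4+y^4+z^4,\qquad \sigma_{22}=x^2y^2+y^2z^2+z^2x^2 .
\]
Note that \(r^4=\sigma_4+2\sigma_{22}\). Writing \(\Delta P=\alpha\sigma_4+\beta\sigma_{22}\), it remains to verify \(\beta=2\alpha\).

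Both coefficients can be extracted from a small computation. Expand \(P\) only far enough to read off its monomials \(x^6\), \(x^4y^2\), \(x^2y^4\), \(x^2y^2z^2\), and so on, using cyclic symmetry to save work. Then apply \(\Delta\) monomial by monomial. The relation \(\beta=2\alpha\) should then follow from \(\tau^2=\tau+1\), equivalently \(\tau^2-\tau^{-2}=\sqrt5\)-type identities. This identity check is the main obstacle: it is exactly where the golden ratio, i.e.\ the hidden icosahedral symmetry, enters. If \(\beta=2\alpha\) fails, the sextic is only tetrahedral and would need to be replaced.

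\emph{Second assertion.} We have \(\widetilde P|_{S^2}=P|_{S^2}+c_\tau\). The function \(\widetilde P\) is an \(A_5\)-invariant harmonic of degree \(6\), and it is nonzero. Indeed, at \((1,0,0)\) the first two factors of \(P\) equal \(\tau^2\) and \(0\), so \(P(1,0,0)=0\). Meanwhile, by the cyclic symmetry the factor \(\tau^2y^2-z^2\) vanishes on the whole \(x\)-axis, whereas \(r^6\) does not vanish at any nonzero point. So \(P\) is not proportional to \(r^6\), and \(\widetilde P\ne 0\). By the uniqueness of the \(A_5\)-invariant degree-\(6\) spherical harmonic invoked above, \(\widetilde P|_{S^2}\) is that harmonic up to a nonzero scalar. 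Rescaling \(\widetilde P\) absorbs this scalar, which gives the claim.
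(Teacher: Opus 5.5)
Your structural route is, in substance, exactly the paper's proof: $\Delta P$ is an $A_5$-invariant quartic, the invariant quartics are spanned by $(x^2+y^2+z^2)^2$, and $c_\tau=-\kappa_\tau/42$.

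Your direct route is a genuinely different, purely computational argument for the first sentence, and it does close. Writing $t=\tau^2$,
\[
P=(t^3-1)x^2y^2z^2-t^2\,(x^4y^2+y^4z^2+z^4x^2)+t\,(x^4z^2+y^4x^2+z^4y^2).
\]
So in your notation $\alpha=2(t-t^2)$ and $\beta=2(t^3-1)+12(t-t^2)$, whence
\[
\beta-2\alpha=2(t-1)(t^2-3t+1)=0,
\]
because $\tau^4-3\tau^2+1=0$. This gives the explicit values $\kappa_\tau=-2\tau^3$ and $c_\tau=\tau^3/21$, which the paper never computes. It also proves the harmonic-correction sentence without Molien's series or any order-$5$ rotation.

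What it does not buy you is independence from the icosahedral symmetry for the whole lemma. The harmonic sextics invariant under sign changes and cyclic permutations form a two-dimensional space. It also contains $(x^2-y^2)(y^2-z^2)(z^2-x^2)$, which is the $t=1$ case of the same computation. So the ``in particular'' clause genuinely requires $A_5$-invariance of $P$, and your last paragraph does use it. Your check that $\widetilde P\neq 0$ is a point the paper leaves implicit.

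One correction to the structural route concerns the axes. The zero sets $\tau x=\pm y$ are planes. The six $5$-fold axes are their normals $(\tau,\pm1,0)$, together with the cyclic images $(0,\tau,\pm1)$ and $(\pm1,0,\tau)$. The lines $y=\pm\tau x$ in the $xy$-plane are instead axes of the mirror-image icosahedron, whose rotation group does not preserve $P$. With the correct axes, each icosahedral rotation permutes the six linear factors up to sign, so it sends $P$ to $\pm P$. The sign is $+1$ because $A_5$ is perfect.
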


\begin{proof}
Because \(P\) is homogeneous of degree \(6\), its Euclidean Laplacian is
homogeneous of degree \(4\). Since \(P\) is \(A_5\)-invariant,
\[
\Delta_{\R^3}P
\]
is an \(A_5\)-invariant quartic. The space of \(A_5\)-invariant quartics is
one-dimensional, generated by \((x^2+y^2+z^2)^2\). Therefore
\[
\Delta_{\R^3}P=\kappa_\tau(x^2+y^2+z^2)^2
\]
for some \(\kappa_\tau\in\R\).

On the other hand,
\[
\Delta_{\R^3}(x^2+y^2+z^2)^3=42(x^2+y^2+z^2)^2.
\]
Thus choosing
\[
c_\tau=-\frac{\kappa_\tau}{42}
\]
makes \(\widetilde P\) harmonic. Since \((x^2+y^2+z^2)^3=1\) on \(S^2\), the
restrictions of \(P\) and \(\widetilde P\) to \(S^2\) differ only by a
constant, and hence define the same spherical harmonic line.
\end{proof}

Let
\[
p_2
=
\left(\frac{1}{2\tau},\frac12,\frac{\tau}{2}\right)\in S^2.
\]
This point is an edge midpoint of the icosahedral orbit structure, hence
represents an order-\(2\) point on the orbifold quotient \(S^2/A_5\).

\begin{lemma}
\label{lem:partI-p2-critical}
The point \(p_2\) is a critical point of \(P|_{S^2}\).
\end{lemma}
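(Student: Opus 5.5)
The cleanest route is a symmetry argument: \(p_2\) is fixed by a nontrivial rotation in \(A_5\), and the gradient of an invariant function at such a point must lie in the fixed line of the rotation. The argument depends on two facts: that \(P\) is really invariant under the relevant symmetry, and that \(p_2\) is exactly an edge midpoint of the icosahedron whose symmetry group preserves \(P\).

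\textbf{Step 1: identify the rotation.} Let \(R\) be the order-\(2\) rotation about the axis through \(p_2\). Concretely, \(R(v)=2\langle v,p_2\rangle p_2-v\); note that \(|p_2|^2=\tfrac14(\tau^{-2}+1+\tau^2)=\tfrac14\cdot 4=1\) because \(\tau^2+\tau^{-2}=3\). I need \(R\) to lie in the copy of \(A_5\) under which \(P\) is invariant. That group is generated by the sign changes in even number, the cyclic permutation \((x,y,z)\mapsto(y,z,x)\), and one further element.

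\textbf{Step 2: avoid the full group description.} Rather than reconstructing the whole group, I would verify directly that \(P\circ R=P\). This is the one genuinely computational step. I expect it to be the main obstacle, since it is a polynomial identity in \(\tau\) that uses \(\tau^2=\tau+1\). To shorten it, I would use the factored form of \(P\): check that \(R\) permutes the six planes \(\tau x=\pm y\), \(\tau y=\pm z\), \(\tau z=\pm x\) (the zero set of \(P\)), up to the appropriate scalars. It then suffices to test each linear form \(\tau x\pm y\), etc., composed with \(R\). Each such composition equals \(\pm\) another of the forms. This holds because \(R\) is a symmetry of the icosahedron with vertices at the cyclic permutations of \((0,\pm1,\pm\tau)\), and these planes are the mirror-type planes of that configuration. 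Tracking the signs then gives \(P\circ R=P\), since the sign count is even and the product is homogeneous.

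\textbf{Step 3: conclude criticality.} Since \(P\circ R=P\) and \(R\) is an isometry of \(S^2\) fixing \(p_2\), differentiating gives
\[
dR_{p_2}\,\nabla(P|_{S^2})(p_2)=\nabla(P|_{S^2})(p_2).
\]
The differential \(dR_{p_2}\) acts on \(T_{p_2}S^2=p_2^{\perp}\) as \(-\Id\), because \(R\) is the rotation by \(\pi\) about \(p_2\). Hence the gradient equals its own negative, so it vanishes, and \(p_2\) is a critical point.

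As a fallback if the symmetry check becomes messy, I would compute \(\nabla P(p_2)\) directly and show it is parallel to \(p_2\). The partials are sums of products of the three factors evaluated at \(p_2\):
\[
\tau^2x^2-y^2=\tfrac14-\tfrac14=0,\qquad
\tau^2y^2-z^2=\tfrac{\tau^2}{4}-\tfrac{\tau^2}{4}=0.
\]
Since two of the three factors vanish at \(p_2\), every first partial of \(P\) at \(p_2\) is a product containing a vanishing factor. Therefore \(\nabla P(p_2)=0\) outright, which already gives criticality of the restriction. This direct computation is in fact shorter, and I would present it as the main proof, with the symmetry argument as the conceptual explanation.
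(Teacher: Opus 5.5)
Your direct computation, which you say you would present as the main proof, is exactly the paper's argument: with \(A=\tau^2x^2-y^2\) and \(B=\tau^2y^2-z^2\) one has \(A(p_2)=B(p_2)=0\), so every term of \(\nabla P=(\nabla A)BC+A(\nabla B)C+AB(\nabla C)\) vanishes at \(p_2\), giving \(\nabla P(p_2)=0\). One correction to your symmetry remark: \(P\) is the product of the six linear forms orthogonal to the five-fold axes of the icosahedron with vertices at the cyclic permutations of \((0,\pm\tau,\pm1)\), so its zero planes are not mirror planes, and it is for this icosahedron, not the one with vertices \((0,\pm1,\pm\tau)\), that \(p_2\) is an edge midpoint and the half-turn about \(p_2\) preserves \(P\).
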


\begin{proof}
Write
\[
A=\tau^2x^2-y^2,\qquad
B=\tau^2y^2-z^2,\qquad
C=\tau^2z^2-x^2,
\]
so that \(P=ABC\).

At
\[
p_2=\left(\frac{1}{2\tau},\frac12,\frac{\tau}{2}\right),
\]
one has
\[
x^2=\frac{1}{4\tau^2},\qquad
y^2=\frac14,\qquad
z^2=\frac{\tau^2}{4},
\]
hence
\[
A(p_2)=\tau^2\frac{1}{4\tau^2}-\frac14=0,
\qquad
B(p_2)=\tau^2\frac14-\frac{\tau^2}{4}=0.
\]
Therefore
\[
\nabla P
=
(\nabla A)BC+A(\nabla B)C+AB(\nabla C)
\]
vanishes at \(p_2\). Thus \(p_2\) is critical for \(P|_{S^2}\).
\end{proof}

\begin{lemma}
\label{lem:partI-p2-hessian}
The constrained Hessian of \(P|_{S^2}\) at \(p_2\) is nondegenerate and
indefinite.
\end{lemma}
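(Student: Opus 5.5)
The plan is to compute the constrained Hessian directly from the factorization \(P=ABC\) of \Cref{lem:partI-p2-critical}. The key fact is that two of the three factors vanish at \(p_2\), so the Hessian collapses to a rank-two form.

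First, I would record that \(p_2\in S^2\): indeed \(|p_2|^2=\tfrac14(\tau^{-2}+1+\tau^2)=1\), since \(\tau^2+\tau^{-2}=3\).

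Next, recall the formula for the Hessian of a function restricted to the unit sphere. At a point \(p\) with tangent vector \(\xi\perp p\),
\[
\Hess(P|_{S^2})_p(\xi,\xi)=D^2P_p(\xi,\xi)-\bigl(\nabla P(p)\cdot p\bigr)|\xi|^2 .
\]
By \Cref{lem:partI-p2-critical} the full Euclidean gradient \(\nabla P(p_2)\) vanishes, so the Lagrange term drops out. The constrained Hessian is therefore just the Euclidean Hessian restricted to \(T=T_{p_2}S^2=p_2^\perp\).

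Then I would expand \(D^2(ABC)\) by the product rule. Every term that still contains a factor \(A\) or \(B\) vanishes at \(p_2\). The only survivors are the terms in which \(A\) and \(B\) are each differentiated once, which gives
\[
D^2P_{p_2}(\xi,\xi)=2\,C(p_2)\,(\nabla A(p_2)\cdot\xi)(\nabla B(p_2)\cdot\xi).
\]
The ingredients are explicit:
\[
C(p_2)=\tfrac14(\tau^4-\tau^{-2})\neq0 \quad(\text{since }\tau>1),\qquad
a=\nabla A(p_2)=(\tau,-1,0),\qquad
b=\nabla B(p_2)=(0,\tau^2,-\tau).
\]

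The main step is to show that the linear forms \(\xi\mapsto a\cdot\xi\) and \(\xi\mapsto b\cdot\xi\) are linearly independent on \(T\). This is equivalent to \(p_2,a,b\) being linearly independent in \(\R^3\). The determinant with rows \(p_2,a,b\) expands to
\[
\tfrac12+\tfrac{\tau^2}{2}+\tfrac{\tau^4}{2}>0,
\]
which I would double-check, but it is manifestly positive.

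Once independence holds, \(s=a\cdot\xi\) and \(t=b\cdot\xi\) are linear coordinates on \(T\). In these coordinates the Hessian is the quadratic form \(2C(p_2)\,st\). This form is nondegenerate with signature \((1,1)\), so the constrained Hessian is nondegenerate and indefinite, i.e. \(p_2\) is a saddle.

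Finally, by \Cref{lem:partI-harmonic-correction}, the harmonic correction \(c_\tau(x^2+y^2+z^2)^3\) is constant on \(S^2\). Hence the same conclusion holds for \(\widetilde P|_{S^2}\), and therefore for the \(A_5\)-invariant spherical harmonic.

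I expect the only step needing care to be the bookkeeping of the product-rule expansion and the determinant; there is no conceptual obstacle.
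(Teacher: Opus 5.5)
Your proof is correct and follows the same route as the paper. The vanishing gradient removes the Lagrange term, and the product rule reduces the Hessian to \(2C(p_2)(a\cdot\xi)(b\cdot\xi)\) (note \(C(p_2)=\tfrac14(\tau^4-\tau^{-2})=\tau\)); it then remains to show the two linear forms are independent on \(T_{p_2}S^2\). Your determinant \(\tfrac12(1+\tau^2+\tau^4)\neq0\) is correct and does this a bit more tidily than the paper's check that \(\nabla A(p_2)-\lambda\nabla B(p_2)\notin\R p_2\), because it also covers the case \(\nabla B(p_2)\in\R p_2\) automatically.
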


\begin{proof}
Since \(\nabla P(p_2)=0\), the spherical Hessian is the restriction of the
ambient Hessian \(\Hess P(p_2)\) to the tangent plane \(T_{p_2}S^2\).

Again write \(P=ABC\). At \(p_2\) we have
\[
A(p_2)=0,\qquad B(p_2)=0.
\]
A direct second differentiation shows that the only surviving term is
\[
\Hess P(p_2)
=
C(p_2)\bigl(\nabla A\otimes \nabla B+\nabla B\otimes \nabla A\bigr).
\]
Now
\[
\nabla A=(2\tau^2x,-2y,0),
\qquad
\nabla B=(0,2\tau^2y,-2z).
\]
Evaluating at \(p_2\) gives
\[
\nabla A(p_2)=(\tau,-1,0),
\qquad
\nabla B(p_2)=(0,\tau^2,-\tau),
\qquad
C(p_2)=\tau.
\]
Hence for \(v\in T_{p_2}S^2\),
\[
v^T\Hess P(p_2)v
=
2\tau\,(\nabla A(p_2)\cdot v)(\nabla B(p_2)\cdot v).
\]

Thus the restricted quadratic form is the product of two linear forms on the
two-dimensional tangent plane. It remains to show that these two linear forms
are not proportional on \(T_{p_2}S^2\). Suppose, to the contrary, that for some
\(\lambda,\mu\in\R\),
\[
\nabla A(p_2)-\lambda \nabla B(p_2)=\mu p_2.
\]
In coordinates this becomes
\[
(\tau,-1,0)-\lambda(0,\tau^2,-\tau)
=
\mu\left(\frac{1}{2\tau},\frac12,\frac{\tau}{2}\right).
\]
The first and third coordinates imply
\[
\mu=2\tau^2,
\qquad
\lambda=\tau^2.
\]
Substituting into the second coordinate yields
\[
-1-\tau^4=\tau^2,
\]
which is false. Thus the two restricted linear forms are not proportional.

Therefore the quadratic form
\[
v\longmapsto 2\tau\,(\nabla A(p_2)\cdot v)(\nabla B(p_2)\cdot v)
\]
has rank \(2\) and signature \((1,1)\) on \(T_{p_2}S^2\). Hence the constrained
Hessian is nondegenerate and indefinite.
\end{proof}

Let \(C_2\subset M\) denote an order-\(2\) exceptional Hopf fiber.

\begin{corollary}
\label{cor:partI-C2-morse-bott}
The function \(F_0\) is Morse--Bott along \(C_2\), and its transverse Hessian
along \(C_2\) is nondegenerate of signature \((1,1)\).
\end{corollary}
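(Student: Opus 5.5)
The plan is to transport the information from Lemmas~\ref{lem:partI-p2-critical} and \ref{lem:partI-p2-hessian} along the Hopf fibration. By construction, \(F_0\) is the Hopf lift of the invariant harmonic. Write \(\pi:S^3\to S^2\) for the Hopf map and \(\bar\pi:M\to S^2/A_5\) for the induced Seifert fibration. Then \(F_0=h\circ\pi\) on \(S^3\), up to a nonzero scalar and an additive constant, where \(h=\widetilde P|_{S^2}\). Lemma~\ref{lem:partI-harmonic-correction} says that \(h\) and \(P|_{S^2}\) differ by a constant, so they have the same critical points and the same Hessians. The function \(F_0\) is invariant under \(I^*\), so it descends to \(M\). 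The order-\(2\) exceptional fiber \(C_2\) is the image in \(M\) of the great circle \(\pi^{-1}(p_2)\); we may choose \(C_2\) this way because the order-\(2\) fibers are permuted by the symmetry and \(p_2\) is an order-\(2\) point.

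First I will work upstairs on \(S^3\). Since \(\pi\) is a Riemannian submersion onto \(S^2\) of radius \(1/2\), we have \(d(h\circ\pi)=dh\circ d\pi\). Hence \(\pi^{-1}(p_2)\) lies in the critical set, by Lemma~\ref{lem:partI-p2-critical}. At a point \(x\in\pi^{-1}(p_2)\) the Hessian of \(h\circ\pi\) is computed as follows. Because \(dh(p_2)=0\), the second fundamental form term of the submersion drops out, and
\[
\Hess(h\circ\pi)(x)(v,w)=\Hess h(p_2)\bigl(d\pi_x v,d\pi_x w\bigr).
\]
This makes the Hessian vanish on the vertical direction \(T_x\pi^{-1}(p_2)\). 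On the horizontal plane, \(d\pi_x\) is a linear isomorphism onto \(T_{p_2}S^2\) (a conformal one). So the Hessian there is congruent to \(\Hess h(p_2)\). By Lemma~\ref{lem:partI-p2-hessian}, that form is nondegenerate of signature \((1,1)\). Sylvester's law of inertia preserves the signature under congruence. Hence the kernel of the Hessian is exactly the tangent line to the fiber, and the normal Hessian has signature \((1,1)\).

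Next I descend to \(M\). The covering \(S^3\to M\) is a local isometry and \(F_0\) is the pullback of its descended function. Therefore the Morse--Bott property and the transverse Hessian signature at points of \(C_2\) are the same as upstairs. One also needs \(C_2\) to be an embedded circle that is a connected component of the critical set near itself. This follows from nondegeneracy in the normal directions. The normal nondegeneracy implies that the critical set near \(C_2\) is a submanifold whose tangent space equals the Hessian kernel, which is one-dimensional. The critical set is therefore locally exactly the circle.

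The main obstacle is the Hessian-of-a-composition step. One must make sure the vertical direction contributes nothing and that no mixed terms appear. Both facts follow from the vanishing of \(dh(p_2)\), and from \(h\circ\pi\) being constant along fibers, so its derivatives in fiber directions vanish identically near the fiber. A secondary point is to confirm the precise normalization of the Hopf lift: a positive or negative scalar multiple could flip the signature. Since \((1,1)\) is symmetric under negation, this causes no difficulty.
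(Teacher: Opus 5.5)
Your proposal is correct and follows essentially the same route as the paper. Both write \(F_0\), up to scale and an additive constant, as the Hopf lift of the harmonically corrected sextic via \Cref{lem:partI-harmonic-correction}, note that the fiber direction lies in the Hessian kernel, and identify the transverse Hessian along \(C_2\) with the constrained Hessian at \(p_2\) computed in \Cref{lem:partI-p2-hessian}. What you add is detail the paper leaves implicit: the chain-rule formula for the Hessian at a critical point, the congruence argument on the horizontal plane, descent through the local isometry \(S^3\to M\), and the local isolation of \(C_2\) in the critical set. Like the paper, you take the identification of \(F_0\) with this Hopf lift as given.
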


\begin{proof}
By \Cref{lem:partI-harmonic-correction}, the restriction \(P|_{S^2}\)
represents the unique \(A_5\)-invariant degree-\(6\) spherical harmonic up to
addition of a constant. Hence its Hopf lift spans the same line as \(F_0\).

Along a Hopf fiber the lifted function is constant in the fiber direction, so
the fiber tangent lies in the Hessian kernel. The transverse Hessian is exactly
the Hessian of the descended function at the corresponding orbifold point.
By \Cref{lem:partI-p2-hessian}, that transverse Hessian is nondegenerate and
indefinite. Therefore \(F_0\) is Morse--Bott along \(C_2\).
\end{proof}

\section{The full critical-circle structure of the invariant seed}
\label{sec:partI-full-critical-circle-structure}

We now complete the analysis of the distinguished invariant eigenfunction
\[
F_0\in E_{168}(g_0),
\]
showing that its critical set consists exactly of the three exceptional Hopf
fibers and that it is Morse--Bott along each of them.

Recall the explicit \(A_5\)-invariant sextic
\[
P(x,y,z)
=
(\tau^2x^2-y^2)(\tau^2y^2-z^2)(\tau^2z^2-x^2),
\qquad
\tau=\frac{1+\sqrt5}{2},
\]
whose restriction to \(S^2\) differs by a constant from the unique
\(A_5\)-invariant spherical harmonic of degree \(6\); see
\Cref{lem:partI-harmonic-correction}. Therefore \(P|_{S^2}\) has the same
critical set and the same constrained Hessians as the descended seed function on
\(S^2/A_5\), and its Hopf lift has the same critical-circle structure as \(F_0\).

\begin{lemma}
\label{lem:partI-critical-set-F0}
The critical set of \(F_0\) is exactly
\[
\Crit(F_0)=C_5\sqcup C_3\sqcup C_2,
\]
where \(C_5,C_3,C_2\) are the order-\(5\), order-\(3\), and order-\(2\)
exceptional Hopf fibers in \(M=S^3/I^*\).
\end{lemma}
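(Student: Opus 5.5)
Since \(F_0\) is the Hopf lift of a function on the orbifold \(S^2/A_5\), I will reduce the statement to one about critical points of the descended sextic. Write \(h:S^3\to S^2\) for the Hopf map, which is \(I^*\)-equivariant with respect to \(I^*\to SO(3)\), image \(A_5\). The lift \(\widetilde F_0=f\circ h\), with \(f=\widetilde P|_{S^2}\), is a Riemannian submersion composite. Hence \(d\widetilde F_0=df\circ dh\), and since \(dh\) is surjective at every point, the critical set of \(\widetilde F_0\) is exactly \(h^{-1}(\Crit f)\). The critical set of \(F_0\) on \(M\) is the image of this union of Hopf circles in \(M\). So everything reduces to the claim
\[
\Crit\bigl(P|_{S^2}\bigr)=A_5\cdot p_5\;\sqcup\;A_5\cdot p_3\;\sqcup\;A_5\cdot p_2,
\]
meaning the \(12\) vertices, \(20\) face centers and \(30\) edge midpoints of the icosahedron. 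Here I use that the critical sets of \(P|_{S^2}\) and of the harmonic \(\widetilde P|_{S^2}\) coincide by \Cref{lem:partI-harmonic-correction}. Each \(A_5\)-orbit of points with stabilizer of order \(k\) lifts under \(h\) to one \(I^*\)-orbit of circles, which descends to the single exceptional fiber \(C_k\).

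For the inclusion \(\supseteq\) I will use a symmetry argument. At a point \(p\) with nontrivial rotation stabilizer \(\langle r\rangle\), the gradient of an invariant function is fixed by \(dr\) on \(T_pS^2\). A nontrivial rotation about the axis through \(p\) has no nonzero fixed tangent vectors, so \(\nabla f(p)=0\). This is consistent with \Cref{lem:partI-p2-critical}, which covers the order-\(2\) points explicitly.

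For the inclusion \(\subseteq\), I need a count that leaves no room for other critical points. Using \Cref{lem:partI-p2-hessian}, the \(30\) edge points are nondegenerate saddles. I will then check, by analogous explicit computations at \(p_5=(0,1,\tau)/\sqrt{1+\tau^2}\) and \(p_3=(1,1,1)/\sqrt3\), that the vertices and face centers are nondegenerate extrema of opposite type. For these rotation-invariant points the Hessian is a multiple of the identity, because the stabilizer acts irreducibly on \(T_p\); so only the sign of a single second derivative needs to be computed. The Euler characteristic then gives \(12+20-30=2=\chi(S^2)\).

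The main obstacle is excluding additional critical points lying on generic orbits of size \(60\), or on mirror great circles, since Euler characteristic alone only constrains the signed count. My first approach is a Morse-type count on the fundamental triangle \(T\) with vertices \(p_5,p_3,p_2\), using the signs of \(f\) on the edges. Every extra nondegenerate critical point of index \(1\) must be balanced by one of index \(0\) or \(2\). I would therefore prove directly that \(f\) is strictly monotone along each edge of \(T\). These edges are arcs of the reflection planes of the full icosahedral group, and \(P\) is in fact invariant under the full group \(A_5\times\{\pm1\}\) together with the sign changes of coordinates. I would then show that \(f\) has no interior critical point in \(T\), for instance by restricting to the one-parameter families of great circles through \(p_5\) and checking monotonicity in the angle. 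A cleaner alternative is to compute \(\nabla f\wedge x\) as a polynomial system in \(x,y,z\) and verify by resultants that its common zeros with \(|x|=1\) are exactly the \(62\) listed points. I expect this exclusion step, and not the lifting or the symmetry argument, to carry the real computational weight.
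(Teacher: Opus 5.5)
\textbf{There is a genuine gap: the inclusion $\subseteq$ is never proved.} The rest of your set-up is sound. This includes the reduction to $S^2$ via $d(f\circ h)=df\circ dh$, the identification of each special $A_5$-orbit with a single exceptional fiber, and the stabilizer argument for $\supseteq$, which is a cleaner way than reading the points off a computation.

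The content of the lemma is the reverse inclusion $\subseteq$, and there your proposal only offers a menu of strategies. Nondegeneracy at the $62$ special points together with $12-30+20=2$ cannot rule out further critical points on generic orbits or on mirror arcs. Such points could come in cancelling pairs, or be degenerate. You acknowledge this, but none of the three checks is carried out: monotonicity along the edges of $T$, absence of interior critical points in $T$, or the resultant computation. Radial monotonicity from $p_5$ in particular is only a sufficient condition you would have to verify by computation; nothing established so far implies it. As written, the argument stops exactly at its only nontrivial step.

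A smaller slip: your $p_5=(0,1,\tau)/\sqrt{1+\tau^2}$ is not a critical point of this $P$. $P$ is only cyclically symmetric, so its vertices are the cyclic permutations of $(\pm\tau,\pm1,0)/\sqrt{1+\tau^2}$, for example $(0,\tau,1)/\sqrt{1+\tau^2}$. At $(0,1,\tau)$ one has $\tau^2y^2-z^2=0$, and $\nabla P$ is proportional to $(0,\tau,-1)$, which is tangent to the sphere. Your planned sign check must therefore be done at a genuine vertex.

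\textbf{How the paper closes the gap.} The paper exploits the sign-change symmetry by writing $P(x,y,z)=p(x^2,y^2,z^2)$ with $p(a,b,c)=(\tau^2a-b)(\tau^2b-c)(\tau^2c-a)$. The Lagrange system then becomes $x(p_a-\lambda)=y(p_b-\lambda)=z(p_c-\lambda)=0$, and one splits according to which coordinates vanish.
\begin{itemize}
\item If $xyz\neq0$, one needs $p_a=p_b=p_c$ on $a+b+c=1$. This is an intersection of two conics in the $(a,b,c)$-plane. Elimination gives $(3c-1)(4c-1)(16c^2-12c+1)=0$, so the only solutions are $(\tfrac13,\tfrac13,\tfrac13)$ and the cyclic permutations of $(\tfrac1{4\tau^2},\tfrac14,\tfrac{\tau^2}{4})$, giving $32$ points.
\item If exactly one coordinate vanishes, the condition is the single quadratic $(30+12\sqrt5)a^2-(26+10\sqrt5)a+(3+\sqrt5)=0$. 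Its two roots give $24$ points.
\item If two coordinates vanish, only the $6$ axis points remain.
\end{itemize}
This is your ``resultant'' alternative, reduced by the substitution to a hand computation. It yields both inclusions at once, with no Hessian or Euler-characteristic input needed.
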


\begin{proof}
It is enough to determine the critical set of \(P|_{S^2}\).

Set
\[
a=x^2,\qquad b=y^2,\qquad c=z^2,
\qquad a,b,c\ge 0,\qquad a+b+c=1,
\]
and define
\[
p(a,b,c)
=
(\tau^2 a-b)(\tau^2 b-c)(\tau^2 c-a),
\]
so that
\[
P(x,y,z)=p(x^2,y^2,z^2).
\]
Then
\[
\partial_xP=2x\,p_a,\qquad
\partial_yP=2y\,p_b,\qquad
\partial_zP=2z\,p_c.
\]

The constrained critical-point equations for \(P|_{S^2}\) are
\[
\nabla P=2\lambda(x,y,z),
\]
equivalently
\[
x(p_a-\lambda)=0,\qquad
y(p_b-\lambda)=0,\qquad
z(p_c-\lambda)=0.
\]
We classify the solutions according to the vanishing pattern of \(x,y,z\).

\medskip

\noindent\textbf{Case 1: \(xyz\neq0\).}
Then
\[
p_a=p_b=p_c,\qquad a+b+c=1.
\]
Eliminating \(a\) and \(b\) yields the quartic equation
\[
c^4-\frac43c^3+\frac7{12}c^2-\frac{19}{192}c+\frac1{192}=0.
\]
This factors as
\[
\frac1{192}(3c-1)(4c-1)(16c^2-12c+1)=0.
\]
Hence
\[
c\in\left\{\frac13,\frac14,\frac{3-\sqrt5}{8},\frac{3+\sqrt5}{8}\right\}.
\]
Solving back together with the symmetry of the equations shows that the only
solutions in this case are
\[
\left(\frac13,\frac13,\frac13\right),
\]
and the three cyclic permutations of
\[
\left(\frac1{4\tau^2},\frac14,\frac{\tau^2}{4}\right).
\]
Thus Case~1 contributes the \(8\) points
\[
\frac1{\sqrt3}(\pm1,\pm1,\pm1),
\]
and the \(24\) points obtained from sign choices and cyclic coordinate
permutations of
\[
\left(\frac1{2\tau},\frac12,\frac{\tau}{2}\right).
\]

\medskip

\noindent\textbf{Case 2: exactly one coordinate is zero.}
By symmetry, take \(z=0\) and \(x,y\neq0\). Then
\[
a+b=1,\qquad p_a=p_b.
\]
A direct substitution of \(c=0\), \(b=1-a\) into \(p_a-p_b=0\) reduces the
equation to
\[
(30+12\sqrt5)a^2-(26+10\sqrt5)a+(3+\sqrt5)=0,
\]
whose two roots are
\[
a=\frac12+\frac{\sqrt5}{10},
\qquad
a=\frac12-\frac{\sqrt5}{6}.
\]
Equivalently, the two solution pairs \((a,b)\) are
\[
\left(\frac{\tau^2}{2+\tau},\frac1{2+\tau}\right)
\]
and
\[
\left(\frac1{3\tau^2},\frac{\tau^2}{3}\right).
\]
Restoring signs and coordinate permutations gives:

\begin{itemize}
\item the \(12\) points
\[
\frac1{\sqrt{1+\tau^2}}(\pm\tau,\pm1,0)
\]
and permutations;

\item the \(12\) points
\[
\frac1{\sqrt3}(0,\pm\tau^{-1},\pm\tau)
\]
and permutations.
\end{itemize}

\medskip

\noindent\textbf{Case 3: exactly two coordinates are zero.}
Then the sphere equation forces one coordinate to be \(\pm1\). Thus we obtain
exactly the \(6\) axis points
\[
(\pm1,0,0),\qquad (0,\pm1,0),\qquad (0,0,\pm1).
\]

\medskip

Collecting the three cases, the critical set of \(P|_{S^2}\) consists exactly of:

\begin{enumerate}[label=\arabic*.]
\item the \(12\) points
\[
\frac1{\sqrt{1+\tau^2}}(\pm\tau,\pm1,0)
\]
and permutations;

\item the \(20\) points given by the \(8\) points
\[
\frac1{\sqrt3}(\pm1,\pm1,\pm1),
\]
together with the \(12\) points
\[
\frac1{\sqrt3}(0,\pm\tau^{-1},\pm\tau)
\]
and permutations;

\item the \(30\) points given by the \(6\) axis points and the \(24\) points
obtained from
\[
\left(\frac1{2\tau},\frac12,\frac{\tau}{2}\right)
\]
by sign changes and cyclic coordinate permutations.
\end{enumerate}

These are precisely the three \(A_5\)-orbits corresponding to the vertices,
face centers, and edge midpoints of the icosahedron. Therefore the descended
function on \(S^2/A_5\) has exactly three critical points, namely the orbifold
points of orders \(5,3,2\). After Hopf lift to \(M=S^3/I^*\), these become
exactly the three critical circles
\[
C_5,\qquad C_3,\qquad C_2.
\]
Hence
\[
\Crit(F_0)=C_5\sqcup C_3\sqcup C_2.
\]
\end{proof}

\begin{lemma}
\label{lem:partI-F0-morse-bott-all}
The function \(F_0\) is Morse--Bott along each of the circles
\[
C_5,\qquad C_3,\qquad C_2,
\]
and its transverse Hessian is nondegenerate along each one.
\end{lemma}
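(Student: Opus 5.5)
The argument follows \Cref{cor:partI-C2-morse-bott}. First reduce to the sphere, then check nondegeneracy at one representative of each exceptional orbit.

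\emph{Reduction to \(S^2\).} As in \Cref{cor:partI-C2-morse-bott}, \(F_0\) is (up to scale and an additive constant) the pullback of \(P|_{S^2}\) under the Hopf map \(S^3\to S^2\), descended to \(M\). Along a critical fiber, the fiber tangent lies in the kernel of \(\Hess F_0\). Because the Hopf map is a Riemannian submersion, the Hessian restricted to the horizontal plane equals the Hessian of \(P|_{S^2}\) at the image point. So \(F_0\) is Morse--Bott along \(C_k\) exactly when the constrained Hessian of \(P|_{S^2}\) is nondegenerate at one point of the corresponding \(A_5\)-orbit. \Cref{lem:partI-critical-set-F0} identifies these orbits as the vertices, the face centres and the edge midpoints. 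Since \(P\) is \(A_5\)-invariant, one representative per orbit suffices. The order-\(2\) case is \Cref{lem:partI-p2-hessian}, so only the vertex and face-centre orbits remain.

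\emph{Order \(5\) and order \(3\) by symmetry.} Take the vertex \(v=(\tau,1,0)/\sqrt{1+\tau^2}\) and the face centre \(w=(1,1,1)/\sqrt3\). The stabilizer of \(v\) in \(A_5\) is cyclic of order \(5\), rotating \(T_vS^2\) by \(2\pi/5\). The stabilizer of \(w\) is cyclic of order \(3\), rotating \(T_wS^2\) by \(2\pi/3\). The constrained Hessian is a quadratic form on \(T S^2\) invariant under this rotation. A rotation of order \(\ge3\) leaves invariant only the multiples of the Euclidean form, since the traceless part transforms with weight \(e^{\pm 2i\theta}\) and \(2\theta\not\equiv0\). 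Hence \(\Hess(P|_{S^2})(v)=\alpha\,\Id\) and \(\Hess(P|_{S^2})(w)=\beta\,\Id\). It remains to show \(\alpha\neq0\) and \(\beta\neq0\).

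\emph{Computing the scalars.} By invariance, \(\alpha\) and \(\beta\) are half the trace of the constrained Hessian. For a homogeneous degree-\(6\) polynomial at a critical point \(p\) of the restriction,
\[
\operatorname{tr}\Hess_{S^2}(P|_{S^2})(p)=\Delta_{\R^3}P(p)-\Hess P(p)(p,p)-2\,\partial_rP(p)=\Delta_{\R^3}P(p)-30P(p)-12P(p).
\]
It is cleaner to use \(\widetilde P\) from \Cref{lem:partI-harmonic-correction}, which has the same spherical Hessian because the added term is constant on \(S^2\). Since \(\widetilde P\) is harmonic and the spherical Laplacian of a degree-\(6\) harmonic is \(-42\widetilde P\), the trace at \(p\) equals \(-42\widetilde P(p)\). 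So nondegeneracy at \(v\) and \(w\) amounts to \(\widetilde P(v)\neq0\) and \(\widetilde P(w)\neq0\), that is, \(P(v)\neq -c_\tau\) and \(P(w)\neq -c_\tau\).

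The main obstacle is this last step: it needs \(c_\tau\) (equivalently \(\kappa_\tau\)) explicitly. I would compute \(\Delta_{\R^3}P\) at a single convenient point, for example \((1,0,0)\), to get \(\kappa_\tau\). Then I would evaluate \(P(v)\) and \(P(w)\) in \(\mathbb Q(\sqrt5)\). For instance \(P(w)=\tfrac1{27}(\tau^2-1)^3=\tau^3/27\), and \(P(v)\) comes out similarly.

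As a fallback that avoids computing \(c_\tau\), I would argue that \(\widetilde P(v)\) and \(\widetilde P(w)\) cannot vanish. If \(\widetilde P\) vanished at a critical point with zero Hessian, the critical point would be degenerate of order \(\ge3\). Since \(\widetilde P\) is an eigenfunction, its nodal set near such a point would consist of \(\ge3\) arcs meeting at equal angles, with a \(\mathbb Z_5\)- (or \(\mathbb Z_3\)-)symmetric configuration. One could then aim for a contradiction with the nodal domain count via Courant's theorem, or with the known value \(\widetilde P(e_1)\) together with the ordering of critical values. I expect the direct computation to be shorter, so I would try it first.
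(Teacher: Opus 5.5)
Your argument is correct, and it takes a genuinely different route from the paper.

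The paper works by brute force. At each of the three representatives it writes $\nabla P(p)=2\lambda p$, computes $\lambda$ explicitly, and evaluates $(\Hess P(p)-2\lambda I)|_{T_pS^2}$ in an explicit tangent basis. This gives concrete $2\times2$ matrices: positive definite at the order-$5$ point, negative definite at the order-$3$ point, indefinite at the order-$2$ point.

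You instead use the cyclic stabilizers of orders $5$ and $3$ to force the spherical Hessian to be scalar. Harmonicity of $\widetilde P$ then identifies that scalar as $-21\,\widetilde P(p)$, so the question reduces to one constant and two function values.

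The step you flagged as the obstacle closes immediately:
\begin{itemize}
\item $\kappa_\tau=\Delta_{\R^3}P(1,0,0)=2\tau^2-2\tau^4=-2\tau^3$, so $c_\tau=\tau^3/21$.
\item $P(v)=-\tau^5/(1+\tau^2)^2=-\tau^3/5$ and $P(w)=\tau^3/27$.
\item Hence $\widetilde P(v)=-\tfrac{16}{105}\tau^3$ and $\widetilde P(w)=\tfrac{16}{189}\tau^3$.
\item The Hessians are therefore $\tfrac{16}{5}\tau^3\,\Id$ and $-\tfrac{16}{9}\tau^3\,\Id$ in orthonormal frames.
\end{itemize}
These agree with the paper's matrices once its bases are normalized, using $|u_5|^2=\tau+2$, $|u_3|^2=2$, $|v_3|^2=6$ and $\tau^3=2+\sqrt5$. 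So your Courant-type fallback is unnecessary, and would be much harder to make rigorous anyway.

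What your route buys:
\begin{itemize}
\item It is shorter and isolates the only genuinely computational input.
\item The sign of $\widetilde P(p)$ fixes the index, which explains why the order-$5$ and order-$3$ points are a minimum and a maximum.
\item The same symmetry shows, for free, that $v$ and $w$ are critical, without invoking \Cref{lem:partI-critical-set-F0}.
\end{itemize}
Its limitation is that rotation by $\pi$ imposes no constraint on a quadratic form. So it cannot handle the order-$2$ orbit, where you correctly fall back on \Cref{lem:partI-p2-hessian}. The paper's uniform computation treats all three orbits the same way, but it asserts its matrix entries as a direct computation rather than displaying them.
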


\begin{proof}
Again it is enough to work on \(S^2\) with \(P|_{S^2}\). Let \(p\in S^2\) be a
critical point of \(P|_{S^2}\), so that
\[
\nabla P(p)=2\lambda p.
\]
Then the Hessian of the restricted function \(P|_{S^2}\) at \(p\) is
\[
\bigl(\Hess_{\R^3}P(p)-2\lambda I\bigr)\big|_{T_pS^2}.
\]
We evaluate this at one representative of each critical orbit.

\medskip

\noindent\textbf{The order-\(5\) representative.}
Take
\[
p_5=\frac1{\sqrt{1+\tau^2}}(\tau,1,0).
\]
A direct computation gives
\[
\nabla P(p_5)=2\lambda_5 p_5,\qquad
\lambda_5=-\frac{6+3\sqrt5}{5}.
\]
Using the tangent basis
\[
u_5=(1,-\tau,0),\qquad v_5=(0,0,1),
\]
the restricted Hessian matrix is
\[
\begin{pmatrix}
24+\frac{56\sqrt5}{5} & 0\\[1ex]
0 & \frac{32+16\sqrt5}{5}
\end{pmatrix}.
\]
Both eigenvalues are positive. Hence the constrained Hessian is nondegenerate
and positive definite at \(p_5\).

\medskip

\noindent\textbf{The order-\(3\) representative.}
Take
\[
p_3=\frac1{\sqrt3}(1,1,1).
\]
A direct computation gives
\[
\nabla P(p_3)=2\lambda_3 p_3,\qquad
\lambda_3=\frac{2+\sqrt5}{9}.
\]
Using the tangent basis
\[
u_3=(1,-1,0),\qquad v_3=(1,1,-2),
\]
the restricted Hessian matrix is diagonal:
\[
\begin{pmatrix}
-\frac{64+32\sqrt5}{9} & 0\\[1ex]
0 & -\frac{64+32\sqrt5}{3}
\end{pmatrix}.
\]
Hence the constrained Hessian is nondegenerate and negative definite at
\(p_3\).

\medskip

\noindent\textbf{The order-\(2\) representative.}
Take
\[
p_2=\left(\frac1{2\tau},\frac12,\frac{\tau}{2}\right).
\]
Here
\[
\nabla P(p_2)=0,
\]
so the restricted Hessian is simply \(\Hess P(p_2)\) on the tangent plane.
Using the tangent basis
\[
u_2=\left(1,-\frac1\tau,0\right),\qquad
v_2=\left(1,0,-\frac1{\tau^2}\right),
\]
one obtains the matrix
\[
\begin{pmatrix}
-3\sqrt5-5 & -2\\
-2 & 1+\sqrt5
\end{pmatrix},
\]
whose determinant is
\[
(-3\sqrt5-5)(1+\sqrt5)-4=-24-8\sqrt5<0.
\]
Hence the constrained Hessian is nondegenerate and indefinite at \(p_2\).

\medskip

Thus the descended seed function on \(S^2/A_5\) has three nondegenerate
critical points, one at each orbifold point of orders \(5,3,2\). The Hopf lift
\(F_0\) is constant along the fiber direction, so at each corresponding
critical circle \(C_m\):

\begin{itemize}
\item the fiber tangent direction lies in the Hessian kernel;
\item the transverse Hessian is exactly the Hessian just computed on the base.
\end{itemize}

Therefore the Hessian kernel is exactly one-dimensional, generated by the fiber
direction, and the transverse Hessian is nondegenerate. Hence \(F_0\) is
Morse--Bott along each of \(C_5,C_3,C_2\).
\end{proof}

\begin{corollary}
\label{cor:partI-F0-morse-bott-all}
The invariant seed \(F_0\) has exactly three critical circles,
\[
\Crit(F_0)=C_5\sqcup C_3\sqcup C_2,
\]
and is Morse--Bott along each of them.
\end{corollary}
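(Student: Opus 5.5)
This corollary is essentially a repackaging of the two preceding lemmas, so the plan is simply to combine them. First, I invoke \Cref{lem:partI-critical-set-F0}, which identifies the critical set of \(F_0\) as the disjoint union \(C_5\sqcup C_3\sqcup C_2\). To justify calling these exactly three circles, I will note that each exceptional Hopf fiber in \(M=S^3/I^*\) is an embedded circle, namely the image of a single \(I^*\)-orbit of Hopf fibers. The reason is that the critical set of \(P|_{S^2}\) consists of exactly three \(A_5\)-orbits, and each \(A_5\)-orbit on \(S^2\) corresponds to a single Hopf fiber in the quotient. The three circles are distinct because the three orbits are distinct, as their stabilizer orders \(5,3,2\) differ.

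Second, I invoke \Cref{lem:partI-F0-morse-bott-all}. For each \(m\in\{5,3,2\}\), it gives a one-dimensional Hessian kernel along \(C_m\), spanned by the fiber tangent, together with a nondegenerate transverse Hessian. Since \(C_m\) is a connected one-dimensional submanifold whose tangent space equals the Hessian kernel at every point, this is exactly the Morse--Bott condition.

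One point deserves attention: the lemmas were proved for the Hopf lift of \(P|_{S^2}\) rather than for \(F_0\) itself. By \Cref{lem:partI-harmonic-correction}, \(F_0\) is a nonzero multiple of the lift of \(P|_{S^2}\) plus a constant. Adding a constant does not change critical points or Hessians, and multiplying by a nonzero scalar changes neither the critical set nor the nondegeneracy of the transverse Hessian (it at most reverses the signature). Hence both conclusions transfer to \(F_0\).

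I expect no real obstacle here. The only care needed is this transfer from \(P\) to \(F_0\) via scaling and constants, together with confirming that the Hopf lift descends correctly through the \(I^*\)-quotient so that each orbifold point of \(S^2/A_5\) gives exactly one circle in \(M\).
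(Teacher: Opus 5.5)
Your proposal is correct and matches the paper's proof, which likewise just combines \Cref{lem:partI-critical-set-F0} and \Cref{lem:partI-F0-morse-bott-all}. Your extra remarks make explicit something the paper only states briefly at the start of that section. Namely, adding a constant or multiplying by a nonzero scalar does not affect the critical set or transverse nondegeneracy, so the conclusions for the Hopf lift of \(P|_{S^2}\) transfer to \(F_0\).
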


\begin{proof}
Combine \Cref{lem:partI-critical-set-F0} and
\Cref{lem:partI-F0-morse-bott-all}.
\end{proof}

\section{A genuine first-eigenspace splitting direction along \(C_2\)}
\label{sec:partI-splitting-direction}

We now replace the informal use of the raw polynomial \(I_{12}(\alpha,\beta)\)
by the corresponding coefficient function in the degree-\(12\) matrix-coefficient
model, which is a genuine element of the quotient eigenspace.

Let
\[
V_{12}=\Sym^{12}(\C^2),
\qquad
e_j(x,y)=x^{12-j}y^j,\qquad j=0,\dots,12,
\]
and let
\[
I_{12}(x,y)=x^{11}y+11x^6y^6-xy^{11}\in V_{12}.
\]
For \(z=(\alpha,\beta)\in S^3\cong SU(2)\), define
\[
I_{12,z}(x,y)=
I_{12}(\alpha x-\overline\beta y,\ \beta x+\overline\alpha y).
\]
Expanding in the monomial basis,
\[
I_{12,z}(x,y)=\sum_{j=0}^{12}A_j(z)\,e_j(x,y).
\]

\begin{proposition}
\label{prop:partI-coefficient-functions}
For each \(j=0,\dots,12\), the coefficient function \(A_j\) is right
\(I^*\)-invariant and therefore descends to a smooth complex-valued function on
\[
M=S^3/I^*.
\]
Moreover \(A_j\) belongs to the complexified first eigenspace
\[
E_\C=E\otimes_\R\C,
\]
and is a pure left-weight vector of weight \(12-2j\).
\end{proposition}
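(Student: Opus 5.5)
The plan is to read the \(A_j\) as matrix coefficients of the irreducible \(SU(2)\)-representation \(V_{12}\), and to get all three claims (descent, eigenvalue, weight) from the standard matrix-coefficient picture. Write \(U_z=\begin{pmatrix}\alpha&-\overline\beta\\ \beta&\overline\alpha\end{pmatrix}\in SU(2)\). Then \(I_{12,z}=I_{12}\circ U_z\), so \(I_{12,z}=\pi(U_z)I_{12}\) for the natural (pullback) action \(\pi\) of \(SU(2)\) on \(V_{12}\). Consequently \(A_j(z)=\langle \pi(U_z)I_{12},e_j^*\rangle\), where \(e_j^*\) is the coordinate functional dual to the monomial \(e_j\).

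\textbf{Invariance and descent.} The key input is Klein's classical fact that \(I_{12}=xy(x^{10}+11x^5y^5-y^{10})\) is invariant under the binary icosahedral group, i.e.\ \(I_{12}\circ U_\gamma=I_{12}\) for \(\gamma\in I^*\). Under the convention that makes \(U_{z\gamma}\) act on \((x,y)\) before \(U_z\) on the relevant side, we get
\[
I_{12,z\gamma}=I_{12}\circ U_\gamma\circ U_z=I_{12,z}.
\]
All coefficients \(A_j\) are therefore invariant under \(z\mapsto z\gamma\), and they descend to smooth functions on \(M=S^3/I^*\).

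I expect this convention check to be the main obstacle. One must verify that the side of multiplication that leaves \(I_{12,z}\) invariant is the same side used to form the quotient \(S^3/I^*\). If the naive composition puts \(\gamma\) on the wrong side, I would replace \(\gamma\) by \(\gamma^{-1}\), or pass to \(U_z^{-1}=U_z^*\). This only conjugates the identification, and the quotient is the same manifold up to the isometry \(z\mapsto\bar z\).

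\textbf{Eigenvalue.} Each \(A_j\) is a polynomial in \(\alpha,\beta,\overline\alpha,\overline\beta\), homogeneous of total degree \(12\). By Peter--Weyl, the span of the matrix coefficients of \(V_{12}\) is exactly the space \(\mathcal H_{12}(S^3)\) of degree-\(12\) spherical harmonics. These are eigenfunctions of \(-\Delta_{S^3}\) with eigenvalue \(12\cdot14=168\). One can also see this directly: the Casimir of \(V_{12}\) acts on matrix coefficients as the Laplacian. Since the \(A_j\) are \(I^*\)-invariant elements of \(\mathcal H_{12}\), they lie in \(E_{168}(g_0)\otimes\C=E_\C\) by \Cref{prop:partI-first-eigenvalue}. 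Concretely, their real and imaginary parts are real \(I^*\)-invariant eigenfunctions with eigenvalue \(168\).

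\textbf{Weight.} Let \(h_\theta=\operatorname{diag}(e^{i\theta},e^{-i\theta})\), acting on the side opposite to \(I^*\); this is the Hopf circle action, which commutes with the \(I^*\)-action and so descends to \(M\). Then
\[
I_{12,z h_\theta}(x,y)=I_{12,z}(e^{i\theta}x,e^{-i\theta}y)=\sum_j A_j(z)\,e^{i(12-2j)\theta}e_j(x,y).
\]
Hence \(A_j(zh_\theta)=e^{i(12-2j)\theta}A_j(z)\), so \(A_j\) is a pure weight vector of weight \(12-2j\) for this circle action. The one remaining bookkeeping point is to match this side to the paper's label ``left''; it must be the side that commutes with the \(I^*\)-quotient, which is the convention issue already flagged above.
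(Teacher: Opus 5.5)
Your proposal is correct and is essentially the paper's argument: the \(A_j\) are degree-\(12\) matrix coefficients with fixed vector \(I_{12}\), descent comes from the \(I^*\)-invariance of Klein's form, membership in \(E_\C\) comes from Peter--Weyl, and the weight comes from the diagonal torus. The side convention you flag resolves in favor of your own displayed computation, so no fallback is needed: the first column of \(U_\gamma U_z\) is \(\gamma z\), hence \(U_{\gamma z}=U_\gamma U_z\) for the linear action on \(\C^2\), while \(U_{e^{it}z}=U_z\operatorname{diag}(e^{it},e^{-it})\), so the \(A_j\) are invariant under the linear \(I^*\)-action and have weight \(12-2j\) for the scalar Hopf circle \(z\mapsto e^{it}z\), which are exactly the quotient and circle action used in \Cref{lem:partI-exceptional-fiber-period} (the paper's own displayed chain cancels \(\pi_{12}(h)\) from the inner side, and that is where its left/right labels slip).
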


\begin{proof}
Let \(\eta=I_{12}\in V_{12}\). Since \(I_{12}\) spans the unique
\(I^*\)-invariant line in \(V_{12}\), we have
\[
\pi_{12}(h)\eta=\eta
\qquad\forall h\in I^*.
\]
Hence for \(z\in SU(2)\) and \(h\in I^*\),
\[
I_{12,zh}(x,y)
=
I_{12}\bigl(\pi_{12}(zh)(x,y)\bigr)
=
I_{12}\bigl(\pi_{12}(z)\pi_{12}(h)(x,y)\bigr)
=
I_{12}\bigl(\pi_{12}(z)(x,y)\bigr)
=
I_{12,z}(x,y).
\]
Therefore each coefficient \(A_j\) is right \(I^*\)-invariant and descends to
\(M=SU(2)/I^*\).

By Peter--Weyl, the right \(I^*\)-invariant degree-\(12\) block on \(SU(2)\)
is exactly the complexified first eigenspace \(E_\C\). Since each \(A_j\) is a
degree-\(12\) matrix coefficient with fixed right vector \(\eta\), we have
\[
A_j\in E_\C.
\]

For the weight statement, let
\[
a_t=\begin{pmatrix}e^{it}&0\\0&e^{-it}\end{pmatrix}\in SU(2).
\]
Then
\[
a_t\cdot e_j=e^{i(12-2j)t}e_j.
\]
Since the map \(v\mapsto \langle v,\pi_{12}(\,\cdot\,)\eta\rangle\) is left
\(SU(2)\)-equivariant, the descended function corresponding to \(e_j\) has left
weight \(12-2j\). Hence \(A_j\) is a pure left-weight vector of weight
\(12-2j\).
\end{proof}

\begin{lemma}
\label{lem:partI-A0-equals-raw-polynomial}
The coefficient function \(A_0\) satisfies
\[
A_0(\alpha,\beta)=I_{12}(\alpha,\beta)
\qquad\forall (\alpha,\beta)\in S^3.
\]
In particular,
\[
H_2=\Re(A_0)\in E
\]
is a well-defined real first eigenfunction on \(M\).
\end{lemma}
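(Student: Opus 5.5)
The identity is a direct evaluation of the defining expansion at a convenient point of the variables \((x,y)\). By definition,
\[
I_{12,z}(x,y)=I_{12}(\alpha x-\overline\beta y,\ \beta x+\overline\alpha y)=\sum_{j=0}^{12}A_j(z)\,x^{12-j}y^j .
\]
I will set \((x,y)=(1,0)\). On the right, every monomial \(e_j\) with \(j\ge1\) contains a factor of \(y\) and vanishes, while \(e_0(1,0)=1\). So the right-hand side reduces to \(A_0(z)\). On the left, the substitution gives \(\alpha x-\overline\beta y=\alpha\) and \(\beta x+\overline\alpha y=\beta\), so the left-hand side is \(I_{12}(\alpha,\beta)\). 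Since \(I_{12,z}\) is a homogeneous polynomial identity in \((x,y)\) for each fixed \(z\), this specialization is legitimate, and we obtain \(A_0(\alpha,\beta)=I_{12}(\alpha,\beta)\) for all \((\alpha,\beta)\in S^3\).

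An equivalent way to see this is that \(A_0\) is the coefficient of \(x^{12}\), namely \(I_{12,z}\) divided by \(x^{12}\) in the limit \(y\to0\). I will present the \((x,y)=(1,0)\) evaluation as the cleanest form.

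For the second assertion, \Cref{prop:partI-coefficient-functions} gives that \(A_0\) is right \(I^*\)-invariant, descends to \(M\), and lies in \(E_\C=E\otimes_\R\C\). The eigenspace \(E\) is the real eigenspace of the real operator \(-\Delta_{g_0}\), so \(E_\C\) is closed under complex conjugation. Hence \(\Re(A_0)=\tfrac12(A_0+\overline{A_0})\) lies in \(E_\C\) and is real-valued. It therefore belongs to \(E\), since a real-valued element of \(E\otimes\C=E\oplus iE\) has vanishing imaginary component. Smoothness and well-definedness on \(M\) come from the descent statement. Together with the first part, this also shows that the raw polynomial \(I_{12}(\alpha,\beta)\) is itself right \(I^*\)-invariant on \(S^3\).

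The only delicate point is a convention check. One must confirm that the action used in \Cref{prop:partI-coefficient-functions}, namely \(\pi_{12}(z)(x,y)=(\alpha x-\overline\beta y,\beta x+\overline\alpha y)\), is the same substitution used to define \(I_{12,z}\). Then the right-invariance computation \(I_{12,zh}=I_{12,z}\), which uses \(\pi_{12}(zh)=\pi_{12}(z)\pi_{12}(h)\) as composition of substitutions, applies verbatim. I would check the order of composition explicitly, but no other obstacle arises: the lemma is essentially immediate from specializing the expansion.
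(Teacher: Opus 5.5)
Your proposal is correct and is essentially the paper's proof: specializing to $y=0$ (you take $(x,y)=(1,0)$) isolates the $x^{12}$-coefficient as $I_{12}(\alpha,\beta)$, and $\Re(A_0)\in E$ then follows from \Cref{prop:partI-coefficient-functions}, exactly as in the paper. One caution about your aside, which does not affect this lemma: the composition-order check does not go through verbatim. With the substitution convention one has $I_{12,zh}=I_{12,z}\circ h$, which need not equal $I_{12,z}$; for instance $h=\bigl(\begin{smallmatrix}0&-1\\1&0\end{smallmatrix}\bigr)\in I^*$ gives $A_0(zh)=\overline{A_0(z)}$. What always holds is $I_{12,hz}=I_{12,z}$, so the invariance you actually get is under the linear $I^*$-action on $\C^2$, which commutes with the Hopf circle action and is the action the quotient should be taken by.
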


\begin{proof}
The coefficient \(A_0(z)\) is, by definition, the coefficient of \(x^{12}\) in
\[
I_{12,z}(x,y)=I_{12}(\alpha x-\overline\beta y,\ \beta x+\overline\alpha y).
\]
Setting \(y=0\) gives
\[
I_{12,z}(x,0)=I_{12}(\alpha x,\beta x)=x^{12}I_{12}(\alpha,\beta).
\]
Therefore the coefficient of \(x^{12}\) is exactly \(I_{12}(\alpha,\beta)\),
that is,
\[
A_0(\alpha,\beta)=I_{12}(\alpha,\beta).
\]
Since \(A_0\in E_\C\) by \Cref{prop:partI-coefficient-functions}, its real part
belongs to the real eigenspace \(E\).
\end{proof}

We next record the quotient-period calculation for exceptional Hopf fibers.

\begin{lemma}
\label{lem:partI-exceptional-fiber-period}
Let \(C_m\subset M=S^3/I^*\) be an exceptional Hopf fiber of order
\(m\in\{2,3,5\}\), and let \(z\in S^3\) lie on a lift of \(C_m\).
Then the full preimage in \(I^*\subset SU(2)\) of the stabilizer of the image
of \(z\) in \(S^2=\mathbb CP^1\) is cyclic of order \(2m\). Equivalently, on
the lifted Hopf circle
\[
t\longmapsto e^{it}z,
\]
the quotient circle \(C_m\) is obtained by identifying
\[
t\sim t+\frac{2\pi}{2m}.
\]
Consequently, if \(G\in E_\C\) is a pure left-weight vector of weight \(\ell\),
then along the lifted Hopf circle one has
\[
G(e^{it}z)=e^{i\ell t}G(z),
\]
and the restriction of \(G\) descends nontrivially to \(C_m\) only if
\(2m\mid \ell\). In that case the descended quotient frequency is
\[
\frac{\ell}{2m}.
\]
\end{lemma}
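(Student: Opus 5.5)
The plan is to work entirely in $SU(2)$, with $S^3\cong SU(2)$ and $M=SU(2)/I^*$ (right quotient, as in \Cref{prop:partI-coefficient-functions}). We fix the Hopf map $\pi:S^3\to S^2=\mathbb{CP}^1$ so that its fibers are the left orbits $t\mapsto e^{it}z$, i.e. $a_t z$ with $a_t=\mathrm{diag}(e^{it},e^{-it})$, and the right action of $I^*$ descends to the action of $A_5=I^*/\{\pm1\}$ on $S^2$. (If the paper's convention puts the Hopf action on the other side, we swap sides throughout.)

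\textbf{Step 1: stabilizer.} For $\bar z=\pi(z)$, the stabilizer of $\bar z$ in $A_5$ is cyclic of order $m$ exactly when $\bar z$ is a vertex, face center or edge midpoint, with $m=5,3,2$. Its preimage $\Gamma\subset I^*$ contains $-1$, has order $2m$, and fixes the point $\bar z$. The elements of $SU(2)$ fixing a point of $\mathbb{CP}^1$ form a maximal torus conjugate to $\{a_t\}$. Hence $\Gamma$ is a finite subgroup of a circle, and so $\Gamma$ is cyclic of order $2m$.

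\textbf{Step 2: fiber identification.} Next we determine which points $e^{it}z$ of the lifted circle are identified in $M$. We have $e^{is}z\sim e^{it}z$ iff $e^{is}z=e^{it}zh$ for some $h\in I^*$, i.e. $h=z^{-1}a_{s-t}z$. This says exactly that $h$ lies in $z^{-1}Tz\cap I^*$, which is the group $\Gamma$ of Step 1 (transported by $z$). Since $\Gamma\cong\mathbb Z/2m$ sits inside the circle $z^{-1}Tz$, conjugation by $z$ identifies it with $\{a_{2\pi k/2m}\}$. Therefore the identification on the lifted circle is $t\sim t+2\pi/(2m)$, and $C_m$ is the quotient circle of length $1/2m$ of the generic one.

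\textbf{Step 3: weights.} By \Cref{prop:partI-coefficient-functions}, a pure left-weight vector satisfies $G(a_t z)=e^{i\ell t}G(z)$; this is precisely the definition of left weight under the torus $a_t$. The restriction of $G$ to the lifted circle is then $t\mapsto e^{i\ell t}G(z)$. If $G(z)\neq 0$, this function is invariant under $t\mapsto t+2\pi/2m$ iff $e^{2\pi i\ell/2m}=1$, i.e. iff $2m\mid\ell$. At the same time, $G$ is a well-defined function on $M$, so its restriction must be invariant under the identification. Consequently, if $2m\nmid\ell$, then $G$ vanishes identically on $C_m$.

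If instead $2m\mid\ell$, we reparametrize $C_m$ by $s=2m t\in[0,2\pi)$. The restriction becomes $e^{i(\ell/2m)s}G(z)$, so the descended frequency is $\ell/2m$.

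\textbf{Main obstacle.} The delicate point is the bookkeeping of conventions. We must make sure the Hopf circle $e^{it}z$ is the left torus orbit used to define weights, and we must account for the factor $2$ coming from $-1\in I^*$. Any mismatch here would change the period from $2\pi/2m$ to $2\pi/m$. We will check this against the known fact that $-1$ acts on $V_{12}$ trivially (the degree $12$ is even), which is consistent with all the weights $12-2j$ being even.
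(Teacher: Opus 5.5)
Your proposal is correct and follows the paper's route. The preimage in \(I^*\) of the order-\(m\) stabilizer has order \(2m\) and acts on the lifted Hopf circle through the \(2m\)-th roots of unity. Invariance of \(t\mapsto e^{i\ell t}G(z)\) under \(t\mapsto t+2\pi/(2m)\) then forces \(2m\mid\ell\) or \(G\equiv0\) on \(C_m\), with quotient frequency \(\ell/(2m)\).

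Your Step~2 computation, \(h=z^{-1}a_{s-t}z\in z^{-1}Tz\cap I^*\), actually proves something the paper only asserts: the identifications on the fiber are given by the full cyclic group of \(2m\)-th roots. Your remark about swapping sides is also needed. With the paper's own matrix model of \(z=(\alpha,\beta)\), scalar multiplication \(e^{it}z\) is right multiplication by \(a_t\), and the concrete \(A_j\) are invariant under the linear (left) action of \(I^*\). The argument is symmetric under exchanging sides, so it goes through unchanged.
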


\begin{proof}
The Hopf map \(S^3\to S^2=\mathbb CP^1\) is \(SU(2)\)-equivariant with kernel
\(\{\pm I\}\). An orbifold point of order \(m\) in \(S^2/A_5\) has stabilizer
of order \(m\) in \(A_5\), so its full preimage in the binary icosahedral group
\(I^*\subset SU(2)\) has order \(2m\). Since this stabilizer fixes the complex
line \(\C z\subset \C^2\), it acts on the lifted Hopf fiber through \(z\) by
multiplication by \(2m\)-th roots of unity. Hence the quotient circle is
obtained by identifying the Hopf parameter modulo \(2\pi/(2m)\).

If \(G\) has left weight \(\ell\), then by definition
\[
G(e^{it}z)=e^{i\ell t}G(z).
\]
This descends to the quotient circle if and only if it is invariant under the
identification \(t\mapsto t+2\pi/(2m)\), namely if and only if
\[
e^{i\ell 2\pi/(2m)}=1,
\]
equivalently \(2m\mid \ell\). When this holds, the quotient frequency is
\(\ell/(2m)\).
\end{proof}

\begin{lemma}
\label{lem:partI-H2-restriction}
The restriction of \(H_2=\Re(A_0)\) to an order-\(2\) exceptional fiber \(C_2\)
descends to a nonconstant real trigonometric mode of quotient frequency \(3\).
Consequently \(H_2|_{C_2}\) has exactly six nondegenerate critical points on the
quotient circle.
\end{lemma}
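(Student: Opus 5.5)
By \Cref{prop:partI-coefficient-functions}, \(A_0\) is a pure left-weight vector of weight \(\ell=12\). The plan is to apply \Cref{lem:partI-exceptional-fiber-period} with \(m=2\), where \(2m=4\) divides \(12\). Along a lifted order-\(2\) Hopf circle \(t\mapsto e^{it}z_2\) we then have
\[
A_0(e^{it}z_2)=e^{12it}A_0(z_2).
\]
This restriction is invariant under \(t\mapsto t+2\pi/4\), so it descends to \(C_2\). In the quotient parameter \(s=4t\in\R/2\pi\Z\) it becomes \(e^{3is}A_0(z_2)\). Taking real parts,
\[
H_2|_{C_2}(s)=\Re\bigl(A_0(z_2)e^{3is}\bigr)=|A_0(z_2)|\cos(3s+\phi),
\]
where \(\phi=\arg A_0(z_2)\). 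So the claim reduces to showing that this mode is nonconstant, that is, \(A_0(z_2)\neq0\). By \Cref{lem:partI-A0-equals-raw-polynomial}, this is the same as \(I_{12}(z_2)\neq0\) for some point \(z_2\) on a lift of \(C_2\).

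I expect this nonvanishing to be the main obstacle. The lifts of \(C_2\) are the Hopf fibers over the \(30\) edge midpoints, so \(z_2=(\alpha,\beta)\) is determined, up to phase, by the ratio \(\alpha:\beta\). Under stereographic identification, that ratio is a root of the binary form of degree \(30\) vanishing on the edge midpoints. Since \(I_{12}\) is homogeneous of degree \(12\) and nonzero at \((\alpha,\beta)\) precisely when \([\alpha:\beta]\) is not among its zeros, it suffices to know where \(I_{12}\) vanishes. Its zeros in \(\mathbb CP^1\) are \(0\), \(\infty\) and the ten roots of \(u^{10}+11u^5-1\). These are exactly the \(12\) icosahedron vertices, which form the order-\(5\) orbit. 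They are disjoint from the edge-midpoint orbit, because distinct \(A_5\)-orbits are disjoint (the exceptional orbits have sizes \(12\), \(20\), \(30\)). Hence \(I_{12}(z_2)\neq0\).

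I would check this carefully against the paper's conventions: the zero set of Klein's form \(x^{11}y+11x^6y^6-xy^{11}\) is the vertex orbit in the standard normalization, and the chosen embedding of \(I^*\) should be the one fixing \(I_{12}\). If one prefers to avoid the orbit argument, a fallback is to compute directly. Take an explicit edge midpoint, such as the image of \(0\) and \(\infty\) midpoint for Klein's coordinates, namely a root of \(T_{30}\). Then evaluate \(I_{12}\) there numerically or exactly.

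Finally, for \(a=|A_0(z_2)|>0\), the function \(a\cos(3s+\phi)\) on \(\R/2\pi\Z\) has derivative \(-3a\sin(3s+\phi)\). This vanishes at exactly six points, \(3s+\phi\in\pi\Z\) modulo \(6\pi\). The second derivative at those points is \(\mp 9a\neq0\). So \(H_2|_{C_2}\) has exactly six nondegenerate critical points on the quotient circle: three maxima and three minima.
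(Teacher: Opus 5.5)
Your proof is correct and has the same skeleton as the paper's: weight $12$, the period lemma with $m=2$ giving quotient frequency $3$, and the count for $a\cos(3s+\phi)$. The only real difference is how you prove $A_0\neq0$ over $C_2$.

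The paper takes your fallback. It evaluates at the lift $z_2=\frac{1}{\sqrt2}(i,1)$ and gets $I_{12}(z_2)=-\frac{11}{64}-\frac{i}{32}$. That is a one-line computation, but it presupposes that $[i:1]$ lies over an edge midpoint for the copy of $I^*$ fixing $I_{12}$. The paper supports this only by noting that $\mathfrak h(z_2)=(0,1,0)$ is an edge midpoint in Part~I's coordinates. Those coordinates are not the ones adapted to $I_{12}$: there $\mathfrak h(1,0)=(0,0,1)$ is also an edge midpoint, although $I_{12}(1,0)=0$. The claim is true anyway, since $u=i$ satisfies $u^{10}=-1$ and is therefore one of the ten equatorial edge midpoints of Klein's icosahedron.

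Your orbit argument avoids this coordinate bookkeeping entirely and gives nonvanishing at every point over $C_2$. With an orbit of size $20$ in place of $30$, it also gives $A_0\neq0$ over $C_3$. This is worth having, because the point $z_3$ used for that in \Cref{lem:partI-H2-C3-C5} has ratio $u=\alpha/\beta$ with $\arg u^5=5\pi/4$. By contrast, every exceptional point of Klein's icosahedron has $u\in\{0,\infty\}$ or $u^5\in\mathbb{R}\cup\{\pm i\}$.

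You can also streamline your argument and drop the normalization caveat. The zero set of $I_{12}$ in $\mathbb{CP}^1$ is $A_5$-invariant and has at most $12$ points, so it cannot contain any point of an orbit of size $30$. The only input is that $I^*$ fixes $I_{12}$, which \Cref{prop:partI-coefficient-functions} already assumes.
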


\begin{proof}
Choose the lift
\[
z_2=\frac{1}{\sqrt2}(i,1)\in S^3,
\]
whose Hopf image is the edge-midpoint point \((0,1,0)\in S^2\), hence lies on
an order-\(2\) exceptional fiber.

By \Cref{lem:partI-A0-equals-raw-polynomial},
\[
A_0(z_2)=I_{12}\!\left(\frac{i}{\sqrt2},\frac{1}{\sqrt2}\right).
\]
A direct computation, recorded in
Appendix~\ref{appendix:explicit-computations}, gives
\[
I_{12}\!\left(\frac{i}{\sqrt2},\frac{1}{\sqrt2}\right)
=
-\frac{11}{64}-\frac{i}{32}\neq0.
\]
Hence \(A_0|_{C_2}\) is not identically zero.

By \Cref{prop:partI-coefficient-functions}, \(A_0\) has left weight \(12\). By
\Cref{lem:partI-exceptional-fiber-period}, its restriction to an order-\(2\)
fiber descends with quotient frequency
\[
\frac{12}{2\cdot 2}=3.
\]
Therefore, after identifying \(C_2\) with a circle of period \(2\pi\), the
restriction \(A_0|_{C_2}\) is a nonzero complex exponential mode of frequency
\(3\). Its real part \(H_2|_{C_2}\) is therefore a nonconstant real
trigonometric mode of frequency \(3\), hence has exactly six nondegenerate
critical points on the quotient circle.
\end{proof}

\begin{lemma}
\label{lem:partI-H2-C3-C5}
Let
\[
H=\Re(A_0)\in E.
\]
Then:
\begin{enumerate}[label=\arabic*.]
\item \(H|_{C_5}\equiv 0\);
\item \(H|_{C_3}\) descends to a nonconstant real trigonometric mode of quotient
frequency \(2\);
\item consequently \(H|_{C_3}\) has exactly four nondegenerate critical points
on the quotient circle.
\end{enumerate}
\end{lemma}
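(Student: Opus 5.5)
The plan is to run the argument of \Cref{lem:partI-H2-restriction} again, now on the fibers of orders \(5\) and \(3\). We use \Cref{prop:partI-coefficient-functions}, which says \(A_0\in E_\C\) is a pure left-weight vector of weight \(\ell=12\), together with the quotient-period statement \Cref{lem:partI-exceptional-fiber-period}.

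\textbf{Step 1 (\(C_5\)).} Let \(z_5\in S^3\) lie on a lift of \(C_5\). By \Cref{lem:partI-exceptional-fiber-period}, on the lifted circle \(t\mapsto e^{it}z_5\) we have \(A_0(e^{it}z_5)=e^{12it}A_0(z_5)\). The same lemma says that \(t\) and \(t+2\pi/10\) give the same point of \(M\). Since \(A_0\) is a genuine function on \(M\), it follows that
\[
A_0(z_5)=e^{12\cdot 2\pi i/10}A_0(z_5)=e^{12\pi i/5}A_0(z_5).
\]
Because \(e^{12\pi i/5}\neq1\) (equivalently \(10\nmid 12\)), this forces \(A_0(z_5)=0\). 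The lift \(z_5\) was arbitrary, so \(A_0\) vanishes on the whole lifted circle, and hence \(H|_{C_5}\equiv0\).

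As a consistency check, this matches the classical fact that the binary form \(I_{12}(x,y)=xy(x^{10}+11x^5y^5-y^{10})\) has as its \(12\) roots in \(\mathbb CP^1\) exactly the icosahedral vertices. By \Cref{lem:partI-A0-equals-raw-polynomial}, \(A_0(\alpha,\beta)=I_{12}(\alpha,\beta)\), so \(A_0\) vanishes precisely on the Hopf fibers over the vertices, which are the order-\(5\) points.

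\textbf{Step 2 (\(C_3\)).} Here \(2m=6\) divides \(12\), so by \Cref{lem:partI-exceptional-fiber-period} the restriction descends with quotient frequency \(12/6=2\). Nonvanishing is the only remaining issue, and this is the step I expect to need the most care. I would not compute a specific value; instead I would use the root description above. The zero set of \(A_0=I_{12}\) on \(S^3\) is the union of the Hopf fibers over the \(12\) vertex points of \(\mathbb CP^1\). A face-center point is an order-\(3\) point and is distinct from every vertex, so \(I_{12}\) does not vanish on any lift \(z_3\) of \(C_3\).

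The one delicate point is to make sure that the Hopf/stereographic identification used in the paper sends the roots of \(I_{12}\) to the order-\(5\) orbit. This is consistent with Step 1, which already forced \(A_0=0\) on \(C_5\). Since \(I_{12}\) has exactly \(12\) roots and the order-\(5\) orbit has \(12\) points, its zero set is exactly the \(C_5\) lift, and in particular it misses \(C_3\).

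As a fallback, one can evaluate \(I_{12}\) at an explicit lift of a face center, exactly as was done in the appendix for \(z_2\), and check that the value is nonzero.

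\textbf{Step 3.} Identify \(C_3\) with a circle of period \(2\pi\). Then \(A_0|_{C_3}=c\,e^{2is}\) with \(c\neq0\), so \(H|_{C_3}=|c|\cos(2s+\arg c)\). This is a nonconstant real trigonometric mode of frequency \(2\). Its derivative \(-2|c|\sin(2s+\arg c)\) has exactly four simple zeros per period, and at each of them the second derivative equals \(\pm4|c|\neq0\). Hence \(H|_{C_3}\) has exactly four nondegenerate critical points on the quotient circle.
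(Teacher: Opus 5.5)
Your proof is correct. Steps~1 and~3 coincide with the paper's argument: weight $12$ together with $10\nmid 12$ kills $A_0$ on $C_5$, and a nonzero frequency-$2$ mode $|c|\cos(2s+\arg c)$ has exactly four nondegenerate critical points.

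You genuinely differ from the paper in showing that $A_0$ does not vanish on $C_3$. The paper evaluates $A_0$ at the explicit point $z_3=(r,se^{-i\pi/4})$, which it places over $(1,1,1)/\sqrt3$, and obtains $\frac{11\sqrt3}{216}-\frac{i}{27}\neq0$. You instead observe that $A_0=I_{12}=xy(x^{10}+11x^5y^5-y^{10})$ is a nonzero binary form of degree $12$. It therefore has at most $12$ roots in $\mathbb{CP}^1$, and Step~1 already places all $12$ vertex points among them. Hence $A_0$ vanishes on no Hopf fiber over a non-vertex point, and in particular on no lift of $C_3$. The paper's route gives an explicit value. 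Yours needs no computation and does not depend on locating a face center in coordinates.

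That independence matters here. Under the paper's Hopf map $\mathfrak h$, the vertex lift $z_5=(1,0)$ goes to $(0,0,1)$, so $(0,0,1)$ is a vertex of the icosahedron attached to Klein's form $I_{12}$. The point $(1,1,1)/\sqrt3$ lies at angular distance $\arccos(1/\sqrt3)\approx54.7^\circ$ from it. But the face centers of an icosahedron lie at angular distances $\approx37.4^\circ$, $79.2^\circ$, $100.8^\circ$, $142.6^\circ$ from any vertex. So $(1,1,1)/\sqrt3$ is a face center only in the $(0,\pm1,\pm\tau)$ normalization used for $P$ in Part~I. In the normalization where $I_{12}$ is the $I^*$-invariant, the paper's $z_3$ does not lie on $C_3$.

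Your counting argument is therefore what actually establishes $A_0|_{C_3}\not\equiv0$. If you fall back on a direct evaluation, take the lift over a true face center of Klein's icosahedron, for example a point at angular distance $\approx37.4^\circ$ from $(0,0,1)$.
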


\begin{proof}
By \Cref{prop:partI-coefficient-functions}, \(A_0\) has left weight \(12\).
Hence, by \Cref{lem:partI-exceptional-fiber-period}, its restriction to an
order-\(m\) exceptional fiber can descend nontrivially only if \(2m\mid 12\).

For \(m=5\), this divisibility fails, since \(10\nmid 12\). Therefore
\[
A_0|_{C_5}\equiv 0,
\]
and hence
\[
H|_{C_5}\equiv 0.
\]

For \(m=3\), the divisibility condition holds, since \(6\mid 12\), and the
quotient frequency is
\[
\frac{12}{2\cdot 3}=2.
\]
It remains to check that the restriction is not identically zero.

Take the order-\(3\) lift
\[
z_3=(r,se^{-i\pi/4}),
\qquad
r=\sqrt{\frac{1+1/\sqrt3}{2}},
\qquad
s=\sqrt{\frac{1-1/\sqrt3}{2}},
\]
as in Part~II. Then
\[
A_0(z_3)=I_{12}(z_3).
\]
A direct computation gives
\[
A_0(z_3)=\frac{11\sqrt3}{216}-\frac{i}{27}\neq 0.
\]
Hence \(A_0|_{C_3}\) is a nonzero complex quotient mode of frequency \(2\), and
its real part \(H|_{C_3}\) is a nonzero real trigonometric mode of frequency
\(2\). Such a mode has exactly four nondegenerate critical points on the circle.
\end{proof}

\begin{lemma}[Local splitting of a Morse--Bott critical circle]
\label{lem:MB-circle-splitting}
Let \(N\) be a smooth \(3\)-manifold, let \(C\subset N\) be an embedded circle,
and let \(f\in C^\infty(N)\) be Morse--Bott along \(C\). Assume that in tubular
coordinates \((\theta,x,y)\in S^1\times \R^2\) with \(C=\{x=y=0\}\), one has
\[
f(\theta,x,y)=c+Q(x,y)+R(\theta,x,y),
\qquad R(\theta,x,y)=O(\|(x,y)\|^3),
\]
where \(Q\) is a nondegenerate quadratic form in \((x,y)\).

Let \(h\in C^\infty(N)\), and assume that the restriction
\[
\theta\longmapsto h(\theta,0,0)
\]
is a Morse function on \(S^1\) with exactly \(k\) critical points.

Then, for all sufficiently small nonzero \(\varepsilon\), the perturbed function
\[
f_\varepsilon=f+\varepsilon h
\]
has exactly \(k\) critical points in a sufficiently small tubular neighborhood
of \(C\), all nondegenerate. Moreover, these critical points depend smoothly on
\(\varepsilon\) and converge to the corresponding critical points of
\(h|_C\) as \(\varepsilon\to0\).
\end{lemma}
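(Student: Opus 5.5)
The plan is a Lyapunov--Schmidt reduction in the normal directions, followed by an implicit function argument on the circle. The critical point equations for \(f_\varepsilon\) in tubular coordinates are
\[
\partial_x f+\varepsilon\partial_x h=0,\qquad
\partial_y f+\varepsilon\partial_y h=0,\qquad
\partial_\theta f+\varepsilon\partial_\theta h=0.
\]
Since \(\nabla_{(x,y)}f=\nabla Q(x,y)+O(\|(x,y)\|^2)\) and \(\nabla Q\) is an invertible linear map, I will first solve the normal equations. Apply the implicit function theorem to
\[
\Phi(\theta,x,y,\varepsilon)=\nabla_{(x,y)}f+\varepsilon\nabla_{(x,y)}h
\]
at \((\theta,0,0,0)\). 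Its \((x,y)\)-derivative there is \(\Hess Q\), which is invertible. Compactness of \(S^1\) then gives \(\delta,\varepsilon_0>0\) and a smooth map \((\theta,\varepsilon)\mapsto w(\theta,\varepsilon)\in\R^2\) with \(w=O(\varepsilon)\), uniformly in \(\theta\). For \(|\varepsilon|<\varepsilon_0\) and \(\|(x,y)\|<\delta\), the normal equations hold if and only if \((x,y)=w(\theta,\varepsilon)\). Uniqueness comes from the uniform invertibility of the normal Hessian on a tube of fixed radius.

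Next I reduce to the circle. Set
\[
g_\varepsilon(\theta)=f_\varepsilon(\theta,w(\theta,\varepsilon)).
\]
By the chain rule and the normal equations, \(g_\varepsilon'(\theta)=\partial_\theta f_\varepsilon(\theta,w)\). So the critical points of \(f_\varepsilon\) in the tube correspond bijectively to the critical points of \(g_\varepsilon\).

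I then expand \(g_\varepsilon\). We have \(\partial_\theta f=\partial_\theta R=O(\|(x,y)\|^3)\), and this holds with its \(\theta\)-derivatives, since \(R\) is smooth and vanishes to third order along \(C\) uniformly in \(\theta\). Hence
\[
\partial_\theta f(\theta,w)=O(\varepsilon^3),\qquad
\partial_\theta h(\theta,w)=\partial_\theta h(\theta,0,0)+O(\varepsilon).
\]
This gives
\[
g_\varepsilon'(\theta)=\varepsilon\bigl(\psi'(\theta)+O(\varepsilon)\bigr),
\qquad \psi(\theta)=h(\theta,0,0),
\]
with the error \(O(\varepsilon)\) in \(C^1(S^1)\), since everything is smooth in \((\theta,\varepsilon)\). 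For \(\varepsilon\neq0\) small, the zeros of \(g_\varepsilon'\) are therefore the zeros of \(G(\theta,\varepsilon)=\psi'(\theta)+O(\varepsilon)\), a smooth function with \(G(\cdot,0)=\psi'\). Because \(\psi\) is Morse with exactly \(k\) critical points, \(\psi'\) has \(k\) simple zeros. The implicit function theorem at each zero gives a smooth branch \(\theta_i(\varepsilon)\). Away from small neighbourhoods of these zeros, \(|\psi'|\) is bounded below, so no other zeros occur. This yields exactly \(k\) critical points \((\theta_i(\varepsilon),w(\theta_i(\varepsilon),\varepsilon))\), and they converge to the critical points of \(h|_C\).

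Finally, nondegeneracy. The Hessian of \(f_\varepsilon\) at such a point is block-triangularizable by the Schur complement. The normal block is \(\Hess Q+O(\varepsilon)\), which is invertible. The Schur complement in the \(\theta\) direction equals \(g_\varepsilon''(\theta_i)\), the second derivative of the reduced function, and
\[
g_\varepsilon''(\theta_i)=\varepsilon\bigl(\psi''(\theta_i)+O(\varepsilon)\bigr)\neq0.
\]
So the full Hessian is nondegenerate. The main obstacle is this Schur complement identity together with the uniform \(C^1\) control of the \(O(\varepsilon)\) remainder. The point to check carefully is that \(\partial_\theta R\) vanishes to order \(\ge2\) (indeed 3) along \(C\), so that \(\partial_\theta f\) contributes \(O(\varepsilon^2)\) in \(C^1\) and does not compete with the \(\varepsilon\psi'\) term. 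The expansion of \(R\) being smooth and \(O(\|(x,y)\|^3)\) uniformly in \(\theta\) supplies exactly this.
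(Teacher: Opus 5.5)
Your proposal is correct and follows essentially the same route as the paper. Both arguments use the implicit function theorem in the normal directions, reduce to the function $g_\varepsilon$ on the circle, compare it with $\varepsilon\, h|_C$, and obtain nondegeneracy from the Schur-complement identity. The only minor difference is that you expand $g_\varepsilon'$ and apply the implicit function theorem to $\varepsilon^{-1}g_\varepsilon'$, which makes the smooth dependence of the critical points on $\varepsilon$ explicit; the paper instead expands $g_\varepsilon$ itself in $C^2$ and invokes $C^2$-stability of Morse functions on $S^1$.
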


\begin{proof}
Write local coordinates as in the statement:
\[
(\theta,x,y)\in S^1\times \R^2,
\qquad
C=\{x=y=0\},
\]
and set
\[
u=(x,y)\in\R^2.
\]
Thus
\[
f(\theta,u)=c+Q(u)+R(\theta,u),
\qquad
R(\theta,u)=O(\|u\|^3),
\]
with \(Q\) a nondegenerate quadratic form on \(\R^2\).

Let
\[
f_\varepsilon=f+\varepsilon h.
\]
We first solve the normal critical-point equations
\[
\partial_x f_\varepsilon(\theta,x,y)=0,
\qquad
\partial_y f_\varepsilon(\theta,x,y)=0.
\]

Set
\[
F(\theta,u,\varepsilon)=\nabla_u f_\varepsilon(\theta,u)
\in\R^2.
\]
Since
\[
f(\theta,u)=c+Q(u)+O(\|u\|^3),
\]
we have
\[
F(\theta,0,0)=0
\qquad\text{for all }\theta,
\]
and
\[
D_uF(\theta,0,0)=\Hess(Q),
\]
which is invertible because \(Q\) is nondegenerate. Therefore, by the implicit
function theorem, after shrinking to a sufficiently small tubular neighborhood
of \(C\) and restricting to sufficiently small \(|\varepsilon|\), there exists a
unique smooth map
\[
u=\xi(\theta,\varepsilon)\in\R^2,
\qquad
\xi(\theta,0)=0,
\]
such that
\[
\nabla_u f_\varepsilon(\theta,\xi(\theta,\varepsilon))=0
\]
for all \(\theta\) and all sufficiently small \(\varepsilon\). Moreover every
critical point of \(f_\varepsilon\) in that tubular neighborhood must lie on the
graph
\[
\Gamma_\varepsilon=\{(\theta,\xi(\theta,\varepsilon)):\theta\in S^1\}.
\]

Thus critical points of \(f_\varepsilon\) near \(C\) are exactly the critical
points of the reduced one-variable function
\[
g_\varepsilon(\theta)=
f_\varepsilon(\theta,\xi(\theta,\varepsilon)).
\]

We now compare \(g_\varepsilon\) with the circle-restriction \(h|_C\). Since
\(\xi(\theta,0)=0\) and \(D_u f(\theta,0)=0\), we have
\[
\xi(\theta,\varepsilon)=O(\varepsilon)
\]
uniformly in \(\theta\). Hence
\[
Q(\xi(\theta,\varepsilon))=O(\varepsilon^2),
\qquad
R(\theta,\xi(\theta,\varepsilon))=O(\varepsilon^3),
\]
and
\[
h(\theta,\xi(\theta,\varepsilon))
=
h(\theta,0,0)+O(\varepsilon)
\]
in \(C^2(S^1)\). Therefore
\[
g_\varepsilon(\theta)
=
c+\varepsilon h(\theta,0,0)+O(\varepsilon^2)
\]
in \(C^2(S^1)\). Equivalently,
\[
\varepsilon^{-1}(g_\varepsilon-c)\to h(\theta,0,0)
\qquad\text{in }C^2(S^1)
\]
as \(\varepsilon\to0\), for \(\varepsilon\neq0\).

By hypothesis, the function
\[
\theta\longmapsto h(\theta,0,0)
\]
is Morse on \(S^1\) with exactly \(k\) critical points. Since Morse critical
points on the circle are stable under \(C^2\)-small perturbations, it follows
that for all sufficiently small nonzero \(\varepsilon\), the reduced function
\(g_\varepsilon\) has exactly \(k\) critical points on \(S^1\), all
nondegenerate. These critical points depend smoothly on \(\varepsilon\) and
converge to the corresponding critical points of \(h|_C\) as \(\varepsilon\to0\).

It remains to show that the corresponding critical points of the full function
\(f_\varepsilon\) are nondegenerate. Let \(\theta_*\) be a critical point of
\(g_\varepsilon\), and set
\[
u_*=\xi(\theta_*,\varepsilon).
\]
Then \((\theta_*,u_*)\) is a critical point of \(f_\varepsilon\). At this point,
the \(uu\)-block of \(\Hess f_\varepsilon\) is
\[
D_u^2 f_\varepsilon(\theta_*,u_*),
\]
which remains invertible for small \(\varepsilon\), because it is a small
perturbation of the nondegenerate matrix \(\Hess(Q)\).

Now the graph \(u=\xi(\theta,\varepsilon)\) is precisely the critical manifold of
the normal equations, so by the Schur-complement formula recorded in
Appendix~\ref{appendix:schur-complement-reduction}, the second derivative of the
reduced function \(g_\varepsilon\) at \(\theta_*\) is
\[
g_\varepsilon''(\theta_*)
=
\partial_{\theta\theta} f_\varepsilon(\theta_*,u_*)
-
\partial_{\theta u} f_\varepsilon(\theta_*,u_*)
\bigl(D_u^2 f_\varepsilon(\theta_*,u_*)\bigr)^{-1}
\partial_{u\theta} f_\varepsilon(\theta_*,u_*).
\]
Since \(g_\varepsilon''(\theta_*)\neq0\), the full Hessian of \(f_\varepsilon\)
at \((\theta_*,u_*)\) is nondegenerate. Thus every critical point obtained in
this way is nondegenerate.

Therefore, for all sufficiently small nonzero \(\varepsilon\), the function
\(f_\varepsilon\) has exactly \(k\) critical points in a sufficiently small
tubular neighborhood of \(C\), all nondegenerate. These depend smoothly on
\(\varepsilon\) and converge to the corresponding critical points of \(h|_C\) as
\(\varepsilon\to0\).
\end{proof}

\begin{proposition}
\label{prop:partI-C2-six-points}
For all sufficiently small nonzero \(\varepsilon\), the function
\[
G_\varepsilon=F_0+\varepsilon H_2
\]
has exactly six nondegenerate critical points in a sufficiently small tubular
neighborhood of \(C_2\). Moreover, all six critical points have Morse index
\(1\) or \(2\).
\end{proposition}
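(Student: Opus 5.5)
The plan is to apply \Cref{lem:MB-circle-splitting} with \(f=F_0\), \(C=C_2\), \(h=H_2\) and \(k=6\), and then read off the indices from the transverse signature.

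\textbf{Step 1: tubular normal form.} By \Cref{cor:partI-C2-morse-bott} and \Cref{lem:partI-F0-morse-bott-all}, \(F_0\) is Morse--Bott along \(C_2\) with transverse Hessian of signature \((1,1)\). We need coordinates \((\theta,x,y)\) near \(C_2\) in which \(F_0=c+Q(x,y)+O(\|(x,y)\|^3)\) with \(Q\) independent of \(\theta\). We obtain them by the parametric Morse--Bott lemma, i.e.\ a \(\theta\)-dependent Morse lemma in the normal slices. Near \(C_2\), \(F_0\) is a Hopf lift, and the normal bundle of \(C_2\) is trivial, being that of a circle in an orientable \(3\)-manifold. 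The transverse Hessian is a nondegenerate indefinite form varying smoothly in \(\theta\), so a smooth \(\theta\)-dependent linear change of normal frame brings it to the fixed form \(Q=x^2-y^2\).

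\textbf{Step 2: verify the hypothesis on \(h\) and apply the lemma.} By \Cref{lem:partI-H2-restriction}, \(H_2|_{C_2}\) is, on the quotient circle, a nonconstant mode of the form \(a\cos 3\theta+b\sin 3\theta\). It is therefore Morse with exactly six nondegenerate critical points. \Cref{lem:MB-circle-splitting} then gives, for small nonzero \(\varepsilon\), exactly six critical points of \(G_\varepsilon\) in a small tubular neighborhood of \(C_2\), all nondegenerate, and converging to the six critical points of \(H_2|_{C_2}\).

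\textbf{Step 3: indices.} At each such critical point \((\theta_*,u_*)\), the Hessian of \(G_\varepsilon\) has a \(uu\)-block close to \(\Hess Q\), which has signature \((1,1)\). By the Schur-complement decomposition used in the proof of \Cref{lem:MB-circle-splitting}, the full Hessian is congruent to
\[
D_u^2G_\varepsilon \oplus g_\varepsilon''(\theta_*).
\]
The index is therefore \(1+\#\{g_\varepsilon''(\theta_*)<0\}\), which lies in \(\{1,2\}\).

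More precisely, \(g_\varepsilon''(\theta_*)\) has the sign of \(\varepsilon\,(H_2|_{C_2})''\). Consequently three of the six points have index \(1\) and three have index \(2\).

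The main obstacle is Step 1. The lemma as stated requires a \(\theta\)-independent quadratic part, and the quotient circle \(C_2\) could in principle carry a twist in the normal frame. I would argue that for a trivialized rank-two bundle, any smooth loop of signature-\((1,1)\) forms can be conjugated to a constant one by a smooth loop in \(GL_2(\R)\): the positive and negative eigenlines give two line subbundles. If these are nonorientable, I would pass to a double cover of \(C_2\), which does not affect the count, since the critical points lie in the quotient and the reduction argument is local. Alternatively, and more simply, I would observe that the proof of \Cref{lem:MB-circle-splitting} only uses the invertibility of \(D_u^2 f\) along \(C\), so a \(\theta\)-dependent \(Q_\theta\) works verbatim.
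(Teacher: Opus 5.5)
Your proposal is correct and follows the paper's proof. You apply \Cref{lem:MB-circle-splitting} with \(k=6\) via \Cref{lem:partI-H2-restriction}, then read off the index from the signature-\((1,1)\) normal block plus the sign of the reduced second derivative; your Schur-complement congruence makes the paper's index count precise and also gives the three/three split.

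One caveat on Step~1: the nonorientable case really occurs here. The order-\(4\) stabilizer acts on the normal slice of \(C_2\) by \(w\mapsto -w\), so the eigenline bundles of the transverse Hessian are M\"obius bands and no \(\theta\)-independent \(Q\) exists. Your fallback with a \(\theta\)-dependent \(Q_\theta\), which the proof of the lemma handles verbatim, is therefore the step actually needed, and it is a point the paper's own proof passes over.
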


\begin{proof}
By \Cref{cor:partI-C2-morse-bott}, \(F_0\) is Morse--Bott along \(C_2\), with
transverse Hessian nondegenerate of signature \((1,1)\). Therefore in a tubular
neighborhood of \(C_2\) one may choose local coordinates \((\theta,x,y)\), where
\(\theta\) parametrizes the quotient circle direction and \((x,y)\) are normal
coordinates, such that
\[
F_0(\theta,x,y)=c+Q(x,y)+R(\theta,x,y),
\]
where \(Q\) is a nondegenerate quadratic form of signature \((1,1)\) and
\[
R(\theta,x,y)=O(\|(x,y)\|^3).
\]

Restricting to the critical circle \(x=y=0\), we obtain
\[
G_\varepsilon(\theta,0,0)=c+\varepsilon h(\theta),
\]
where \(h(\theta)=H_2(\theta,0,0)\). By
\Cref{lem:partI-H2-restriction}, \(h\) has exactly six nondegenerate critical
points on the quotient circle.

By the Morse--Bott splitting lemma for circle critical manifolds
\Cref{lem:MB-circle-splitting}, for all sufficiently small
nonzero \(\varepsilon\), each of these six circle-critical points lifts uniquely
to a nearby nondegenerate critical point of the full function \(G_\varepsilon\),
and there are no others in a sufficiently small tubular neighborhood of \(C_2\).

Because the normal Hessian of \(F_0\) along \(C_2\) has signature \((1,1)\),
the normal contribution to the Morse index is \(1\). The circle direction
contributes either \(0\) or \(1\), according to the sign of \(h''\) at the
corresponding circle-critical point. Hence all six resulting critical points
have Morse index \(1\) or \(2\). In particular, none is a minimum or a maximum.
\end{proof}

\section{An explicit Morse function in \(E_{168}(g_0)\) with at least eight critical points}
\label{sec:partI-eight-critical-points}

\begin{proposition}
\label{prop:partI-explicit-morse}
There exists a function
\[
F\in E_{168}(g_0)
\]
which is Morse and has at least eight critical points. In fact, one can choose
\(F\) with exactly twelve critical points.
\end{proposition}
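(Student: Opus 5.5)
We will take \(F=G_\varepsilon=F_0+\varepsilon H_2\) for a sufficiently small nonzero \(\varepsilon\). Both \(F_0\) and \(H_2=\Re(A_0)\) lie in \(E\) (\Cref{lem:partI-A0-equals-raw-polynomial}), so \(F\in E_{168}(g_0)\). The task is then to count all critical points of \(G_\varepsilon\) on \(M\) and check nondegeneracy. We split \(M\) into three tubular neighborhoods \(U_5,U_3,U_2\) of \(C_5,C_3,C_2\) and the compact complement \(K=M\setminus(U_5\cup U_3\cup U_2)\).

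\textbf{Step 1: the complement \(K\).} By \Cref{cor:partI-F0-morse-bott-all}, \(F_0\) has no critical points on \(K\). Hence \(|\nabla F_0|\ge\delta>0\) on \(K\) by compactness. For \(|\varepsilon|<\delta/\sup|\nabla H_2|\), the function \(G_\varepsilon\) has no critical points in \(K\).

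\textbf{Step 2: the circles \(C_2\) and \(C_3\).} On \(U_2\), \Cref{prop:partI-C2-six-points} gives exactly six nondegenerate critical points. On \(U_3\), we apply \Cref{lem:MB-circle-splitting} with \(f=F_0\), which is Morse--Bott along \(C_3\) with negative definite transverse Hessian by \Cref{lem:partI-F0-morse-bott-all}, and with \(h=H\). By \Cref{lem:partI-H2-C3-C5}, \(H|_{C_3}\) is Morse with four critical points. This yields exactly four nondegenerate critical points in \(U_3\): two local maxima and two of index \(2\).

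\textbf{Step 3: the circle \(C_5\), which is the main obstacle.} Here \(H|_{C_5}\equiv0\), so \Cref{lem:MB-circle-splitting} does not apply directly. We use its reduction instead. Since \(H\) vanishes on \(C_5\), the normal solution satisfies \(\xi(\theta,\varepsilon)=-\varepsilon\,\Hess(Q)^{-1}\nabla_uH(\theta,0)+O(\varepsilon^2)\). The reduced function is then
\[
g_\varepsilon(\theta)=c-\tfrac{\varepsilon^2}{2}\,\nabla_uH\cdot\Hess(Q)^{-1}\nabla_uH\big|_{(\theta,0)}+O(\varepsilon^3),
\]
so the question becomes whether this second-order term is Morse on \(C_5\).

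Weight considerations constrain it. The normal derivatives of \(A_0\) along the lifted fiber carry left weight \(12\) and transverse rotation weights \(\pm2\). On the order-\(5\) fiber (period \(2\pi/10\)), the quadratic expression in \(\nabla_u A_0,\overline{\nabla_u A_0}\) involves weights \(24\pm\cdot\) and \(0\). The weight-\(0\) part is constant. An oscillating term of weight divisible by \(10\) (for instance \(24-4=20\)) would give a quotient frequency-\(2\) mode, and hence four further critical points.

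If this computation is delicate, the fallback is the following. Since \(E\) is finite-dimensional and \(F\) need only be Morse, we replace \(G_\varepsilon\) by \(G_\varepsilon+\varepsilon^2 h_5\), where \(h_5\in E\) is a combination of weight vectors \(A_j\) with \(10\mid 12-2j\) that restricts nontrivially to \(C_5\). The choice \(j=1\) (weight \(10\), frequency \(1\)) gives two critical points on \(C_5\). This added term is too small to disturb the nondegenerate critical points found in Steps 1 and 2.

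In either case \(F\) is Morse, with six critical points near \(C_2\), four near \(C_3\), and at least two near \(C_5\), for at least \(12\ge8\) critical points in total. The count is exactly twelve if the \(C_5\) contribution is two; this agrees with the Morse inequalities, which already force a minimum and therefore critical points near the minimum circle \(C_5\). \Cref{thm:partI-obstruction} then applies to \(F\).
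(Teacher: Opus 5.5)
Your Steps~1 and~2 are fine. The gap is at $C_5$, the step you yourself flag as the crux, and neither of your two routes closes it.

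\textbf{The pure $G_\varepsilon$ route.} For $G_\varepsilon=F_0+\varepsilon H_2$ alone, you reduce the question to whether the $\varepsilon^2$ coefficient $S=-\tfrac12\nabla_uH\cdot\Hess(Q)^{-1}\nabla_uH$ is Morse on $C_5$, but you never decide it. The weight heuristic is not a computation, and in fact the oscillation it anticipates is absent. Work in the chart $(\alpha,\beta)=(1+|w|^2)^{-1/2}(e^{it},e^{it}w)$ around $C_5$, with $w=x+iy$. There $A_0=(1+|w|^2)^{-6}e^{12it}(w+11w^6-w^{11})$, so the normal differential $\cos(12t)\,dx-\sin(12t)\,dy$ of $H$ along $C_5$ has constant length. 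Moreover, invariance under the order-$5$ stabilizer ($w\mapsto\zeta w$) forces the transverse Hessian of $F_0$ to be a multiple of $|w|^2$. Hence $S$ is constant along $C_5$, and the second-order analysis says nothing about the critical points of $G_\varepsilon$ near $C_5$.

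\textbf{The fallback.} As written it fails for a scaling reason: you add $h_5=\Re(A_1)$ at order $\varepsilon^2$, which is exactly the order of $S$.
\begin{itemize}
\item On $C_5$ the reduced function is $c+\varepsilon^2\bigl(h_5|_{C_5}+S\bigr)+O(\varepsilon^3)$.
\item The perturbation $\varepsilon H+\varepsilon^2h_5$ is not of the form $\varepsilon h$ with $h$ fixed, so \Cref{lem:MB-circle-splitting} does not apply.
\item Hence ``two critical points on $C_5$'' does not follow from $h_5|_{C_5}$ alone. Under your own hypothesis that $S$ is a frequency-$2$ mode, a frequency-$1$ mode plus a frequency-$2$ mode of comparable size can have four critical points, or degenerate ones.
\end{itemize}
Because $S$ is in fact constant, the $\varepsilon^2$ fallback happens to work, but only via the computation above, which the proposal lacks. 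Your sentence ``too small to disturb the nondegenerate critical points found in Steps 1 and 2'' covers the complement and $U_2,U_3$, not $C_5$.

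\textbf{The repair.} The simple fix is the paper's: put the $C_5$-splitting term at the same order as $H$. The paper uses $F_0+aH+bK$ with $K=\Re(A_1)$. Taking $a=b=\varepsilon$, the single direction $h=H+K$ restricts as follows, by weight divisibility ($H$ vanishes on $C_5$, and $K$ vanishes on $C_3$ and $C_2$):
\begin{itemize}
\item $h|_{C_5}=K|_{C_5}$, with $2$ nondegenerate critical points;
\item $h|_{C_3}=H|_{C_3}$, with $4$;
\item $h|_{C_2}=H|_{C_2}$, with $6$.
\end{itemize}
So \Cref{lem:MB-circle-splitting} applies verbatim on each circle, no second-order analysis is needed, and you get exactly $2+4+6=12$ critical points.
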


\begin{proof}
Set
\[
H=\Re(A_0)\in E,
\qquad
K=\Re(A_1)\in E.
\]
By \Cref{lem:partI-H2-restriction}, the restriction of \(H\) to the order-\(2\)
exceptional fiber \(C_2\) descends to a nonzero real trigonometric mode of
quotient frequency \(3\), hence has exactly six nondegenerate critical points on
the quotient circle. By \Cref{lem:partI-H2-C3-C5}, the restriction of \(H\) to
\(C_3\) descends to a nonzero real trigonometric mode of quotient frequency \(2\),
hence has exactly four nondegenerate critical points on the quotient circle, and
\(H|_{C_5}\equiv 0\).

On the other hand, \(A_1\) has left weight \(10\), so by
\Cref{lem:partI-exceptional-fiber-period} its restriction vanishes on \(C_3\)
and \(C_2\), while on \(C_5\) it descends with quotient frequency
\[
\frac{10}{2\cdot 5}=1.
\]
Since \(A_1(z_5)=1\neq 0\) at the order-\(5\) lift \(z_5=(1,0)\), its
restriction to \(C_5\) is nontrivial. Therefore
\[
K|_{C_5}
\]
is a nonzero real trigonometric mode of quotient frequency \(1\), and hence has
exactly two nondegenerate critical points on the quotient circle.

By \Cref{cor:partI-F0-morse-bott-all}, the seed function \(F_0\) is Morse--Bott
along each of the three critical circles
\[
C_5,\qquad C_3,\qquad C_2.
\]
Choose pairwise disjoint tubular neighborhoods
\[
U_5,\qquad U_3,\qquad U_2
\]
of these circles, so small that the Morse--Bott normal form holds in each one.
Let
\[
L=M\setminus (U_5\cup U_3\cup U_2).
\]
Since \(L\) is compact and contains no critical point of \(F_0\), there exists
\(c>0\) such that
\[
|\nabla F_0|_{g_0}\ge c
\qquad\text{on }L.
\]
Because \(H,K\in E\) and \(E\) is finite-dimensional, for sufficiently small
\((a,b)\in\R^2\) one has
\[
|\nabla(aH+bK)|_{g_0}\le \frac c2
\qquad\text{on }L.
\]
Hence for all sufficiently small \((a,b)\),
\[
|\nabla(F_0+aH+bK)|_{g_0}\ge \frac c2>0
\qquad\text{on }L,
\]
so no critical point lies outside \(U_5\cup U_3\cup U_2\).

Now consider
\[
F_{a,b}=F_0+aH+bK.
\]

On \(C_5\), one has \(H|_{C_5}\equiv 0\), so
\[
F_{a,b}|_{C_5}=F_0|_{C_5}+bK|_{C_5}.
\]
Since \(K|_{C_5}\) is a nonzero frequency-\(1\) mode, it has exactly two
nondegenerate critical points on the quotient circle. By the Morse--Bott
splitting lemma \Cref{lem:MB-circle-splitting}, for all
sufficiently small nonzero \(b\), the function \(F_{a,b}\) has exactly two
nondegenerate critical points in \(U_5\).

On \(C_3\), one has \(K|_{C_3}\equiv 0\), so
\[
F_{a,b}|_{C_3}=F_0|_{C_3}+aH|_{C_3}.
\]
Since \(H|_{C_3}\) is a nonzero frequency-\(2\) mode, it has exactly four
nondegenerate critical points on the quotient circle. Again by the Morse--Bott
splitting lemma, for all sufficiently small nonzero \(a\), the function
\(F_{a,b}\) has exactly four nondegenerate critical points in \(U_3\).

On \(C_2\), one also has \(K|_{C_2}\equiv 0\), so
\[
F_{a,b}|_{C_2}=F_0|_{C_2}+aH|_{C_2}.
\]
By \Cref{lem:partI-H2-restriction}, \(H|_{C_2}\) is a nonzero frequency-\(3\)
mode, hence has exactly six nondegenerate critical points on the quotient
circle. The Morse--Bott splitting lemma therefore yields exactly six
nondegenerate critical points of \(F_{a,b}\) in \(U_2\), for all sufficiently
small nonzero \(a\).

Therefore, for all sufficiently small nonzero \(a,b\),
\[
\#\Crit(F_{a,b})=2+4+6=12.
\]
All these critical points are nondegenerate, so \(F_{a,b}\) is Morse. Taking
\[
F=F_{a,b}
\]
for such a choice of parameters proves the claim.
\end{proof}

\part{A perturbative existence theorem beyond the spherical metric}
\label{part:perturbative-program}

This part develops the perturbative mechanism beyond the spherical metric on
\[
M=S^3/I^*.
\]
Its logic is as follows. We first prove an abstract reduction principle: if a
nearby metric has simple first eigenvalue and minimal Morse first eigenfunction,
then it has property \(P\). We then derive the first-order conformal splitting
operator on the spherical first eigenspace
\[
E=E_{168}(g_0),
\qquad g_0=g_{\mathrm{sph}},
\]
and construct a canonical finite-dimensional realizable splitting space
\[
\mathscr B\subset \Sym(E).
\]
Inside \(\mathscr B\) we exhibit a distinguished seed operator \(A_0\) whose
simple lowest eigendirection is precisely the canonical invariant line
\(\mathbb RF_0\subset E\), and we prove that the lowest-eigenline map is a
submersion at \(A_0\). It follows that every projective line sufficiently close
to \([F_0]\) is realized as the simple lowest eigendirection of some operator in
\(\mathscr B\).

We then prove the remaining geometric input: among the explicit nearby lines
generated by
\[
F_0,\qquad \Re(F_1),\qquad \Re(F_3),\qquad \Re(F_4),
\]
there exist minimal-Morse lines arbitrarily close to \([F_0]\). Combining this
with the branch-selection theorem yields the perturbative existence theorem
beyond the spherical metric.

\section{Reduction to the simple first-eigenfunction regime}
\label{sec:partII-simple-regime}

We begin with the abstract heat-flow mechanism.

\begin{theorem}[Simple first-eigenfunction reduction]
\label{thm:partII-simple-first-eigenfunction-implies-P}
Let \((N,g)\) be a smooth closed connected Riemannian manifold. Assume that:
\begin{enumerate}[label=\arabic*.]
\item the first nonzero eigenspace \(E_{\lambda_1}(g)\) is one-dimensional;
\item an \(L^2(N,g)\)-normalized first eigenfunction \(\phi_1\) is minimal
Morse.
\end{enumerate}
Then \((N,g)\) has property \(P\).
\end{theorem}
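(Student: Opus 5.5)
The plan mirrors the proof of \Cref{thm:partI-obstruction}, but uses the one-dimensionality of \(E_{\lambda_1}(g)\) to obtain density as well as openness. Take
\[
\mathcal S=\{f\in L^2(N,g):\ \langle f,\phi_1\rangle_{L^2}\neq 0\}.
\]
This set is open, since \(f\mapsto\langle f,\phi_1\rangle\) is a continuous linear functional. It is dense, since the complement is the closed hyperplane \(\phi_1^\perp\), which has empty interior: for any \(f\in\phi_1^\perp\), the functions \(f+\delta\phi_1\) lie in \(\mathcal S\) for every \(\delta\neq0\).

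Fix \(f\in\mathcal S\) and write \(a=\langle f,\phi_1\rangle\neq0\). The spectral decomposition gives
\[
f=f_0+a\phi_1+f_{\ge2},
\]
with \(f_0\) constant. The heat solution is then
\[
w(\cdot,t)=f_0+ae^{-\lambda_1t}\phi_1+\sum_{k\ge2}e^{-\lambda_kt}f_k,
\]
and therefore
\[
e^{\lambda_1t}\bigl(w(\cdot,t)-f_0\bigr)\to a\phi_1 \quad\text{in } C^\infty(N), \text{ and hence in } C^2(N).
\]
The justification of \(C^\infty\) convergence is the same as in \Cref{thm:partI-obstruction}. The tail is bounded in \(H^s\) by \(e^{-(\lambda_2-\lambda_1)t}\) times a quantity controlled by \(\|f\|_{L^2}\) and polynomial growth in \(\lambda_k\); this holds for \(t\ge1\) by parabolic smoothing, since \(\lambda_k^{s}e^{-\lambda_k}\) is bounded. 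Sobolev embedding then upgrades this to \(C^2\) convergence. Here the spectral gap \(\lambda_2>\lambda_1\) is essential, and it is exactly where hypothesis 1 enters.

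Next, \(a\phi_1\) is Morse with the same critical set as \(\phi_1\), because \(a\neq0\); the sign of \(a\) only swaps indices \(i\leftrightarrow 3-i\) (more generally \(i\leftrightarrow n-i\)). So \(a\phi_1\) is minimal Morse. I will interpret minimality as having the minimal number of critical points among Morse functions on \(N\), as in \Cref{prop:partI-minimal-six}, and this notion is clearly invariant under \(u\mapsto -u\). By \(C^2\)-openness of the Morse condition and local constancy of the number of critical points, there is a \(C^2\)-neighborhood \(\mathcal U\) of \(a\phi_1\) consisting of Morse functions with exactly \(\#\Crit(\phi_1)\) critical points, all of which are therefore minimal. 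For large \(t\), the rescaled solution lies in \(\mathcal U\). Undoing the affine change of the dependent variable (subtracting a constant and multiplying by \(e^{-\lambda_1t}>0\)) preserves the critical set and nondegeneracy, so \(w(\cdot,t)\) is minimal Morse for all sufficiently large \(t\).

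The only delicate point is the local constancy of the critical-point count under \(C^2\)-small perturbations on a compact manifold. I would prove it as follows. Nondegenerate critical points persist uniquely in small balls by the implicit function theorem. Outside these balls, \(|\nabla(a\phi_1)|\) is bounded below by compactness, so a \(C^1\)-small perturbation creates no new critical points there. A \(C^2\)-small perturbation also keeps the Hessians nondegenerate inside the balls. This step was already used in \Cref{thm:partI-obstruction}, so it can be cited rather than reproved.
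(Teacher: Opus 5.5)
Your proposal is correct and follows the paper's proof essentially step for step. It uses the same open dense set $\mathcal S=\{f:\Pi_1 f\neq 0\}$, which coincides with your $\{\langle f,\phi_1\rangle\neq 0\}$ because $E_{\lambda_1}(g)$ is a line, the same rescaled spectral convergence $e^{\lambda_1 t}(u(\cdot,t)-f_0)\to a\phi_1$ in $C^2$, and the same $C^2$-stability of the Morse property and the critical-point count; the only difference is that you spell out the parabolic-smoothing and Sobolev estimate behind the $C^\infty$ convergence, which the paper simply asserts.
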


\begin{proof}
Since \(E_{\lambda_1}(g)\) is one-dimensional, every nonzero element of
\(E_{\lambda_1}(g)\) is of the form \(c\phi_1\) with \(c\neq0\). Multiplication
by a nonzero scalar does not change the critical set, Morse character, or the
number of critical points, so every nonzero vector in \(E_{\lambda_1}(g)\) is
again minimal Morse.

Let
\[
\Pi_1:L^2(N,g)\to E_{\lambda_1}(g)
\]
be the orthogonal projection, and define
\[
\mathcal S=\{f\in L^2(N,g):\Pi_1f\neq0\}.
\]
Since \(\ker \Pi_1\) is a closed proper subspace, \(\mathcal S\) is open and
dense in \(L^2(N,g)\).

Fix \(f\in\mathcal S\), and let \(u(\cdot,t)\) be the heat evolution with
initial condition \(f\). Write
\[
f=f_0+c\phi_1+f_\perp,
\]
where \(f_0\in E_{\lambda_0}(g)\), \(c\neq0\), and
\[
f_\perp\in \overline{\bigoplus_{k\ge2}E_{\lambda_k}(g)}.
\]
Then
\[
u(\cdot,t)=f_0+ce^{-\lambda_1t}\phi_1+\sum_{k\ge2}e^{-\lambda_kt}f_k,
\]
hence
\[
e^{\lambda_1t}\bigl(u(\cdot,t)-f_0\bigr)\longrightarrow c\phi_1
\qquad\text{in }C^\infty(N)
\]
as \(t\to\infty\), in particular in \(C^2(N)\).

Because \(c\phi_1\) is minimal Morse, \(C^2\)-stability of nondegenerate
critical points gives a \(C^2\)-neighborhood \(\mathcal U\) of \(c\phi_1\)
such that every \(h\in\mathcal U\) is minimal Morse. For all sufficiently
large \(t\),
\[
e^{\lambda_1t}\bigl(u(\cdot,t)-f_0\bigr)\in\mathcal U.
\]
Subtracting the constant \(f_0\) and multiplying by the positive scalar
\(e^{-\lambda_1t}\) do not change the critical set or Morse character.
Therefore \(u(\cdot,t)\) is minimal Morse for all sufficiently large \(t\).

Thus every \(f\in\mathcal S\) satisfies the defining property of \(P\), and
\(\mathcal S\) is open and dense. Therefore \((N,g)\) has property \(P\).
\end{proof}

The perturbative step also requires stability of a simple first eigenfunction.

\begin{theorem}[Stability of a simple Morse first eigenfunction]
\label{thm:partII-first-eigenfunction-stability}
Let \(N\) be a smooth closed connected manifold, and let \(g_*\) be a smooth
Riemannian metric on \(N\). Assume that:
\begin{enumerate}[label=\arabic*.]
\item \(\lambda_1(g_*)\) is simple;
\item an \(L^2(N,g_*)\)-normalized first eigenfunction \(\phi_*\) is Morse
with exactly \(m\) critical points.
\end{enumerate}
Then there exists a \(C^\infty\)-neighborhood \(\mathcal U\) of \(g_*\) such
that, for every \(h\in\mathcal U\),
\begin{enumerate}[label=\arabic*.]
\item \(\lambda_1(h)\) is simple;
\item the corresponding normalized first eigenfunction \(\phi_h\) is Morse;
\item \(\phi_h\) has exactly \(m\) critical points.
\end{enumerate}
\end{theorem}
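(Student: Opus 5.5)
The strategy is perturbation theory for an isolated simple eigenvalue, followed by $C^2$-stability of Morse functions. Since $\lambda_1(g_*)$ is simple, there is a spectral gap
\[
0<\lambda_1(g_*)<\lambda_2(g_*),
\qquad
\delta=\tfrac13\min\{\lambda_1(g_*),\,\lambda_2(g_*)-\lambda_1(g_*)\}>0 .
\]
First I will show that eigenvalues depend continuously on the metric in the $C^\infty$ (indeed $C^1$) topology. The cleanest route is the min--max characterization. If $h$ is $C^0$-close to $g_*$, the Rayleigh quotients satisfy
\[
R_h(u)\in\bigl[(1-\eta)R_{g_*}(u),\,(1+\eta)R_{g_*}(u)\bigr],
\]
because the volume forms and the induced metrics on $T^*N$ are comparable up to a factor $1\pm\eta$. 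Here $\eta\to0$ as $h\to g_*$. Hence $\lambda_k(h)\to\lambda_k(g_*)$ for each $k$. For $h$ near $g_*$ this gives three conclusions:
\begin{itemize}
\item exactly one eigenvalue of $\Delta_h$ lies in $(\lambda_1(g_*)-\delta,\lambda_1(g_*)+\delta)$;
\item it is $\lambda_1(h)$;
\item $\lambda_2(h)>\lambda_1(g_*)+\delta$.
\end{itemize}
This proves conclusion 1.

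Next I will control the eigenfunction. Let $\Gamma$ be the circle of radius $\delta$ about $\lambda_1(g_*)$. The Riesz projection
\[
\Pi_h=-\frac{1}{2\pi i}\oint_\Gamma(\Delta_h-z)^{-1}\,dz
\]
is rank one for all $h$ near $g_*$. To compare operators acting on different $L^2$ spaces, I will view all of them on the fixed Sobolev spaces $H^s(N)$. The coefficients of $\Delta_h$ depend continuously on $h$ in $C^\infty$, so $\Delta_h-\Delta_{g_*}\to0$ as a map $H^{s+2}\to H^s$. On $\Gamma$ the resolvent $(\Delta_{g_*}-z)^{-1}:H^s\to H^{s+2}$ is uniformly bounded, so a Neumann series shows that $(\Delta_h-z)^{-1}\to(\Delta_{g_*}-z)^{-1}$ in operator norm $H^s\to H^{s+2}$, uniformly in $z\in\Gamma$. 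Therefore $\Pi_h\to\Pi_{g_*}$ in $\mathcal L(H^s,H^{s+2})$ for every $s$.

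Set $\psi_h=\Pi_h\phi_*$. Then $\psi_h\to\phi_*$ in every $H^s$, hence in $C^2$ by Sobolev embedding, since $s>3/2+\dim N/2$ suffices. Normalizing in $L^2(N,h)$ gives $\phi_h=\psi_h/\|\psi_h\|_{L^2(h)}$, with the sign fixed by $\langle\phi_h,\phi_*\rangle>0$. Because $\|\psi_h\|_{L^2(h)}\to1$, we get $\phi_h\to\phi_*$ in $C^2$.

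The final step uses $C^2$-stability of Morse functions on a compact manifold. Around each of the $m$ nondegenerate critical points of $\phi_*$, choose disjoint small balls $B_j$ on which $\Hess\phi_*$ is uniformly invertible. The inverse function theorem, applied to $\nabla\phi$ in coordinates, shows that any $\phi$ sufficiently $C^2$-close to $\phi_*$ has exactly one critical point in each $B_j$, and that it is nondegenerate. On the compact complement of $\bigcup_j B_j$, $|\nabla\phi_*|\ge c>0$, so $\phi$ has no critical points there. Thus $\phi_h$ is Morse with exactly $m$ critical points, giving conclusions 2 and 3. The count does not depend on the normalization or sign of $\phi_h$.

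The main obstacle is the middle step. The Laplacians act on $L^2(N,h)$ spaces whose inner products vary with $h$, so resolvent convergence must be phrased on fixed Sobolev spaces, together with uniform elliptic estimates for the family $\{\Delta_h\}$. Once one observes that these estimates hold with constants uniform in a $C^k$-neighborhood of $g_*$, the argument reduces to standard Kato-type perturbation of an isolated simple eigenvalue.
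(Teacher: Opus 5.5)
Your argument is correct and follows essentially the same route as the paper: continuity of the simple first eigenpair under metric perturbation gives $\phi_h\to\phi_*$ in $C^2$, and then the same disjoint-balls/compact-complement argument yields exactly $m$ nondegenerate critical points. The difference is that you prove the perturbation step where the paper simply cites Kato for smooth dependence: you use min--max for simplicity, and Riesz projections on the fixed spaces $H^s(N)$, which explicitly handles the $h$-dependence of $L^2(N,h)$. One harmless slip: $H^s\subset C^2$ needs $s>2+\dim N/2$, not $s>3/2+\dim N/2$, but since you obtain convergence in $H^{s+2}$ (indeed in every $H^s$) this does not affect the proof.
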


\begin{proof}
Simplicity of \(\lambda_1(g_*)\) implies, by analytic perturbation theory for
self-adjoint elliptic operators, that after fixing sign continuously the
normalized first eigenfunction depends smoothly on the metric in a
\(C^\infty\)-neighborhood of \(g_*\); see \cite[Chap.~VII]{Kato}. In
particular,
\[
\phi_h\to \phi_*
\qquad\text{in }C^\infty(N)
\]
as \(h\to g_*\), hence also in \(C^2(N)\).

Let
\[
\Crit(\phi_*)=\{p_1,\dots,p_m\}.
\]
Choose pairwise disjoint coordinate balls
\[
U_1,\dots,U_m
\]
with \(p_j\in U_j\), so small that \(\phi_*\) has exactly one critical point in
each \(U_j\), namely \(p_j\), and that point is nondegenerate. By
\(C^2\)-stability of nondegenerate critical points, if \(h\) is sufficiently
close to \(g_*\), then \(\phi_h\) has exactly one nondegenerate critical point
in each \(U_j\).

Now let
\[
K=N\setminus \bigcup_{j=1}^m U_j.
\]
Since \(K\) is compact and contains no critical point of \(\phi_*\), there is
\(c>0\) such that
\[
|\nabla \phi_*|_{g_*}\ge c
\qquad\text{on }K.
\]
Because \(\phi_h\to\phi_*\) in \(C^1\), after shrinking the neighborhood if
necessary we have
\[
|\nabla \phi_h|_{g_*}\ge \frac c2>0
\qquad\text{on }K.
\]
Therefore \(\phi_h\) has no critical points on \(K\). Hence \(\phi_h\) has
exactly one critical point in each \(U_j\) and no others, so it is Morse with
exactly \(m\) critical points. Openness of simplicity for isolated
self-adjoint eigenvalues yields the first assertion.
\end{proof}

For the Poincar\'e dodecahedral space, Part~I identifies minimality
numerically.

\begin{corollary}
\label{cor:partII-six-critical-points}
Let \(M=S^3/I^*\). If a Morse function on \(M\) has exactly six critical
points, then it is minimal.
\end{corollary}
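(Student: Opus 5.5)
This is an immediate consequence of \Cref{prop:partI-minimal-six}, which states that a Morse function on \(M\) is minimal if and only if it has exactly six critical points. The corollary only needs the reverse implication, from six critical points to minimality. So the plan is to quote that direction and check that nothing further is required.

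For completeness, I would recall why six is the lower bound, so that a Morse function attaining it is minimal in the sense of having the least possible number of critical points.

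\begin{enumerate}[label=\arabic*.]
\item Let \(f\) be any Morse function on \(M\). By standard Morse theory, after cancelling extra index-\(0\) and index-\(3\) critical points, \(f\) can be replaced by a self-indexing Morse function with a single minimum and a single maximum and no more critical points than \(f\).
\item Such a function induces a Heegaard splitting whose genus \(h\) equals the number of index-\(1\) critical points, which is also the number of index-\(2\) critical points. Hence \(f\) has at least \(2h+2\) critical points.
\item Since \(M\) has Heegaard genus \(2\) \cite{Schultens,Boileau}, we have \(h\ge 2\). Therefore every Morse function on \(M\) has at least \(6\) critical points.
\end{enumerate}

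A Morse function with exactly six critical points therefore realizes this minimum and is minimal.

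There is no real obstacle here. The only point worth watching is the cancellation step: it must not increase the critical-point count. That is guaranteed by the standard references \cite[Chap.~6]{Schultens} and \cite[Chap.~3]{Matsumoto}, already cited in the proof of \Cref{prop:partI-minimal-six}. The proof of the corollary is thus a single line invoking that proposition.
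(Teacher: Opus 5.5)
Your proposal is correct and matches the paper, which proves the corollary in one line by citing \Cref{prop:partI-minimal-six}. Your extra sketch of the lower bound is sound: you cancel surplus minima and maxima against index-\(1\) and index-\(2\) points, then use \(c_1=c_2=h\ge 2\). It also makes explicit the reduction to a single minimum and maximum that the paper's proof of that proposition leaves implicit.
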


\begin{proof}
This is \Cref{prop:partI-minimal-six}.
\end{proof}

\section{First-order conformal splitting on the spherical first eigenspace}
\label{sec:partII-spherical-splitting}

We now specialize to the spherical metric \(g_0=g_{\mathrm{sph}}\) on
\[
M=S^3/I^*.
\]
Let
\[
\lambda_{\mathrm{sph}}=\lambda_1(g_0)=168,
\qquad
E=E_{\lambda_{\mathrm{sph}}}(g_0).
\]
By Part~I, \(\dim E=13\).

Consider a conformal perturbation
\[
g_\varepsilon=e^{2\varepsilon\rho}g_0,
\qquad
\rho\in C^\infty(M).
\]
Fix a real \(L^2(M,g_0)\)-orthonormal basis
\[
\phi_0,\dots,\phi_{12}
\]
of \(E\).

For \(u,v\in C^\infty(M)\), define
\[
a_\varepsilon(u,v)
=
\int_M \langle \nabla u,\nabla v\rangle_{g_\varepsilon}\,dV_{g_\varepsilon},
\qquad
m_\varepsilon(u,v)
=
\int_M uv\,dV_{g_\varepsilon}.
\]
The eigenvalue problem for \(-\Delta_{g_\varepsilon}\) is equivalent to
\[
a_\varepsilon(u,v)=\lambda(\varepsilon)\,m_\varepsilon(u,v)
\qquad
\forall v\in C^\infty(M).
\]

Since \(\dim M=3\),
\[
dV_{g_\varepsilon}=e^{3\varepsilon\rho}\,dV_{g_0},
\qquad
\langle \nabla u,\nabla v\rangle_{g_\varepsilon}
=
e^{-2\varepsilon\rho}\langle \nabla u,\nabla v\rangle_{g_0},
\]
so
\[
a_\varepsilon(u,v)
=
\int_M e^{\varepsilon\rho}\langle \nabla u,\nabla v\rangle_{g_0}\,dV_{g_0},
\qquad
m_\varepsilon(u,v)
=
\int_M e^{3\varepsilon\rho}uv\,dV_{g_0}.
\]
Differentiating at \(\varepsilon=0\),
\[
a_0'(u,v)
=
\int_M \rho\,\langle \nabla u,\nabla v\rangle_{g_0}\,dV_{g_0},
\qquad
m_0'(u,v)
=
3\int_M \rho\,uv\,dV_{g_0}.
\]

\begin{proposition}[First-order splitting formula]
\label{prop:partII-splitting-formula}
For every \(\rho\in C^\infty(M)\), the first-order splitting operator on
\[
E=E_{\lambda_{\mathrm{sph}}}(g_0)
\]
is represented in the basis \(\phi_0,\dots,\phi_{12}\) by the symmetric matrix
\[
B^{(\rho)}=(B_{ij}^{(\rho)})_{0\le i,j\le 12},
\qquad
B_{ij}^{(\rho)}
=
a_0'(\phi_i,\phi_j)-\lambda_{\mathrm{sph}}\,m_0'(\phi_i,\phi_j),
\]
and one has the explicit formula
\[
B_{ij}^{(\rho)}
=
-2\lambda_{\mathrm{sph}}\int_M \rho\,\phi_i\phi_j\,dV_{g_0}
-\frac12\int_M (\Delta_{g_0}\rho)\,\phi_i\phi_j\,dV_{g_0}.
\]
Equivalently, if
\[
q_\rho
=
2\lambda_{\mathrm{sph}}\rho+\frac12\Delta_{g_0}\rho,
\]
then
\[
B_{ij}^{(\rho)}
=
-\int_M q_\rho\,\phi_i\phi_j\,dV_{g_0}.
\]
\end{proposition}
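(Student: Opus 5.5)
The plan has two parts. First, justify that the first-order splitting of the $13$-fold eigenvalue $\lambda_{\mathrm{sph}}$ is governed by the compressed form $a_0'-\lambda_{\mathrm{sph}}m_0'$ on $E$. Second, rewrite that form by integrating by parts, using the eigenfunction equation.

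For the first part, $g_\varepsilon$ depends analytically on $\varepsilon$, so the generalized eigenvalue problem $a_\varepsilon(u,v)=\lambda(\varepsilon)m_\varepsilon(u,v)$ is a real-analytic family of self-adjoint problems. By Rellich--Kato degenerate perturbation theory \cite[Chap.~VII]{Kato}, the $13$ branches emanating from $\lambda_{\mathrm{sph}}$ can be chosen analytic, $\lambda_k(\varepsilon)$ and $u_k(\varepsilon)$, with $u_k(0)\in E$.

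Differentiate $a_\varepsilon(u_k,v)=\lambda_k m_\varepsilon(u_k,v)$ at $\varepsilon=0$ and test against $v=\phi_i\in E$. The terms containing $u_k'(0)$ cancel, because $a_0(w,\phi_i)=\lambda_{\mathrm{sph}}m_0(w,\phi_i)$ for every $w$. What remains is
\[
a_0'(u_k(0),\phi_i)-\lambda_{\mathrm{sph}}m_0'(u_k(0),\phi_i)=\lambda_k'(0)\,m_0(u_k(0),\phi_i).
\]
Since $m_0$ is the identity in the orthonormal basis $\phi_0,\dots,\phi_{12}$, this says that $\lambda_k'(0)$ and $u_k(0)$ are the eigenpairs of the symmetric matrix $B_{ij}^{(\rho)}=a_0'(\phi_i,\phi_j)-\lambda_{\mathrm{sph}}m_0'(\phi_i,\phi_j)$. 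This establishes the first claim; symmetry of $B^{(\rho)}$ is immediate from the symmetry of $a_0'$ and $m_0'$.

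For the explicit formula, I fix the sign convention dictated by the heat equation $\partial_tu=-\Delta_gu$ and by the decay used in \Cref{thm:partI-obstruction}. Here $\Delta_{g_0}$ is the nonnegative Laplacian, so $\Delta_{g_0}\phi_i=\lambda_{\mathrm{sph}}\phi_i$ and $\Delta_{g_0}(uv)=u\Delta_{g_0}v+v\Delta_{g_0}u-2\langle\nabla u,\nabla v\rangle$. Taking $u=\phi_i$, $v=\phi_j$ gives
\[
\langle\nabla\phi_i,\nabla\phi_j\rangle_{g_0}=\lambda_{\mathrm{sph}}\phi_i\phi_j-\tfrac12\Delta_{g_0}(\phi_i\phi_j).
\]
Multiply by $\rho$ and integrate over the closed manifold $M$, using self-adjointness $\int\rho\,\Delta_{g_0}(\phi_i\phi_j)=\int(\Delta_{g_0}\rho)\phi_i\phi_j$. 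This yields
\[
a_0'(\phi_i,\phi_j)=\lambda_{\mathrm{sph}}\int_M\rho\,\phi_i\phi_j\,dV_{g_0}-\tfrac12\int_M(\Delta_{g_0}\rho)\phi_i\phi_j\,dV_{g_0}.
\]
Subtracting $\lambda_{\mathrm{sph}}m_0'(\phi_i,\phi_j)=3\lambda_{\mathrm{sph}}\int\rho\,\phi_i\phi_j$ gives the coefficient $-2\lambda_{\mathrm{sph}}$ on $\int\rho\,\phi_i\phi_j$ together with $-\tfrac12\int(\Delta_{g_0}\rho)\phi_i\phi_j$. This is exactly the stated formula, and collecting terms gives $B_{ij}^{(\rho)}=-\int_M q_\rho\,\phi_i\phi_j\,dV_{g_0}$.

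The main point needing care is the sign convention for $\Delta_{g_0}$. With the analyst's Laplacian the $\Delta\rho$ term would flip sign, so I would state the convention explicitly. The justification of degenerate perturbation theory is standard and can simply be cited.
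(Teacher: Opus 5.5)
Your proposal is correct, and the explicit formula is derived the same way as in the paper. In both, you rewrite $a_0'(\phi_i,\phi_j)=\int_M\rho\langle\nabla\phi_i,\nabla\phi_j\rangle\,dV_{g_0}$ using the product rule for the Laplacian, integrate over the closed manifold, and use $\Delta_{g_0}\phi_i=\lambda_{\mathrm{sph}}\phi_i$. This gives $\lambda_{\mathrm{sph}}\int_M\rho\phi_i\phi_j-\frac12\int_M(\Delta_{g_0}\rho)\phi_i\phi_j$, from which you subtract $\lambda_{\mathrm{sph}}m_0'$.

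Your sign handling is the consistent one. You use $\Delta(uv)=u\Delta v+v\Delta u-2\langle\nabla u,\nabla v\rangle$, which is correct for the nonnegative Laplacian. The paper instead displays $\Delta_{g_0}(\rho\phi_i\phi_j)=(\Delta_{g_0}\rho)\phi_i\phi_j+\rho\Delta_{g_0}(\phi_i\phi_j)+2\langle\nabla\rho,\nabla(\phi_i\phi_j)\rangle$, which carries the analyst's sign. Yet its final formula, and the later positivity of $\frac12\Delta_{g_0}+2\lambda_{\mathrm{sph}}$, require the nonnegative convention that you adopt.

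Where you genuinely differ is your first part. The paper's proof of this proposition treats the compressed form $a_0'-\lambda_{\mathrm{sph}}m_0'$ as the definition of the splitting operator. Its role is justified only later, in \Cref{prop:partII-cluster-reduction}. There the paper conjugates to the fixed space $L^2(M,g_0)$ via $U_\varepsilon f=e^{\frac32\varepsilon\rho}f$, transports the rank-$13$ Riesz projection with Kato's unitaries $W_\varepsilon$, and shows $T_0'=B^{(\rho)}$.

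Your Rellich--Kato/Hellmann--Feynman argument reaches the same identification more directly. The $u_k'(0)$ terms cancel because $a_0(w,\phi_i)=\lambda_{\mathrm{sph}}m_0(w,\phi_i)$ for every $w$. This route costs two things you should state explicitly:
\begin{itemize}
\item Rellich's theorem must be applied on a fixed Hilbert space. You need either the same $U_\varepsilon$ conjugation or Kato's type~(B) theory for the pair $(a_\varepsilon,m_\varepsilon)$.
\item You must note that the $u_k(0)$ can be chosen $m_0$-orthonormal, so they form a basis of $E$. Without this you only know that certain vectors are eigenvectors of $B^{(\rho)}$, not that $B^{(\rho)}$ is the full splitting operator.
\end{itemize}
The paper's route needs only smooth dependence on $\varepsilon$. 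It yields an operator-norm expansion of the whole compressed operator, $T_\varepsilon=168\,I_E+\varepsilon B^{(\rho)}+o(\varepsilon)$, which is exactly the form used in \Cref{thm:partII-branch-selection}. Your analytic branches would also give that theorem's eigenvector convergence directly.
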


\begin{proof}
Since each \(\phi_j\in E\) satisfies
\[
\Delta_{g_0}\phi_j=\lambda_{\mathrm{sph}}\phi_j,
\]
one computes, using the identity
\[
\Delta_{g_0}(\rho\phi_i\phi_j)
=
(\Delta_{g_0}\rho)\phi_i\phi_j
+\rho\,\Delta_{g_0}(\phi_i\phi_j)
+2\langle \nabla\rho,\nabla(\phi_i\phi_j)\rangle_{g_0},
\]
integration over the closed manifold \(M\), and the eigenvalue equations for
\(\phi_i,\phi_j\), that
\[
\int_M \rho\,\langle \nabla\phi_i,\nabla\phi_j\rangle_{g_0}\,dV_{g_0}
=
\lambda_{\mathrm{sph}}\int_M \rho\,\phi_i\phi_j\,dV_{g_0}
-\frac12\int_M (\Delta_{g_0}\rho)\,\phi_i\phi_j\,dV_{g_0}.
\]
Substituting into the definition of \(B_{ij}^{(\rho)}\) gives the formula.
\end{proof}

\section{The realizable splitting space}
\label{sec:partII-realizable-splitting-space}

For the first-order splitting problem, it is convenient to work with the
product space
\[
\mathcal P=\operatorname{span}\{fg:f,g\in E\}\subset L^2(M,g_0).
\]
For each \(q\in C^\infty(M)\), define the symmetric operator \(B(q)\in\Sym(E)\)
by
\[
\langle B(q)f,g\rangle_{L^2(M,g_0)}
=
-\int_M q\,fg\,dV_{g_0}
\qquad (f,g\in E).
\]
Since only the pairings with products \(fg\) matter, the operator \(B(q)\)
depends only on the \(L^2(M,g_0)\)-projection of \(q\) onto \(\mathcal P\).
Hence the image of the map \(q\mapsto B(q)\) is the finite-dimensional space
\[
\mathscr B=B(\mathcal P)\subset \Sym(E).
\]

\begin{remark}
By \Cref{prop:partII-splitting-formula}, if
\[
q_\rho=2\lambda_{\mathrm{sph}}\rho+\frac12\Delta_{g_0}\rho,
\]
then the first-order splitting operator associated with the conformal factor
\(\rho\) is precisely
\[
B^{(\rho)}=B(q_\rho)=B(\Pi_{\mathcal P}q_\rho)\in\mathscr B,
\]
where \(\Pi_{\mathcal P}\) denotes the \(L^2(M,g_0)\)-orthogonal projection
onto \(\mathcal P\). Thus \(\mathscr B\) is exactly the finite-dimensional
operator space generated by first-order conformal splittings.
\end{remark}

\begin{lemma}
\label{lem:partII-B-on-P-injective}
The restriction
\[
B|_{\mathcal P}:\mathcal P\to \mathscr B
\]
is injective.
\end{lemma}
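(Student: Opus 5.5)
The proof is a direct duality argument on the finite-dimensional space \(\mathcal P\). First note that \(B\) is linear in \(q\). So it suffices to show that if \(q\in\mathcal P\) satisfies \(B(q)=0\), then \(q=0\). By the defining relation of \(B(q)\), the condition \(B(q)=0\) means
\[
\int_M q\,fg\,dV_{g_0}=0\qquad\text{for all } f,g\in E .
\]
In words, \(q\) is \(L^2(M,g_0)\)-orthogonal to every product \(fg\) with \(f,g\in E\).

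The next step uses linearity and continuity of the \(L^2\) pairing. Orthogonality to each generator \(fg\) extends to orthogonality to every finite linear combination of such products. These combinations are exactly the elements of \(\mathcal P=\operatorname{span}\{fg\}\). No closure issue arises, because \(E\) is \(13\)-dimensional. Hence \(\mathcal P\) is spanned by the finitely many products \(\phi_i\phi_j\), \(0\le i\le j\le 12\), and is a finite-dimensional, hence closed, subspace of \(L^2(M,g_0)\). So \(q\in\mathcal P\cap\mathcal P^{\perp}=\{0\}\).

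There is no real obstacle; the only points to state carefully are the following.

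\begin{enumerate}[label=\arabic*.]
\item The products \(fg\) are smooth, since \(E\) consists of smooth eigenfunctions. They therefore lie in \(L^2\), and the pairing \(\int_M q\,fg\) is the \(L^2\) inner product \(\langle q,fg\rangle\).
\item \(\mathcal P\) is finite-dimensional, so it equals its closure.
\end{enumerate}

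Equivalently, choose a basis \(\psi_1,\dots,\psi_N\) of \(\mathcal P\) consisting of products \(\phi_i\phi_j\) and write \(q=\sum c_k\psi_k\). The condition \(B(q)=0\) forces \(\sum_k c_k\langle\psi_k,\psi_l\rangle=0\) for all \(l\). The Gram matrix of \(\psi_1,\dots,\psi_N\) is positive definite, so \(c=0\).

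As a byproduct, \(B|_{\mathcal P}\) is a linear isomorphism onto \(\mathscr B\), and \(\dim\mathscr B=\dim\mathcal P\). This dimension count is likely what the subsequent realizability and submersion arguments will use.
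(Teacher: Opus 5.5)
Your argument is correct and is essentially the paper's proof: \(B(q)=0\) makes \(q\) orthogonal to every product \(fg\) with \(f,g\in E\), hence by linearity to all of \(\mathcal P\), and \(q\in\mathcal P\) then forces \(q=0\). The Gram-matrix reformulation and the closure remark are harmless extras. Closure is in fact never needed, since \(q\in\mathcal P\) and \(q\perp\mathcal P\) give \(\langle q,q\rangle=0\) directly.
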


\begin{proof}
Assume \(q\in\mathcal P\) and \(B(q)=0\). Then for every \(f,g\in E\),
\[
0=\langle B(q)f,g\rangle_{L^2(M,g_0)}
=
-\int_M q\,fg\,dV_{g_0}.
\]
Since
\[
\mathcal P=\operatorname{span}\{fg:f,g\in E\},
\]
it follows that
\[
\int_M q\,h\,dV_{g_0}=0
\qquad\forall h\in\mathcal P.
\]
Thus \(q\) is \(L^2(M,g_0)\)-orthogonal to \(\mathcal P\). Because \(q\in\mathcal P\),
we conclude that \(q=0\). Hence \(B|_{\mathcal P}\) is injective.
\end{proof}

\begin{proposition}[Every operator in \(\mathscr B\) is conformally realizable]
\label{prop:partII-every-B-realizable-by-rho}
For every operator
\[
A\in \mathscr B,
\]
there exists a smooth function
\[
\rho\in C^\infty(M)
\]
such that
\[
B^{(\rho)}=A.
\]
Equivalently, the map
\[
C^\infty(M)\longrightarrow \mathscr B,
\qquad
\rho\longmapsto B^{(\rho)},
\]
is surjective.

Moreover, if \(q_A\in\mathcal P\) denotes the unique element satisfying
\[
A=B(q_A),
\]
then there exists a unique smooth function
\[
\rho_A\in C^\infty(M)
\]
such that
\[
2\lambda_{\mathrm{sph}}\rho_A+\frac12\Delta_{g_0}\rho_A=q_A.
\]
For this canonical choice one has
\[
B^{(\rho_A)}=A.
\]
\end{proposition}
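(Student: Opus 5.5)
The plan is to reduce the statement to the invertibility of the elliptic operator
\[
L=2\lambda_{\mathrm{sph}}+\tfrac12\Delta_{g_0}
\]
on \(C^\infty(M)\). Then we combine this with the definition of \(\mathscr B\) as \(B(\mathcal P)\) and with \Cref{lem:partII-B-on-P-injective}.

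First, fix \(A\in\mathscr B\). By definition of \(\mathscr B\) there is some \(q\in\mathcal P\) with \(B(q)=A\). By \Cref{lem:partII-B-on-P-injective} this \(q\) is unique; call it \(q_A\). Since \(\mathcal P\) is spanned by products of smooth eigenfunctions, \(q_A\) is a finite sum of smooth functions and hence lies in \(C^\infty(M)\).

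Second, I will show that \(L\) is a bijection of \(C^\infty(M)\), working spectrally. With the paper's sign convention \(\Delta_{g_0}\phi=\lambda\phi\) on eigenfunctions, where \(\lambda\in\{0,168,\dots\}=\{k(k+2)\}\), the operator \(L\) acts on the \(\lambda\)-eigenspace by the scalar
\[
2\cdot168+\tfrac12\lambda\ \ge\ 336>0.
\]
Hence \(L\) has trivial kernel. Its inverse is given on each eigenspace by multiplication by \((336+\lambda/2)^{-1}\). This is a bounded operator \(H^s\to H^{s+2}\) for every \(s\), so it preserves \(C^\infty\) by Sobolev embedding; equivalently, one may invoke standard elliptic regularity.

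The one step needing care is the sign convention. Read literally with the geometer's \(\Delta\le0\), the scalars would instead be \(336-\lambda/2\), and these vanish when \(\lambda=672\). That value does occur as \(k(k+2)\) for \(k=\sqrt{673}-1\)? It does not, since \(673\) is not a perfect square, so \(L\) would still be injective. In fact the quotient spectrum consists of eigenvalues \(k(k+2)\) with \(k\) even, and \(k(k+2)=672\) has no integer solution. So in either convention \(L\) is invertible, and I will note both cases.

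Third, set \(\rho_A=L^{-1}q_A\in C^\infty(M)\). This \(\rho_A\) satisfies the stated equation, and it is unique by injectivity of \(L\). Then \(q_{\rho_A}=q_A\). By \Cref{prop:partII-splitting-formula} and the subsequent remark,
\[
B^{(\rho_A)}=B(q_{\rho_A})=B(q_A)=A.
\]
This proves surjectivity of \(\rho\mapsto B^{(\rho)}\) onto \(\mathscr B\) together with the canonical choice. The main (mild) obstacle is the invertibility and smoothness of \(L\), which is settled above by the spectral argument.
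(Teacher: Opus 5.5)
Your proposal is correct and follows the paper's proof essentially step for step. You take uniqueness and smoothness of \(q_A\) from \Cref{lem:partII-B-on-P-injective} and from \(\mathcal P\) being spanned by products of smooth eigenfunctions. You get invertibility of \(L=\frac12\Delta_{g_0}+2\lambda_{\mathrm{sph}}\) on \(C^\infty(M)\) from its spectrum \(\frac12\mu+2\lambda_{\mathrm{sph}}>0\), and you conclude \(B^{(\rho_A)}=B(q_{\rho_A})=B(q_A)=A\) via \Cref{prop:partII-splitting-formula}. Your extra check that \(672\) is not of the form \(k(k+2)\) is a useful safeguard that the paper lacks, since elsewhere the paper treats \(-\Delta_g\), rather than \(\Delta_g\), as the nonnegative operator.
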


\begin{proof}
Recall from \Cref{prop:partII-splitting-formula} that
\[
B^{(\rho)}=B(q_\rho),
\qquad
q_\rho=2\lambda_{\mathrm{sph}}\rho+\frac12\Delta_{g_0}\rho.
\]

Let
\[
A\in\mathscr B.
\]
By definition of \(\mathscr B=B(\mathcal P)\), there exists \(q_A\in\mathcal P\)
such that
\[
A=B(q_A).
\]
By \Cref{lem:partII-B-on-P-injective}, this \(q_A\) is unique. Since
\(\mathcal P\) is spanned by products of smooth eigenfunctions in \(E\), every
element of \(\mathcal P\) is smooth. In particular,
\[
q_A\in C^\infty(M).
\]

Consider the elliptic operator
\[
L=\frac12\Delta_{g_0}+2\lambda_{\mathrm{sph}}.
\]
Since \(\Delta_{g_0}\) is self-adjoint with nonnegative spectrum in our sign
convention, every eigenvalue of \(L\) has the form
\[
\frac12\mu+2\lambda_{\mathrm{sph}},
\qquad
\mu\in\Spec(\Delta_{g_0}),
\]
and is therefore strictly positive. Hence \(0\notin\Spec(L)\), so
\[
L:C^\infty(M)\to C^\infty(M)
\]
is an isomorphism.

Therefore there exists a unique smooth function \(\rho_A\) satisfying
\[
L\rho_A=q_A,
\qquad\text{i.e.}\qquad
2\lambda_{\mathrm{sph}}\rho_A+\frac12\Delta_{g_0}\rho_A=q_A.
\]
Thus
\[
q_{\rho_A}=q_A.
\]
Applying \Cref{prop:partII-splitting-formula},
\[
B^{(\rho_A)}=B(q_{\rho_A})=B(q_A)=A.
\]
This proves the asserted surjectivity and the uniqueness of the canonical choice
\(\rho_A\) satisfying \(q_{\rho_A}=q_A\).
\end{proof}

\medskip

\noindent\textbf{The seed operator.}

Fix the base point
\[
o=eI^*\in M=SU(2)/I^*.
\]

Since \(\mathcal P\subset L^2(M,g_0)\) is finite-dimensional, the evaluation
functional at \(o\),
\[
\operatorname{ev}_o:\mathcal P\to\R,
\qquad
\operatorname{ev}_o(h)=h(o),
\]
is continuous. By the Riesz representation theorem, there exists a unique
element
\[
q_o\in\mathcal P
\]
such that
\[
h(o)=\int_M q_o\,h\,dV_{g_0}
\qquad\forall h\in\mathcal P.
\]

Define the associated operator
\[
A_0=B(q_o)\in\mathscr B.
\]

\begin{proposition}
\label{prop:partII-A0-rayleigh}
For every \(f\in E\),
\[
\langle A_0 f,f\rangle_{L^2(M,g_0)}=-f(o)^2.
\]
\end{proposition}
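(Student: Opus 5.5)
This is a direct unwinding of the definitions of \(B(q)\) and \(q_o\). Fix \(f\in E\). By the definition of \(B(q)\) with \(q=q_o\), and taking \(g=f\),
\[
\langle A_0f,f\rangle_{L^2(M,g_0)}=\langle B(q_o)f,f\rangle_{L^2(M,g_0)}=-\int_M q_o\,f^2\,dV_{g_0}.
\]
The function \(h=f^2=f\cdot f\) is a product of two elements of \(E\), so by definition \(h\in\mathcal P=\operatorname{span}\{fg:f,g\in E\}\).

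Next, apply the Riesz characterization of \(q_o\), namely \(h(o)=\int_M q_o h\,dV_{g_0}\) for all \(h\in\mathcal P\), to \(h=f^2\). This gives \(\int_M q_o f^2\,dV_{g_0}=f(o)^2\). Substituting into the first display yields \(\langle A_0f,f\rangle=-f(o)^2\).

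The only point that needs care is that evaluation at \(o\) makes sense on \(\mathcal P\). Elements of \(\mathcal P\) are finite sums of products of smooth eigenfunctions, hence genuine smooth functions, so \(\operatorname{ev}_o\) is well defined. Because \(\mathcal P\) is finite-dimensional, \(\operatorname{ev}_o\) is automatically continuous for the \(L^2\) norm, and Riesz applies within \(\mathcal P\) with its \(L^2(M,g_0)\) inner product. The paper already records both facts when it introduces \(q_o\), so this step should not be an obstacle. No polarization argument is needed, since the quadratic form is evaluated directly on the diagonal.
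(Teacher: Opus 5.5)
Your proposal is correct and is essentially the paper's own argument. The paper likewise writes \(\langle A_0f,f\rangle=-\int_M q_o f^2\,dV_{g_0}\), notes that \(f^2\in\mathcal P\), and applies the Riesz defining property of \(q_o\) to get \(f(o)^2\).
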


\begin{proof}
By definition of \(B(q_o)\),
\[
\langle A_0 f,f\rangle
=
-\int_M q_o\,f^2\,dV_{g_0}.
\]
Since \(f^2\in\mathcal P\), the defining property of \(q_o\) gives
\[
\int_M q_o\,f^2\,dV_{g_0}=f^2(o).
\]
Hence
\[
\langle A_0 f,f\rangle=-f(o)^2.
\]
\end{proof}

\medskip

\noindent\textbf{The reproducing kernel vector.}

Since \(E\subset C^\infty(M)\) is finite-dimensional, evaluation at \(o\)
defines a continuous linear functional
\[
\operatorname{ev}_o:E\to\R.
\]
By the Riesz representation theorem, there exists a unique vector
\[
K_o\in E
\]
such that
\[
f(o)=\langle f,K_o\rangle_{L^2(M,g_0)}
\qquad\forall f\in E.
\]

\begin{theorem}
\label{thm:partII-A0-simple-lowest}
The operator \(A_0\) has simple lowest eigenvalue, and its lowest
eigendirection is the line \(\mathbb RK_o\).
\end{theorem}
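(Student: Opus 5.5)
By \Cref{prop:partII-A0-rayleigh}, the quadratic form of \(A_0\) is
\[
\langle A_0f,f\rangle=-f(o)^2=-\langle f,K_o\rangle^2 .
\]
The plan is to use this to identify \(A_0\) explicitly as the negative of a rank-one projection-type operator. Polarizing the identity, and using that \(A_0\) is symmetric, gives
\[
\langle A_0f,g\rangle=-\langle f,K_o\rangle\langle g,K_o\rangle
\qquad(f,g\in E).
\]
It follows that
\[
A_0f=-\langle f,K_o\rangle K_o,
\qquad\text{i.e.}\qquad
A_0=-K_o\otimes K_o .
\]
This step is routine: a symmetric bilinear form is determined by its quadratic form, and \(A_0\in\Sym(E)\) by construction.

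Next I read off the spectrum. On \(K_o^\perp\subset E\) the operator vanishes, which gives eigenvalue \(0\) with multiplicity \(12\). On the line \(\mathbb RK_o\) it acts by \(-\|K_o\|^2\). Thus the lowest eigenvalue is \(-\|K_o\|^2\), it is simple, and its eigenline is \(\mathbb RK_o\), provided \(K_o\neq0\).

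The only substantive point is therefore to show that \(K_o\neq0\), i.e., that some \(f\in E\) satisfies \(f(o)\neq0\). I would argue this by homogeneity. The group \(SU(2)\) acts on \(M=SU(2)/I^*\) by left translations, transitively and by isometries of \(g_0\), so it preserves \(E\). If every \(f\in E\) vanished at \(o\), then translating would make every \(f\in E\) vanish at every point of \(M\), forcing \(E=0\). This contradicts \(\dim E=13\) (\Cref{prop:partI-first-eigenvalue}).

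Alternatively, one can exhibit an explicit element nonvanishing at \(o\). The function \(A_0\) of \Cref{lem:partI-A0-equals-raw-polynomial} evaluated at the identity \(e=(1,0)\) gives \(I_{12}(1,0)=0\), so that choice fails. However, \(A_1(e)\) is the coefficient of \(x^{11}y\) in \(I_{12}\), which equals \(1\), so \(\Re(A_1)(o)=1\neq0\). Either route gives \(K_o\neq0\), and this nonvanishing is the only step requiring any care.
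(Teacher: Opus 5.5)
Your proof is correct and is essentially the paper's: both rest on $\langle A_0f,f\rangle=-\langle f,K_o\rangle^2$. The paper concludes via the Cauchy--Schwarz equality case for the Rayleigh quotient, whereas you polarize to the explicit rank-one form $A_0=-K_o\otimes K_o$, which also gives the full spectrum: $-\|K_o\|^2$ on $\mathbb R K_o$ and $0$ on $K_o^\perp$.

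Your explicit check that $K_o\neq0$ is a real improvement, whether by transitivity of the isometric $SU(2)$-action or by $\Re(A_1)(o)=1$. The paper's proof uses $K_o\neq0$ tacitly in the equality case, and its later justification $K_o(o)=\|K_o\|^2>0$ in \Cref{prop:partII-Ko-equals-F0-line} already presupposes $K_o\neq0$.
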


\begin{proof}
By \Cref{prop:partII-A0-rayleigh},
\[
\langle A_0f,f\rangle=-f(o)^2
\qquad\text{for every unit }f\in E.
\]
By the reproducing-kernel identity,
\[
f(o)=\langle f,K_o\rangle,
\]
hence
\[
f(o)^2\le \|K_o\|^2
\]
for every unit \(f\), with equality if and only if
\[
f=\pm \frac{K_o}{\|K_o\|}.
\]
Therefore the Rayleigh quotient of \(A_0\) is uniquely minimized on the line
\(\mathbb RK_o\). Since \(A_0\) is symmetric, this line is its lowest
eigendirection and the lowest eigenvalue is simple.
\end{proof}

\begin{proposition}
\label{prop:partII-Ko-equals-F0-line}
Let \(F_0\in E\) be a nonzero generator of the unique \(I^*\)-invariant line in
\(E\). Then
\[
\mathbb RK_o=\mathbb RF_0.
\]
\end{proposition}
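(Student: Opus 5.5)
The plan is to show that \(K_o\) is fixed by the isometric action of \(I^*\) on \(E\) that fixes the base point \(o\), and is nonzero. The uniqueness of the \(I^*\)-invariant line in \(E\) then forces \(\mathbb RK_o=\mathbb RF_0\).

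\textbf{Step 1 (symmetry).} Functions on \(M=SU(2)/I^*\) are right \(I^*\)-invariant functions on \(SU(2)\). The group \(SU(2)\) acts on them by left translation, \((L_hf)(z)=f(h^{-1}z)\). This action is by isometries of \(g_0\), so it commutes with \(\Delta_{g_0}\), preserves \(E\), and is unitary for \(L^2(M,g_0)\). The stabilizer of \(o=eI^*\) under this action is exactly \(I^*\), since \(h\,eI^*=eI^*\) if and only if \(h\in I^*\). Hence for \(h\in I^*\) and \(f\in E\) we have \((L_hf)(o)=f(o)\).

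In the matrix-coefficient model \(E_\C\cong V_{12}\otimes V_{12}^{I^*}\), this left action of \(I^*\) is the action on the first factor \(V_{12}\). So the invariant line in \(E\) for this action comes from the unique invariant line \(\C I_{12}\subset\Sym^{12}(\C^2)\), which is the line \(\mathbb RF_0\) of the statement. Checking that the paper's ``\(I^*\)-invariant line'' refers to this left action of the stabilizer of \(o\), and not to some other normalization, is the only point needing care. I would verify it against \Cref{prop:partI-coefficient-functions} and the Hopf-lift description of \(F_0\).

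\textbf{Step 2 (invariance of \(K_o\)).} For \(h\in I^*\) and \(f\in E\),
\[
\langle f,L_{h^{-1}}K_o\rangle=\langle L_hf,K_o\rangle=(L_hf)(o)=f(o)=\langle f,K_o\rangle .
\]
The first equality uses unitarity of \(L_h\). Since \(f\in E\) is arbitrary, \(L_{h^{-1}}K_o=K_o\) for all \(h\in I^*\). Thus \(K_o\) lies in the \(I^*\)-fixed subspace of \(E\), which is \(\mathbb RF_0\).

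\textbf{Step 3 (nonvanishing).} Suppose \(K_o=0\). Then every \(f\in E\) vanishes at \(o\). Because \(SU(2)\) acts transitively on \(M\) and preserves \(E\), for any \(z\in M\) we can write \(z=k\cdot o\), and then \(f(z)=(L_{k^{-1}}f)(o)=0\). So every \(f\in E\) vanishes identically, which contradicts \(\dim E=13\) from \Cref{prop:partI-first-eigenvalue}. Hence \(K_o\neq0\), and therefore \(\mathbb RK_o=\mathbb RF_0\).

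As an additional consistency check, \(F_0(o)=\langle F_0,K_o\rangle\neq0\). This matches the Rayleigh-quotient characterization in \Cref{thm:partII-A0-simple-lowest}.
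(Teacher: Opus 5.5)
Your Steps~1--2 follow the paper's proof. \(\operatorname{ev}_o\) is invariant under the isotropy group \(I^*\) of \(o\) for the isometric \(SU(2)\)-action, so \(K_o\) lies in the one-dimensional fixed space \(E^{I^*}\). Your Step~3 is better than the paper's. The paper gets \(K_o\neq0\) from \(K_o(o)=\|K_o\|^2>0\), which already presupposes \(K_o\neq0\). Your transitivity argument (otherwise every \(f\in E\) would vanish everywhere, contradicting \(\dim E=13\)) actually proves it. As a proof of the statement read literally, your proposal is complete.

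The point you deferred, however, would not survive the check you propose against the Hopf-lift description of \(F_0\).
\begin{itemize}
\item The Hopf circle \(t\mapsto e^{it}z\) and the isotropy group \(I^*\) of \(o\) act through the same isometric \(SU(2)\); the paper itself computes Hopf-circle behaviour through left weights.
\item The closed subgroup they generate contains a maximal torus. It is not contained in that torus's normalizer, because \(I^*\) is neither cyclic nor binary dihedral. Hence it is all of \(SU(2)\).
\item A vector of \(E\) fixed by both would therefore be an \(SU(2)\)-invariant function on the homogeneous space \(M\). It would then be constant, hence \(0\).
\end{itemize}
The Hopf lift of the \(A_5\)-invariant sextic is constant along Hopf fibers, so it is \emph{not} fixed by the isotropy \(I^*\). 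Concretely, the isotropy-fixed vector is the matrix coefficient attached to \(I_{12}=x^{11}y+11x^6y^6-xy^{11}\) itself. It has nonzero components along \(A_1\) and \(A_{11}\) (Hopf weights \(\pm10\)), so it is nonconstant on the Hopf fiber through \(o\).

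So what you and the paper actually prove is \(\mathbb RK_o=E^{I^*}\), with \(F_0\) understood as a generator of the isotropy-fixed line. This line is not the Hopf-lift seed with critical set \(C_5\sqcup C_3\sqcup C_2\), and you should not claim that it is. The paper's sentence identifying \(E^H\) with ``the distinguished \(I^*\)-invariant line generated by \(F_0\)'' makes exactly this identification without justification. Part~II relies on it when it combines the submersion at \([K_o]\) with the Morse--Bott analysis of Part~I.
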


\begin{proof}
Write
\[
G=SU(2),\qquad H=I^*,
\qquad M=G/H.
\]
The left action of \(G\) on \(M\) preserves the spherical metric \(g_0\), hence
induces an orthogonal representation on \(E\), given by
\[
(L_a f)(x)=f(a^{-1}x),
\qquad a\in G.
\]

Since \(o=eH\), every \(h\in H\) fixes \(o\). Therefore the evaluation
functional at \(o\),
\[
\operatorname{ev}_o:E\to\R,
\qquad
\operatorname{ev}_o(f)=f(o),
\]
is \(H\)-invariant:
\[
\operatorname{ev}_o(L_hf)=(L_hf)(o)=f(h^{-1}o)=f(o)
\qquad\forall h\in H,\ \forall f\in E.
\]

Now let \(h\in H\). For every \(f\in E\),
\[
\langle f,L_hK_o\rangle
=
\langle L_{h^{-1}}f,K_o\rangle
=
(L_{h^{-1}}f)(o)
=
f(o)
=
\langle f,K_o\rangle.
\]
Hence
\[
L_hK_o=K_o
\qquad\forall h\in H.
\]
Thus \(K_o\in E^H\).

By the representation-theoretic description of \(E\), the \(H=I^*\)-fixed
subspace \(E^H\) is one-dimensional; it is precisely the distinguished
\(I^*\)-invariant line generated by \(F_0\). Therefore
\[
K_o=cF_0
\]
for some \(c\in\R\).

Finally, \(K_o\neq0\), since
\[
K_o(o)=\langle K_o,K_o\rangle_{L^2(M,g_0)}=\|K_o\|_{L^2(M,g_0)}^2>0.
\]
Thus \(c\neq0\), and so
\[
\mathbb RK_o=\mathbb RF_0.
\]
\end{proof}

\begin{corollary}
\label{cor:partII-A0-simple-lowest-F0}
The operator \(A_0\) has simple lowest eigenvalue, and its lowest
eigendirection is the line \(\mathbb RF_0\).
\end{corollary}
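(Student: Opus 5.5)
This corollary follows by combining \Cref{thm:partII-A0-simple-lowest} with \Cref{prop:partII-Ko-equals-F0-line}; no new computation is needed.

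\Cref{thm:partII-A0-simple-lowest} gives that \(A_0\) has a simple lowest eigenvalue. It also identifies the corresponding eigenspace as the line \(\mathbb RK_o\). This part rests only on the Rayleigh identity \(\langle A_0f,f\rangle=-f(o)^2\) of \Cref{prop:partII-A0-rayleigh} and on the Cauchy--Schwarz equality case for \(f(o)=\langle f,K_o\rangle\).

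By \Cref{prop:partII-Ko-equals-F0-line}, \(\mathbb RK_o=\mathbb RF_0\). Substituting this equality of lines into the conclusion of \Cref{thm:partII-A0-simple-lowest} gives exactly the statement: the lowest eigenvalue of \(A_0\) is simple and its eigendirection is \(\mathbb RF_0\).

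No step here is a real obstacle. The only substantive input is already contained in \Cref{prop:partII-Ko-equals-F0-line}, namely that the \(I^*\)-fixed subspace \(E^{I^*}\) is one-dimensional. That fact comes from the uniqueness of the \(I^*\)-invariant line in \(\Sym^{12}(\C^2)\), together with \(K_o\neq0\). One should also confirm a convention: the \(I^*\)-action used there, left translation by elements of \(I^*\) fixing \(o=eI^*\), must be the one for which \(E^{I^*}=\mathbb RF_0\). Under the paper's identification of \(F_0\) as the generator of this fixed line, it is.
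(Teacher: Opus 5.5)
Your proof is correct and matches the paper's, which obtains the corollary in one step by combining \Cref{thm:partII-A0-simple-lowest} with \Cref{prop:partII-Ko-equals-F0-line}. Your convention caveat is well placed: the argument proves the statement with \(F_0\) taken as a generator of the isotropy-fixed line \(E^{I^*}=\mathbb RK_o\) (the zonal function at \(o\)), and this line cannot be the Hopf lift of the \(A_5\)-invariant sextic described in Part~I, because a vector of \(E_\C\cong V_{12}\) fixed by both the isotropy group \(I^*\) and the Hopf circle would be fixed by the closed subgroup they generate, which is all of \(SU(2)\), and \(V_{12}\) has no nonzero \(SU(2)\)-fixed vectors.
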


\begin{proof}
Combine \Cref{thm:partII-A0-simple-lowest} with
\Cref{prop:partII-Ko-equals-F0-line}.
\end{proof}

Let
\[
\Omega=\{A\in\mathscr B:\lambda_1(A)<\lambda_2(A)\}
\]
be the simple-lowest locus in \(\mathscr B\). For \(A\in\Omega\), denote by
\[
\ell(A)\in\mathbb P(E)
\]
its lowest eigenline.

\begin{proposition}
\label{prop:partII-differential-eigenline-map}
Let \(A\in\Omega\), and let \(v\in E\) be a unit lowest eigenvector,
\[
Av=\lambda_1(A)v.
\]
Identifying
\[
T_{\ell(A)}\mathbb P(E)\cong v^\perp,
\]
the differential of \(\ell\) at \(A\) is given by
\[
d\ell_A(H)
=
-(A-\lambda_1(A)I)^{-1}P_{v^\perp}(Hv),
\qquad H\in\mathscr B.
\]
In particular, \(d\ell_A\) is surjective if and only if
\[
P_{v^\perp}(\mathscr Bv)=v^\perp.
\]
\end{proposition}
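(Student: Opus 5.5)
The plan is standard first-order perturbation theory for a simple eigenvalue of a symmetric matrix. We need smoothness of \(\ell\) near \(A\), then a derivative formula.

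\textbf{Smoothness of \(\ell\).} Fix \(A\in\Omega\), so \(\lambda_1(A)\) is an isolated simple eigenvalue. Apply the implicit function theorem to
\[
\Phi(H,w,\mu)=\bigl((A+H)(v+w)-\mu(v+w),\ \langle w,v\rangle\bigr),
\qquad w\in v^\perp,\ \mu\in\R,
\]
at \((0,0,\lambda_1(A))\). Its partial differential in \((w,\mu)\) is \((\dot w,\dot\mu)\mapsto (A-\lambda_1(A))\dot w-\dot\mu v\), and it is invertible. The reason is the splitting \(E=\R v\oplus v^\perp\): the operator \(A-\lambda_1(A)\) maps \(v^\perp\) isomorphically onto itself, since \(\lambda_1(A)\) is simple, and the \(\dot\mu v\) term covers the \(\R v\) component. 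This gives smooth maps \(H\mapsto w(H)\), \(\mu(H)\) with \(w(0)=0\). Moreover \(\mu(H)=\lambda_1(A+H)\) for small \(H\), by continuity of eigenvalues and isolation of \(\lambda_1\). Hence \(\ell(A+H)=[v+w(H)]\) near \(\ell(A)\).

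In the affine chart \(v^\perp\to\mathbb P(E)\), \(w\mapsto[v+w]\), the point \(\ell(A)\) corresponds to \(w=0\). The chart differential at \(0\) is the identity, and this is the identification \(T_{\ell(A)}\mathbb P(E)\cong v^\perp\) used in the statement. So \(d\ell_A(H)=\dot w\), where \(\dot w=dw_0(H)\).

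\textbf{Derivative formula.} Differentiate \((A+tH)(v+w(tH))=\mu(tH)(v+w(tH))\) at \(t=0\) to get
\[
Hv+A\dot w=\dot\mu\,v+\lambda_1(A)\dot w.
\]
Apply \(P_{v^\perp}\). Since \(\dot w\in v^\perp\) and \(A\) preserves \(v^\perp\), this gives
\[
(A-\lambda_1(A))\dot w=-P_{v^\perp}(Hv),
\]
with \(\dot\mu=\langle Hv,v\rangle\) as a byproduct. Inverting \(A-\lambda_1(A)\) on \(v^\perp\) gives exactly the stated formula. Here the inverse is understood as the restriction of \(A-\lambda_1(A)I\) to \(v^\perp\). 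This is the only point needing care, since on all of \(E\) the operator has kernel \(\R v\).

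\textbf{Surjectivity criterion.} The image of \(d\ell_A\) is \(-(A-\lambda_1(A))|_{v^\perp}^{-1}\bigl(P_{v^\perp}(\mathscr Bv)\bigr)\). An isomorphism of \(v^\perp\) preserves whether a subspace is all of \(v^\perp\). So \(d\ell_A\) is surjective if and only if \(P_{v^\perp}(\mathscr Bv)=v^\perp\).

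The main obstacle is only bookkeeping: making the tangent-space identification precise and justifying smoothness of \(\ell\) on \(\Omega\). Smoothness also follows from the Riesz projector \(\frac{1}{2\pi i}\oint (z-A')^{-1}dz\) around \(\lambda_1(A)\), if one prefers that route to the implicit function theorem.
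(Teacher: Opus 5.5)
Your proposal is correct and follows the paper's argument essentially verbatim: work in the affine chart $w\mapsto[v+w]$, differentiate $(A+tH)(v+w(t))=\lambda(t)(v+w(t))$ at $t=0$, project onto $v^\perp$, invert $(A-\lambda_1(A)I)|_{v^\perp}$, and read off the surjectivity criterion because an isomorphism of $v^\perp$ preserves whether a subspace is all of $v^\perp$. Your implicit-function-theorem argument for smoothness spells out what the paper simply cites as standard finite-dimensional perturbation theory. One small cleanup: since you already take $w\in v^\perp$, the component $\langle w,v\rangle$ in $\Phi$ is redundant and makes the dimensions mismatch. It should be dropped, as your stated partial differential already does.
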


\begin{proof}
See Appendix~\ref{appendix:eigenline-differential}.
\end{proof}

\begin{proposition}
\label{prop:partII-adjoint-multiplication}
Let \(v\in E\) be a unit vector, and define
\[
T_v:\mathcal P\to v^\perp,
\qquad
T_v(q)=P_{v^\perp}(B(q)v).
\]
Then for every \(q\in\mathcal P\) and every \(w\in v^\perp\),
\[
\langle T_v(q),w\rangle_{L^2(M,g_0)}
=
-\int_M q\,vw\,dV_{g_0}.
\]
Consequently, the adjoint of \(T_v\) is, up to sign, the multiplication map
\[
\mu_v:v^\perp\to\mathcal P,
\qquad
\mu_v(w)=vw.
\]
In particular, \(T_v\) is surjective if and only if \(\mu_v\) is injective.
\end{proposition}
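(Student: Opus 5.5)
The identity follows by unwinding the definitions of \(B(q)\) and the orthogonal projection \(P_{v^\perp}\); the adjointness and surjectivity claims are then finite-dimensional linear algebra.

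\textbf{Step 1: the pairing formula.} Fix \(q\in\mathcal P\) and \(w\in v^\perp\). Since \(P_{v^\perp}\) is the orthogonal projection onto \(v^\perp\), it is self-adjoint and fixes \(w\). Hence
\[
\langle T_v(q),w\rangle=\langle P_{v^\perp}B(q)v,w\rangle=\langle B(q)v,P_{v^\perp}w\rangle=\langle B(q)v,w\rangle .
\]
By the defining relation of \(B(q)\), applied with \(f=v\) and \(g=w\) (both in \(E\)), this equals \(-\int_M q\,vw\,dV_{g_0}\). This proves the displayed identity.

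\textbf{Step 2: the adjoint.} We view \(\mathcal P\) and \(v^\perp\) as finite-dimensional inner product spaces with the \(L^2(M,g_0)\) inner product. For \(w\in v^\perp\), the product \(vw\) is a product of two elements of \(E\), so \(\mu_v(w)=vw\) lies in \(\mathcal P\), and \(\mu_v\) is linear. Step~1 then reads
\[
\langle T_v q,w\rangle_{v^\perp}=-\langle q,\mu_v w\rangle_{\mathcal P}\qquad\text{for all } q\in\mathcal P,\ w\in v^\perp .
\]
Hence \(T_v^*=-\mu_v\).

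\textbf{Step 3: surjectivity.} For a linear map between finite-dimensional inner product spaces, \(\operatorname{im}T_v=(\ker T_v^*)^\perp\). Therefore \(T_v\) is surjective onto \(v^\perp\) if and only if \(\ker T_v^*=\ker\mu_v=0\), that is, if and only if \(\mu_v\) is injective.

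The only point needing care is that the codomain of \(\mu_v\) is genuinely \(\mathcal P\), so that the adjoint is computed inside \(\mathcal P\) and not in all of \(L^2\). This holds because \(\mathcal P\) is by definition the span of the products \(fg\) with \(f,g\in E\).
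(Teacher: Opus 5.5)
Your proof is correct and follows the paper's argument: you derive the pairing identity from the definition of \(B(q)\), identify \(T_v^*\) with \(-\mu_v\), and use the finite-dimensional fact that \(T_v\) is surjective exactly when its adjoint is injective. You also spell out two points the paper leaves implicit: \(P_{v^\perp}\) can be dropped because it is self-adjoint and fixes \(w\), and \(\mu_v\) really takes values in \(\mathcal P\).
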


\begin{proof}
For \(q\in\mathcal P\) and \(w\in v^\perp\),
\[
\langle T_v(q),w\rangle
=
\langle B(q)v,w\rangle
=
-\int_M q\,vw\,dV_{g_0}.
\]
This identifies the adjoint of \(T_v\) with \(-\mu_v\). Since both spaces are
finite-dimensional, surjectivity of \(T_v\) is equivalent to injectivity of its
adjoint.
\end{proof}

\begin{lemma}
\label{lem:partII-multiplication-Ko-injective}
The multiplication map
\[
\mu_{K_o}:K_o^\perp\to \mathcal P,
\qquad
w\mapsto K_ow
\]
is injective.
\end{lemma}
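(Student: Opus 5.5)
The argument rests on the fact that smooth functions on a connected real-analytic manifold form an integral domain once restricted to real-analytic functions. I will show that if \(K_ow=0\) identically on \(M\) with \(w\in K_o^\perp\subset E\), then \(w=0\).

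\textbf{Step 1: analyticity.} Every element of \(E\) is a real-analytic function on \(M\). These are eigenfunctions of the Laplacian of the real-analytic round metric; more concretely, they descend from restrictions to \(S^3\) of harmonic polynomials of degree \(12\) on \(\R^4\). In particular \(K_o\) and \(w\) are real-analytic, and \(M\) is connected.

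\textbf{Step 2: \(K_o\not\equiv 0\) on any open set.} By \Cref{prop:partII-Ko-equals-F0-line} we have \(K_o=cF_0\) with \(c\neq0\), since \(K_o(o)=\|K_o\|^2>0\). So \(K_o\) is a nonzero analytic function. Its zero set \(Z=\{K_o=0\}\) therefore has empty interior, because an analytic function vanishing on an open subset of a connected manifold vanishes identically. Concretely, \(F_0\) is nonconstant, with the critical structure of \Cref{cor:partI-F0-morse-bott-all}, so \(Z\) is a proper analytic subset.

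\textbf{Step 3: conclude.} Suppose \(K_ow=0\) on \(M\). Then \(w=0\) on the open dense set \(M\setminus Z\). By continuity \(w\equiv0\); analyticity gives the same conclusion. This needs only that \(Z\) has empty interior. Hence \(\mu_{K_o}\) is injective on all of \(E\), and in particular on \(K_o^\perp\).

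The only point requiring care is that \(K_o w=0\) must be read as an identity of functions in \(\mathcal P\subset L^2(M,g_0)\). Since all functions involved are smooth, the \(L^2\) identity and the pointwise identity coincide. Beyond that there is no real obstacle: the key input is simply that \(K_o\) is a nonzero element of the analytic eigenspace, which was established in \Cref{prop:partII-Ko-equals-F0-line}. Equivalently, one could observe that the lifted functions on \(S^3\) are restrictions of polynomials on \(\R^4\). The polynomial ring is an integral domain, and a polynomial vanishing on the Zariski-dense set \(S^3\) vanishes on all of \(\R^4\), so the product of the lifts vanishing forces one factor to vanish.
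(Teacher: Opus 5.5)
Your argument is correct and is essentially the paper's: both rest on the fact that the nonzero eigenfunction $K_o$ cannot vanish on an open set, and your step from $w=0$ on the dense set $M\setminus\{K_o=0\}$ to $w\equiv0$ by continuity simply replaces the paper's second appeal to unique continuation (for $w$). Two small cautions: first, $K_o\neq0$ really comes from $\operatorname{ev}_o\neq0$ on $E$ (e.g.\ by transitivity of the isometric $SU(2)$-action on $M$), since ``$K_o(o)=\|K_o\|^2>0$'' presupposes it; second, in your closing aside $S^3$ is not Zariski-dense in $\R^4$ (it is the zero set of $|x|^2-1$), so that remark only works once you use that the lifts are homogeneous of degree $12$, which lets vanishing of a product on $S^3$ propagate to all of $\R^4$.
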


\begin{proof}
Suppose \(w\in K_o^\perp\) satisfies \(K_ow\equiv0\). Since \(K_o\) is a nonzero
eigenfunction, its zero set has empty interior. Hence there is a nonempty open
set on which \(K_o\neq0\), and on that open set \(w=0\). Since \(w\) is also an
eigenfunction, unique continuation implies \(w\equiv0\).
\end{proof}

\begin{theorem}
\label{thm:partII-local-submersion}
The differential
\[
d\ell_{A_0}:\mathscr B\to T_{[F_0]}\mathbb P(E)
\]
is surjective. Equivalently, \(\ell\) is a submersion at \(A_0\).
\end{theorem}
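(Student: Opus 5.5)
The plan is to chain together the finite-dimensional reductions already established. By \Cref{cor:partII-A0-simple-lowest-F0}, \(A_0\in\Omega\), and its lowest eigenline is \(\ell(A_0)=[F_0]=[K_o]\). Set \(v=K_o/\|K_o\|\), a unit lowest eigenvector of \(A_0\). By \Cref{prop:partII-differential-eigenline-map},
\[
d\ell_{A_0}(H)=-(A_0-\lambda_1(A_0)I)^{-1}P_{v^\perp}(Hv),\qquad H\in\mathscr B .
\]
Because \(\lambda_1(A_0)\) is simple, \(A_0-\lambda_1(A_0)I\) preserves \(v^\perp\) and is invertible there. Hence surjectivity of \(d\ell_{A_0}\) onto \(T_{[F_0]}\mathbb P(E)\cong v^\perp\) reduces to the condition \(P_{v^\perp}(\mathscr Bv)=v^\perp\).

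Next I rewrite this condition through the parametrization \(\mathscr B=B(\mathcal P)\). For every \(H=B(q)\) with \(q\in\mathcal P\) we have \(P_{v^\perp}(Hv)=T_v(q)\). Therefore \(P_{v^\perp}(\mathscr Bv)\) is exactly the image of the map \(T_v:\mathcal P\to v^\perp\). By \Cref{prop:partII-adjoint-multiplication}, \(T_v\) is surjective if and only if the multiplication map \(\mu_v:v^\perp\to\mathcal P\), \(w\mapsto vw\), is injective. This uses that both spaces are finite-dimensional and that \(\mu_v\) really lands in \(\mathcal P\), which holds since \(v,w\in E\).

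Finally, \(v\) is a nonzero multiple of \(K_o\), so \(v^\perp=K_o^\perp\) and \(\mu_v\) is a nonzero scalar multiple of \(\mu_{K_o}\). Injectivity of \(\mu_v\) is therefore precisely \Cref{lem:partII-multiplication-Ko-injective}, which rests on unique continuation for eigenfunctions. Reading the chain of equivalences backwards gives surjectivity of \(d\ell_{A_0}\), that is, \(\ell\) is a submersion at \(A_0\).

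I expect no real obstacle. The one point that deserves care is the identification of \(T_{[F_0]}\mathbb P(E)\) with \(v^\perp\), together with the invertibility of \(A_0-\lambda_1 I\) on \(v^\perp\). Both follow from the simplicity of the lowest eigenvalue and from the symmetry of \(A_0\), so that \(v^\perp\) is an invariant subspace containing no eigenvector with eigenvalue \(\lambda_1(A_0)\).
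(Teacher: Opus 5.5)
Your proposal is correct and follows the paper's proof step for step. You reduce via \Cref{prop:partII-differential-eigenline-map} to $P_{K_o^\perp}(\mathscr B K_o)=K_o^\perp$, dualize with \Cref{prop:partII-adjoint-multiplication}, and conclude with the unique-continuation injectivity of $\mu_{K_o}$ from \Cref{lem:partII-multiplication-Ko-injective}; your explicit normalization $v=K_o/\|K_o\|$, which meets the unit-vector hypothesis and makes $\mu_v$ a nonzero multiple of $\mu_{K_o}$, is a small piece of care the paper leaves implicit.
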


\begin{proof}
By \Cref{cor:partII-A0-simple-lowest-F0}, the lowest eigendirection of \(A_0\)
is \(\mathbb RF_0=\mathbb RK_o\). By
\Cref{prop:partII-differential-eigenline-map}, it suffices to prove that
\[
P_{K_o^\perp}(\mathscr BK_o)=K_o^\perp.
\]
By \Cref{prop:partII-adjoint-multiplication}, this is equivalent to
injectivity of
\[
\mu_{K_o}:K_o^\perp\to\mathcal P,
\]
which is exactly \Cref{lem:partII-multiplication-Ko-injective}.
\end{proof}

\begin{corollary}
\label{cor:partII-realizable-neighborhood}
There exists an open neighborhood
\[
U\subset \mathbb P(E)
\]
of \([F_0]\) such that
\[
U\subset \ell(\Omega).
\]
Equivalently, every projective line in \(U\) is realized as the simple lowest
eigendirection of some operator in \(\mathscr B\).
\end{corollary}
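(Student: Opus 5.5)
The corollary follows from \Cref{thm:partII-local-submersion} by the local submersion theorem. The work is to check that \(\ell\) is a smooth map on an open subset of the finite-dimensional space \(\mathscr B\), and that its differential at \(A_0\) is onto the tangent space of the projective space \(\mathbb P(E)\).

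First I will show that \(\Omega\) is open in \(\mathscr B\) and that \(\ell:\Omega\to\mathbb P(E)\) is smooth. Openness holds because the ordered eigenvalues \(\lambda_1(A)\le\lambda_2(A)\) of a symmetric operator depend continuously on \(A\), so the strict inequality \(\lambda_1(A)<\lambda_2(A)\) persists near each point of \(\Omega\). For smoothness, near a fixed \(A\in\Omega\) I will write the spectral projector onto the lowest eigenline as a Riesz integral
\[
P(A')=\frac{1}{2\pi i}\oint_\Gamma (z-A')^{-1}\,dz,
\]
where \(\Gamma\) is a small circle enclosing only \(\lambda_1(A)\). This projector is real-analytic in \(A'\) and has rank one, and \(\ell(A')\) is its image. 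In the standard affine chart of \(\mathbb P(E)\) around \(\ell(A)\), the map \(A'\mapsto \operatorname{im}P(A')\) is therefore smooth. This is the same setting in which the differential formula of \Cref{prop:partII-differential-eigenline-map} was derived.

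Next, \Cref{cor:partII-A0-simple-lowest-F0} gives \(A_0\in\Omega\) with \(\ell(A_0)=[F_0]\). \Cref{thm:partII-local-submersion} gives that \(d\ell_{A_0}:\mathscr B\to T_{[F_0]}\mathbb P(E)\) is surjective. Both spaces are finite-dimensional, with \(\dim T_{[F_0]}\mathbb P(E)=12\).

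Since \(d\ell_{A_0}\) is surjective, I will choose a \(12\)-dimensional linear subspace \(W\subset\mathscr B\) on which \(d\ell_{A_0}\) is an isomorphism. I then restrict \(\ell\) to the affine slice \(A_0+W\), intersected with \(\Omega\). At \(A_0\) this restricted map has invertible differential. By the inverse function theorem, it maps some neighborhood of \(A_0\) in \(A_0+W\) diffeomorphically onto an open neighborhood \(U\) of \([F_0]\) in \(\mathbb P(E)\). Shrinking that neighborhood so that it lies inside \(\Omega\), every line in \(U\) equals \(\ell(A)\) for some \(A\in\Omega\), which is exactly \(U\subset\ell(\Omega)\).

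The only real obstacle is the smoothness of \(\ell\) on \(\Omega\); the inverse-function step itself is routine. If one prefers to avoid the contour integral, an alternative route to smoothness is to apply the implicit function theorem to the system
\[
A'v=\lambda v,\qquad \langle v,v_0\rangle=1
\]
near \((A_0,v_0,\lambda_1(A_0))\), where \(v_0\) spans \(\ell(A_0)\). The linearization of this system is invertible precisely because \(\lambda_1(A_0)\) is a simple eigenvalue.
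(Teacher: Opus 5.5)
Your proposal is correct and follows the same route as the paper. The paper's proof invokes the local submersion theorem for \(\ell\) at \(A_0\), using \Cref{thm:partII-local-submersion} and \(\ell(A_0)=[F_0]\), to conclude that \(\ell(\Omega)\) contains a neighborhood of \([F_0]\); you additionally supply the details it leaves implicit --- openness of \(\Omega\), smoothness of \(\ell\) via the Riesz projector, and the inverse-function-theorem step on a \(12\)-dimensional slice \(A_0+W\) --- and all of these are sound.
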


\begin{proof}
Since \(\ell\) is a smooth map between finite-dimensional manifolds and is a
submersion at \(A_0\), its image contains a neighborhood of
\[
\ell(A_0)=[F_0].
\]
\end{proof}

\section{Passage from a selected splitter to an actual nearby metric}
\label{sec:partII-branch-theorem}

We now make precise the passage from a first-order splitter
\(B^{(\rho)}\in\mathscr B\) to an actual nearby branch of the Laplacian.

Let
\[
g_\varepsilon=e^{2\varepsilon\rho}g_0,
\qquad \rho\in C^\infty(M),
\]
and write
\[
L_\varepsilon=-\Delta_{g_\varepsilon}
\]
as an unbounded self-adjoint operator on \(L^2(M,g_\varepsilon)\). Equivalently,
after transporting everything to the fixed Hilbert space \(H_0=L^2(M,g_0)\)
through the unitary identification
\[
U_\varepsilon:L^2(M,g_\varepsilon)\to H_0,
\qquad
U_\varepsilon f=e^{\frac32\varepsilon\rho}f,
\]
we obtain a smooth family of self-adjoint elliptic operators
\[
\widetilde L_\varepsilon=U_\varepsilon L_\varepsilon U_\varepsilon^{-1}
\]
on \(H_0\), with
\[
\widetilde L_0=L_0=-\Delta_{g_0}.
\]
All spectral statements below are taken on this fixed Hilbert space.

Let
\[
0=\mu_0<\mu_1=168<\mu_2
\]
be the first three distinct eigenvalues of \(L_0\), and let
\[
E=\ker(L_0-168I).
\]

\begin{proposition}[Isolated cluster and transported finite-dimensional reduction]
\label{prop:partII-cluster-reduction}
There exist real numbers
\[
\gamma_-<168<\gamma_+,
\qquad
(0,\gamma_-)\cap\Spec(L_0)=\varnothing,
\qquad
(\gamma_-,\gamma_+)\cap\Spec(L_0)=\{168\},
\]
and \(\varepsilon_0>0\) such that, for all \(|\varepsilon|<\varepsilon_0\), the
following hold:

\begin{enumerate}[label=\arabic*.]
\item The spectrum of \(\widetilde L_\varepsilon\) in \((\gamma_-,\gamma_+)\)
consists of exactly \(13\) eigenvalues, counted with multiplicity, and there is
no positive spectrum of \(\widetilde L_\varepsilon\) in \((0,\gamma_-]\).

\item The Riesz spectral projection
\[
P_\varepsilon
=
\frac{1}{2\pi i}\int_\Gamma (z-\widetilde L_\varepsilon)^{-1}\,dz,
\]
where \(\Gamma\) is a positively oriented circle enclosing \(168\) and no other
point of \(\Spec(L_0)\), is well-defined for \(|\varepsilon|<\varepsilon_0\),
has rank \(13\), and depends smoothly on \(\varepsilon\) in operator norm.

\item There exists a smooth family of unitary isomorphisms
\[
W_\varepsilon:E\to E_\varepsilon=\operatorname{Ran}(P_\varepsilon),
\qquad
W_0=\Id_E.
\]

\item The transported finite-dimensional operator
\[
T_\varepsilon=W_\varepsilon^{-1}\,\widetilde L_\varepsilon|_{E_\varepsilon}\,W_\varepsilon
\in \Sym(E)
\]
depends smoothly on \(\varepsilon\), satisfies
\[
T_0=168\,I_E,
\]
and has first-order expansion
\[
T_\varepsilon=168\,I_E+\varepsilon B^{(\rho)}+o(\varepsilon)
\qquad\text{in operator norm on }E.
\]
\end{enumerate}
\end{proposition}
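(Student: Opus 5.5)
The plan is to treat \(\widetilde L_\varepsilon\) as a smooth (in fact real-analytic) family of self-adjoint elliptic operators on the fixed Hilbert space \(H_0\), all with common domain \(H^2(M)\). We then apply standard Kato-type perturbation theory for an isolated eigenvalue of finite multiplicity, and identify the first-order term via the quadratic forms \(a_\varepsilon,m_\varepsilon\).

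\textbf{Spectral cluster and Riesz projection.} Since \(L_0\) has discrete spectrum, we may choose \(\gamma_-\in(0,168)\) and \(\gamma_+\in(168,\mu_2)\), and let \(\Gamma\) be a circle centred at \(168\) of radius less than \(\min(168-\gamma_-,\gamma_+-168)\). Conjugating \(-\Delta_{g_\varepsilon}=-e^{-3\varepsilon\rho}\operatorname{div}_{g_0}(e^{\varepsilon\rho}\nabla_{g_0}\,\cdot)\) by \(U_\varepsilon\) gives a second-order operator whose coefficients depend smoothly on \(\varepsilon\). Hence \(\widetilde L_\varepsilon-L_0\) is a bounded map \(H^2\to L^2\) of norm \(O(|\varepsilon|)\), and it is relatively bounded with respect to \(L_0\) with small relative bound.
\begin{itemize}
\item For \(z\) in the compact set \(\Gamma\cup\{\text{boundary of the annulus around } [\gamma_-,\gamma_+]\}\), the Neumann series shows that \((z-\widetilde L_\varepsilon)^{-1}\) exists and is smooth in \(\varepsilon\) in operator norm. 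This makes \(P_\varepsilon\) well-defined and smooth.
\item The rank of \(P_\varepsilon\) is locally constant, since \(\|P_\varepsilon-P_0\|<1\) for small \(\varepsilon\). Hence the rank is \(\dim E=13\).
\item To show there is no spectrum in \((0,\gamma_-]\cup[\gamma_-,\gamma_+)\setminus\operatorname{int}\Gamma\), we use that the spectrum moves continuously. One route is min–max applied to the forms \(a_\varepsilon/m_\varepsilon\), which are uniformly equivalent to \(a_0/m_0\) up to factors \(e^{\pm C|\varepsilon|}\); this bounds each eigenvalue \(\lambda_k(\varepsilon)\) within a factor \(e^{\pm 4C|\varepsilon|}\) of \(\lambda_k(0)\). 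That alone gives item 1, including the absence of positive spectrum below \(\gamma_-\). Here we use that the kernel stays equal to the constants, because \(M\) is connected.
\end{itemize}

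\textbf{Transport map.} We take Kato's transformation function: \(W_\varepsilon\) is the restriction to \(E\) of the solution of \(\dot W=[\dot P_\varepsilon,P_\varepsilon]W\) with \(W_0=I\). Equivalently, in closed form,
\[
W_\varepsilon=P_\varepsilon P_0\bigl(P_0P_\varepsilon P_0\bigr)^{-1/2}\ \text{ on } E.
\]
This map is unitary from \(E\) onto \(E_\varepsilon\) and smooth in \(\varepsilon\), which gives item 3. Then \(T_\varepsilon\) is symmetric and smooth, and \(T_0=168\,I_E\).

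\textbf{First-order term.} This is the main computation. We have \(W_\varepsilon=I+\varepsilon W'+o(\varepsilon)\), and the derivative \(W'\) maps \(E\) into \(E^\perp\); for the polar form this is the statement \(P_0W'P_0=0\). For \(u,v\in E\), write
\[
\langle T_\varepsilon u,v\rangle=\langle \widetilde L_\varepsilon W_\varepsilon u,W_\varepsilon v\rangle
\]
and differentiate at \(\varepsilon=0\). The terms containing \(W'\) give \(\langle L_0W'u,v\rangle+\langle L_0u,W'v\rangle=168(\langle W'u,v\rangle+\langle u,W'v\rangle)=0\), by self-adjointness and \(W'E\perp E\). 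What remains is \(\langle \widetilde L'_0u,v\rangle\). Undoing the unitary, \(\langle \widetilde L_\varepsilon u,v\rangle_{H_0}=a_\varepsilon(\tilde u,\tilde v)\) with \(\tilde u=e^{-\frac32\varepsilon\rho}u\), while \(m_\varepsilon(\tilde u,\tilde v)=\langle u,v\rangle_{H_0}\) is constant in \(\varepsilon\). Differentiating gives
\[
a_0'(u,v)-\tfrac32\bigl(a_0(\rho u,v)+a_0(u,\rho v)\bigr),
\]
and we rewrite this as \(a_0'(u,v)-\lambda_{\mathrm{sph}}m_0'(u,v)\). The rewriting uses \(a_0(\rho u,v)=168\langle\rho u,v\rangle\) and \(a_0(u,\rho v)=168\langle u,\rho v\rangle\), which hold because \(u,v\) are eigenfunctions; together they contribute \(-\tfrac32\cdot 2\cdot168\int\rho uv=-168\,m_0'(u,v)\). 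By \Cref{prop:partII-splitting-formula}, the result is \(B^{(\rho)}\).

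Smoothness of the whole expression in \(\varepsilon\) upgrades this to the expansion \(T_\varepsilon=168I+\varepsilon B^{(\rho)}+o(\varepsilon)\). The main obstacle I expect is the bookkeeping of the unitary \(U_\varepsilon\), in particular checking the sign and the factor \(\tfrac32\) against the exponents \(3\) and \(1\) in \(m_\varepsilon,a_\varepsilon\); the spectral-theoretic parts are standard.
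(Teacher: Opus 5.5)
Your proposal is correct and follows essentially the paper's route: Kato perturbation theory for the isolated $168$-cluster and its Riesz projection, a Kato-type unitary transport $W_\varepsilon$, and differentiation of $a_\varepsilon(e^{-\frac32\varepsilon\rho}u,e^{-\frac32\varepsilon\rho}v)$ to obtain $a_0'-168\,m_0'=B^{(\rho)}$. Your min--max comparison of $a_\varepsilon/m_\varepsilon$ gives a more explicit proof of item~1 than the paper's bare citation, and your isometry argument ($W'E\perp E$) does the same job as the paper's cancellation of the terms $-KL_0+L_0K$ with $K=W_0'$.

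One small correction: $P_\varepsilon P_0(P_0P_\varepsilon P_0)^{-1/2}$ is the direct-rotation (Sz.-Nagy) map, not the solution of Kato's adiabatic equation. The two agree to first order but are not identical in general. This is harmless, because only some smooth isometric family with $W_0=\Id_E$ is needed, and either choice works.
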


\begin{proof}
Choose \(\gamma_-,\gamma_+\) with
\[
0<\gamma_-<168<\gamma_+<\mu_2.
\]
Then \(168\) is the only eigenvalue of \(L_0\) in \((\gamma_-,\gamma_+)\), and
there is no positive spectrum in \((0,\gamma_-]\).

Because \(\widetilde L_\varepsilon\) is a smooth family of self-adjoint elliptic
operators on the fixed Hilbert space \(H_0\), standard perturbation theory
implies that, for \(|\varepsilon|\) sufficiently small, the spectrum near \(168\)
remains separated from the rest of the spectrum, the associated Riesz projection
\[
P_\varepsilon=\frac{1}{2\pi i}\int_\Gamma (z-\widetilde L_\varepsilon)^{-1}\,dz
\]
is well-defined, has constant rank \(13\), and depends smoothly on
\(\varepsilon\); see \cite[Chap.~VII]{Kato}. This proves (1) and (2), after
possibly shrinking \(\varepsilon_0\).

By Kato's canonical spectral transport, after shrinking \(\varepsilon_0\) if
necessary there exists a smooth family of unitaries
\[
W_\varepsilon:H_0\to H_0,
\qquad
W_0=\Id_{H_0},
\qquad
W_\varepsilon(E)=E_\varepsilon.
\]
Restricting to \(E\) gives a smooth family of unitary isomorphisms
\[
W_\varepsilon:E\to E_\varepsilon,
\qquad
W_0=\Id_E.
\]
This proves (3).

Now define
\[
T_\varepsilon=W_\varepsilon^{-1}\,\widetilde L_\varepsilon|_{E_\varepsilon}\,W_\varepsilon
\in \Sym(E).
\]
Since \(W_\varepsilon\) and \(\widetilde L_\varepsilon\) depend smoothly on
\(\varepsilon\), so does \(T_\varepsilon\). Moreover,
\[
T_0=L_0|_E=168\,I_E.
\]

It remains to compute the derivative \(T_0'\). Let
\[
K=W_0':E\to H_0.
\]
Differentiating
\[
T_\varepsilon=W_\varepsilon^{-1}\widetilde L_\varepsilon W_\varepsilon|_E
\]
at \(\varepsilon=0\), we obtain
\[
T_0'=-K\,L_0|_E+\widetilde L_0'|_E+L_0K|_E.
\]
Let \(u,v\in E\). Since \(L_0v=168\,v\), we have
\[
\langle -K L_0u+L_0Ku,v\rangle_{H_0}
=
-168\langle Ku,v\rangle_{H_0}+\langle Ku,L_0v\rangle_{H_0}
=
0.
\]
Therefore
\[
\langle T_0' u,v\rangle_{H_0}
=
\langle \widetilde L_0' u,v\rangle_{H_0}
\qquad\forall u,v\in E.
\]

We now compute \(\widetilde L_0'\) on \(E\). For \(u,v\in E\), set
\[
u_\varepsilon=U_\varepsilon^{-1}u=e^{-\frac32\varepsilon\rho}u,
\qquad
v_\varepsilon=U_\varepsilon^{-1}v=e^{-\frac32\varepsilon\rho}v.
\]
By definition of \(\widetilde L_\varepsilon\),
\[
\langle \widetilde L_\varepsilon u,v\rangle_{H_0}
=
\langle L_\varepsilon u_\varepsilon,v_\varepsilon\rangle_{L^2(g_\varepsilon)}
=
a_\varepsilon(u_\varepsilon,v_\varepsilon).
\]
Differentiating at \(\varepsilon=0\), using
\[
u_0'=-\frac32\rho u,\qquad v_0'=-\frac32\rho v,
\]
gives
\[
\langle \widetilde L_0' u,v\rangle_{H_0}
=
a_0'(u,v)+a_0(u_0',v)+a_0(u,v_0').
\]
Hence
\[
\langle \widetilde L_0' u,v\rangle_{H_0}
=
a_0'(u,v)-\frac32 a_0(\rho u,v)-\frac32 a_0(u,\rho v).
\]

Since \(u,v\in E\), we have
\[
L_0u=168\,u,\qquad L_0v=168\,v.
\]
Using the identity
\[
a_0(f,g)=\langle f,L_0g\rangle_{H_0}
\qquad (f,g\in C^\infty(M)),
\]
we obtain
\[
a_0(\rho u,v)=\langle \rho u,L_0v\rangle_{H_0}
=168\int_M \rho\,u\,v\,dV_{g_0}.
\]
Also, by symmetry of the Dirichlet form \(a_0\),
\[
a_0(u,\rho v)=a_0(\rho v,u)=\langle \rho v,L_0u\rangle_{H_0}
=168\int_M \rho\,u\,v\,dV_{g_0}.
\]
Therefore
\[
\langle \widetilde L_0' u,v\rangle_{H_0}
=
a_0'(u,v)-3\cdot 168\int_M \rho\,u\,v\,dV_{g_0}.
\]
Since
\[
m_0'(u,v)=3\int_M \rho\,u\,v\,dV_{g_0},
\]
it follows that
\[
\langle \widetilde L_0' u,v\rangle_{H_0}
=
a_0'(u,v)-168\,m_0'(u,v).
\]
By the definition of \(B^{(\rho)}\),
\[
\langle \widetilde L_0' u,v\rangle_{H_0}
=
\langle B^{(\rho)}u,v\rangle_{H_0}.
\]
Hence
\[
\langle T_0' u,v\rangle_{H_0}
=
\langle B^{(\rho)}u,v\rangle_{H_0}
\qquad\forall u,v\in E,
\]
so
\[
T_0'=B^{(\rho)}.
\]

Since \(T_\varepsilon\) is smooth in \(\varepsilon\), we conclude that
\[
T_\varepsilon
=
T_0+\varepsilon T_0'+o(\varepsilon)
=
168\,I_E+\varepsilon B^{(\rho)}+o(\varepsilon)
\]
in operator norm on \(E\). This proves (4).
\end{proof}

\begin{theorem}[Branch selection from a simple lowest splitter]
\label{thm:partII-branch-selection}
Let \(\rho\in C^\infty(M)\), and consider
\[
g_\varepsilon=e^{2\varepsilon\rho}g_0.
\]
Let \(B^{(\rho)}:E\to E\) be the first-order splitting operator on
\(E=E_{168}(g_0)\). Assume that \(B^{(\rho)}\) has a simple lowest eigenvalue,
with normalized eigenvector \(v_*\in E\).

Then for all sufficiently small \(\varepsilon>0\),
\begin{enumerate}[label=\arabic*.]
\item the first positive eigenvalue \(\lambda_1(g_\varepsilon)\) is simple;

\item an \(L^2(M,g_0)\)-normalized eigenvector \(\widetilde\phi_\varepsilon\in E_\varepsilon\)
of \(\widetilde L_\varepsilon\) associated with the lowest eigenvalue in the
perturbed \(168\)-cluster converges, after sign choice, to \(v_*\in E\) in
\(H_0=L^2(M,g_0)\);

\item consequently, an \(L^2(M,g_\varepsilon)\)-normalized first eigenfunction
\(\phi_\varepsilon\) of \(-\Delta_{g_\varepsilon}\) converges, after sign
choice, to \(v_*\) in \(C^\infty(M)\).
\end{enumerate}
\end{theorem}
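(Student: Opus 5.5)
The plan is to run everything through the finite-dimensional reduction of \Cref{prop:partII-cluster-reduction} and then upgrade the convergence of eigenvectors using elliptic regularity.

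\textbf{Step 1: the perturbed cluster and simplicity.} Write \(\beta_1<\beta_2\le\dots\le\beta_{13}\) for the eigenvalues of \(B^{(\rho)}\), and put \(\delta=\beta_2-\beta_1>0\). By part~4 of \Cref{prop:partII-cluster-reduction},
\[
\varepsilon^{-1}(T_\varepsilon-168I_E)=B^{(\rho)}+o(1)
\]
in operator norm. Eigenvalues of symmetric matrices are Lipschitz in operator norm (Weyl's inequality). So for small \(\varepsilon>0\) the lowest eigenvalue of \(T_\varepsilon\) equals \(168+\varepsilon\beta_1+o(\varepsilon)\), and it is separated from the next one by at least \(\varepsilon\delta/2\). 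Since \(W_\varepsilon\) is unitary, \(T_\varepsilon\) and \(\widetilde L_\varepsilon|_{E_\varepsilon}\) have the same spectrum. Part~1 of the same proposition says the 13 cluster eigenvalues are the only positive spectrum of \(\widetilde L_\varepsilon\) below \(\gamma_+\). Hence \(\lambda_1(g_\varepsilon)\) is the lowest cluster eigenvalue, and it is simple. The conjugation \(U_\varepsilon\) preserves spectra and multiplicities, so the same holds for \(-\Delta_{g_\varepsilon}\). The restriction \(\varepsilon>0\) is exactly what makes the lowest eigenvalue of \(B^{(\rho)}\) produce the lowest perturbed eigenvalue; for \(\varepsilon<0\) the order would reverse.

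\textbf{Step 2: \(L^2\) convergence.} Let \(w_\varepsilon\in E\) be a unit lowest eigenvector of \(T_\varepsilon\), or equivalently of \(\varepsilon^{-1}(T_\varepsilon-168I_E)\). Apply Davis--Kahan, or simply the continuity of the simple-eigenline map already used in \Cref{prop:partII-differential-eigenline-map}, to \(B^{(\rho)}+o(1)\) with gap \(\delta\). This gives \(w_\varepsilon\to\pm v_*\). Then set \(\widetilde\phi_\varepsilon=W_\varepsilon w_\varepsilon\). Since \(W_\varepsilon\to\Id\) in operator norm, \(\widetilde\phi_\varepsilon\to v_*\) in \(H_0\) after a sign choice, which proves item~2.

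\textbf{Step 3: \(C^\infty\) upgrade, the main obstacle.} Set \(\phi_\varepsilon=c_\varepsilon U_\varepsilon^{-1}\widetilde\phi_\varepsilon\). Because \(U_\varepsilon^{-1}\) is multiplication by \(e^{-\frac32\varepsilon\rho}\to1\) in \(C^\infty\), it suffices to prove \(C^\infty\) convergence of \(\widetilde\phi_\varepsilon\). The normalizing constants satisfy \(c_\varepsilon\to1\): \(U_\varepsilon\) is unitary, so in fact \(c_\varepsilon=1\).

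For the convergence, use the eigen-equation
\[
-\Delta_{g_\varepsilon}\phi_\varepsilon=\lambda_1(g_\varepsilon)\phi_\varepsilon,
\]
where \(\lambda_1(g_\varepsilon)\to168\) and the coefficients of \(\Delta_{g_\varepsilon}\) converge to those of \(\Delta_{g_0}\) in \(C^\infty\). Elliptic estimates uniform in \(\varepsilon\) give \(\|\phi_\varepsilon\|_{H^{k+2}}\le C_k(\|\phi_\varepsilon\|_{H^k}+\|\phi_\varepsilon\|_{L^2})\). By induction \(\phi_\varepsilon\) is bounded in every \(H^k\).

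Next write
\[
\Delta_{g_0}(\phi_\varepsilon-v_*)=(\Delta_{g_0}-\Delta_{g_\varepsilon})\phi_\varepsilon-\lambda_1(g_\varepsilon)\phi_\varepsilon+168\,v_*.
\]
The right-hand side tends to \(0\) in each \(H^k\), using the uniform bounds together with \(L^2\) convergence and interpolation. Elliptic regularity for \(\Delta_{g_0}\) then gives \(\phi_\varepsilon\to v_*\) in every \(H^k\). Sobolev embedding upgrades this to convergence in \(C^\infty\), which proves item~3.

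The delicate points are the uniformity of the elliptic constants in \(\varepsilon\) and the bookkeeping between the two normalizations, \(L^2(g_0)\) versus \(L^2(g_\varepsilon)\). Both are routine once the family \(g_\varepsilon\) is smooth.
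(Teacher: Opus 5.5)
Your proposal is correct and follows the paper's proof essentially step for step. You both reduce to $T_\varepsilon=168\,I_E+\varepsilon B^{(\rho)}+o(\varepsilon)$, get simplicity and eigenvector convergence from finite-dimensional perturbation theory, transport back through $W_\varepsilon$ and $U_\varepsilon$, and upgrade to $C^\infty$ via uniform elliptic estimates. Your Weyl and Davis--Kahan argument simply makes explicit what the paper calls standard, and you finish with interpolation and a difference equation where the paper uses a subsequence-plus-uniqueness-of-limit argument; both endings are valid.
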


\begin{proof}
Let \(T_\varepsilon\) be the finite-dimensional transported operator of
\Cref{prop:partII-cluster-reduction}. By that proposition,
\[
T_\varepsilon=168\,I_E+\varepsilon B^{(\rho)}+o(\varepsilon)
\qquad\text{in operator norm on }E.
\]
Since \(B^{(\rho)}\) has a simple lowest eigenvalue, standard
finite-dimensional perturbation theory implies that, for all sufficiently small
\(\varepsilon>0\), the operator \(T_\varepsilon\) has a simple lowest
eigenvalue, and its normalized lowest eigenvector \(u_\varepsilon\in E\) may be
chosen so that
\[
u_\varepsilon\to v_*
\qquad\text{in }E.
\]

By \Cref{prop:partII-cluster-reduction}(1), for all sufficiently small
\(\varepsilon>0\), the spectrum of \(\widetilde L_\varepsilon\) in the interval
\((\gamma_-,\gamma_+)\) consists exactly of the perturbed \(168\)-cluster, and
there is no positive spectrum below \(\gamma_-\). Therefore the lowest
eigenvalue in that cluster is exactly the first positive eigenvalue of
\(\widetilde L_\varepsilon\), hence also of \(-\Delta_{g_\varepsilon}\). Since
that lowest eigenvalue of \(T_\varepsilon\) is simple, it follows that
\(\lambda_1(g_\varepsilon)\) is simple for all sufficiently small
\(\varepsilon>0\). This proves \((1)\).

Now define
\[
\widetilde\phi_\varepsilon=W_\varepsilon u_\varepsilon\in E_\varepsilon,
\]
where \(W_\varepsilon:E\to E_\varepsilon\) is the unitary spectral transport of
\Cref{prop:partII-cluster-reduction}. Then \(\widetilde\phi_\varepsilon\) is a
normalized eigenvector of \(\widetilde L_\varepsilon\) associated with the
lowest eigenvalue in the perturbed \(168\)-cluster. Since
\[
W_\varepsilon\to \Id_E
\qquad\text{in operator norm on }E,
\]
and \(u_\varepsilon\to v_*\) in \(E\), we obtain
\[
\widetilde\phi_\varepsilon\to v_*
\qquad\text{in }H_0=L^2(M,g_0),
\]
after the same sign choice. This proves \((2)\).

Finally, define
\[
\phi_\varepsilon=U_\varepsilon^{-1}\widetilde\phi_\varepsilon
\in L^2(M,g_\varepsilon),
\]
where
\[
U_\varepsilon:L^2(M,g_\varepsilon)\to H_0,
\qquad
U_\varepsilon f=e^{\frac32\varepsilon\rho}f,
\]
is the unitary identification. Then \(\phi_\varepsilon\) is an
\(L^2(M,g_\varepsilon)\)-normalized first eigenfunction of
\(-\Delta_{g_\varepsilon}\). Since \(g_\varepsilon\to g_0\) smoothly and the
corresponding eigenvalues remain in the fixed compact interval
\((\gamma_-,\gamma_+)\), elliptic regularity yields uniform \(H^k\)-bounds for
\(\phi_\varepsilon\) for every \(k\). Passing to a subsequence if necessary and
using the \(L^2\)-convergence already proved, we conclude that
\[
\phi_\varepsilon\to v_*
\qquad\text{in }C^\infty(M)
\]
after sign choice. Since the limit is unique, the whole family converges. This
proves \((3)\).
\end{proof}
\begin{proposition}
\label{prop:partII-coefficient-functions}
For each \(j=0,\dots,12\), the coefficient function \(A_j\) is right
\(I^*\)-invariant, descends to a smooth function on
\[
M=S^3/I^*,
\]
belongs to the complexified first eigenspace \(E_\C\), and is a pure
left-weight vector of weight \(12-2j\).
\end{proposition}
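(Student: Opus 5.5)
This statement repeats \Cref{prop:partI-coefficient-functions} verbatim, so the plan is to re-derive it along the same lines. The argument splits into three checks: right invariance, membership in the eigenspace, and the left weight.

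\textbf{Right invariance.} Write \(\pi_{12}\) for the representation of \(SU(2)\) on \(V_{12}=\Sym^{12}(\C^2)\) by substitution, so that \(I_{12,z}=\pi_{12}(z)^{*}I_{12}\) is the pullback of \(I_{12}\) under the linear map \(z\). The input is that \(I_{12}=x^{11}y+11x^6y^6-xy^{11}\) is fixed by \(I^*\). This is the classical Klein invariant: the vertex form of the icosahedron, which is invariant under the binary icosahedral group in its standard embedding. Since \(-I\) acts trivially in even degree, it suffices to check this on generators of \(A_5\) lifted to \(SU(2)\). I would take a diagonal element of order \(10\) with entries \(\zeta^{3},\zeta^{2}\) for \(\zeta=e^{2\pi i/5}\) (this preserves each monomial \(x^{11}y,x^6y^6,xy^{11}\)) together with the standard order-\(2\) lift. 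Then
\[
I_{12,zh}=I_{12}\circ(zh)=(I_{12}\circ z)\circ h ,
\]
and one must confirm that \(h\) fixes the coefficients of \(I_{12,z}\) in the monomial basis. More cleanly, \(I_{12,z}(x,y)=I_{12}(z\cdot(x,y))\) and \(I_{12,zh}(x,y)=I_{12}(z h\cdot(x,y))\). I will handle the order of composition carefully, defining the matrix-coefficient model as \(z\mapsto \pi_{12}(z^{-1})\)-type coefficients if necessary, so that invariance holds under right multiplication. The coefficients \(A_j\) are polynomials in \(\alpha,\bar\alpha,\beta,\bar\beta\), so they are smooth and descend to \(M=SU(2)/I^*\).

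\textbf{Eigenspace membership.} Each \(A_j\) is a matrix coefficient \(z\mapsto\langle \pi_{12}(z)\eta,e_j^\vee\rangle\) of the irreducible representation \(V_{12}\). By Peter--Weyl, such functions restricted to \(S^3\) are spherical harmonics of degree \(12\), equivalently bihomogeneous of bidegree summing to \(12\) and harmonic. Hence they are eigenfunctions of \(-\Delta_{S^3}\) with eigenvalue \(12\cdot14=168\). Being right \(I^*\)-invariant, they descend to eigenfunctions on \(M\) with eigenvalue \(168\), so by \Cref{prop:partI-first-eigenvalue} they lie in \(E_\C\). Alternatively, the span of these coefficients is exactly the \(I^*\)-invariant degree-\(12\) block, which matches the dimension \(13\).

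\textbf{Weight.} Left multiplication by \(a_t=\mathrm{diag}(e^{it},e^{-it})\) gives \(I_{12,a_tz}(x,y)=I_{12,z}(e^{it}x,e^{-it}y)\) up to the composition convention. On \(e_j\) this produces the factor \(e^{i(12-2j)t}\), so \(A_j(a_tz)=e^{i(12-2j)t}A_j(z)\). With the opposite convention the sign flips, and I will fix conventions so that this matches the Hopf action \(z\mapsto e^{it}z\) used in \Cref{lem:partI-exceptional-fiber-period}.

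\textbf{Main obstacle.} I expect the hardest part to be the invariance of \(I_{12}\) under \(I^*\), that is, checking that the polynomial as written is invariant under the embedding of \(I^*\) in use. The diagonal generator is immediate. The order-\(2\) generator, which is Klein's matrix with entries built from \(\zeta-\zeta^4\) and \(\zeta^2-\zeta^3\) over \(\sqrt5\), requires a direct computation. I would try to avoid that computation by citing Klein's classical result that \(xy(x^{10}+11x^5y^5-y^{10})\) is the icosahedral vertex form, noting that it equals \(I_{12}\).
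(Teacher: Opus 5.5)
Your route is the paper's: invariance of Klein's form $I_{12}$ under $I^*$ gives invariance of the coefficients, degree-$12$ harmonics (Peter--Weyl) give membership in $E_\C$, and the diagonal torus gives the weight. The eigenspace step is fine as written. You can also check harmonicity directly: with $u=\alpha x-\bar\beta y$ and $v=\beta x+\bar\alpha y$, the operators $\partial_\alpha\partial_{\bar\alpha}$ and $\partial_\beta\partial_{\bar\beta}$ applied to $I_{12}(u,v)$ give $+xy\,\partial_u\partial_vI_{12}$ and $-xy\,\partial_u\partial_vI_{12}$, which cancel.

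The gap is the left/right bookkeeping. You flagged it but did not resolve it, and your proposed remedy does not work. With the definition as given, $I_{12,z}=I_{12}\circ z$, so $I_{12,zh}=I_{12,z}\circ h$. The step you say ``one must confirm'' is false: $h$ does not fix the coefficients of $I_{12,z}$. For $h=\mathrm{diag}(\zeta^3,\zeta^2)\in I^*$, the first column of $zh$ is $\zeta^3(\alpha,\beta)$, so
\[
A_0(zh)=I_{12}(\zeta^3\alpha,\zeta^3\beta)=\zeta^{36}A_0(z)=\zeta A_0(z)\neq A_0(z).
\]
Likewise $I_{12,a_tz}=(I_{12}\circ a_t)\circ z$, and $I_{12}\circ a_t$ is not a multiple of $I_{12}$, so left multiplication by $a_t$ produces no weight. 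Since $A_0$ itself is not invariant under $z\mapsto zh$, any model with that invariance consists of different functions, for instance $z\mapsto A_j(z^{-1})$. Such a model no longer satisfies $A_0(\alpha,\beta)=I_{12}(\alpha,\beta)$, which \Cref{lem:partI-A0-equals-raw-polynomial} uses. Its weights would also refer to the action $(\alpha,\beta)\mapsto(e^{it}\alpha,e^{-it}\beta)$ rather than $z\mapsto e^{it}z$. So redefining cannot prove the statement about these $A_j$.

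No redefinition is needed. Because $I_{12}\circ h=I_{12}$ for $h\in I^*$, you get $I_{12,hz}=I_{12}\circ h\circ z=I_{12,z}$, i.e.\ $A_j(hz)=A_j(z)$. This is invariance under the linear action of $I^*$ on $(\alpha,\beta)\in S^3\subset\C^2$, which is exactly the deck action defining $M=S^3/I^*$. It is also the action for which the Hopf map in \Cref{lem:partI-exceptional-fiber-period} is equivariant. For the weight, $I_{12,e^{it}z}(x,y)=I_{12,z}(e^{it}x,e^{-it}y)$ gives $A_j(e^{it}z)=e^{i(12-2j)t}A_j(z)$. The Hopf action $z\mapsto e^{it}z$ is right multiplication by $a_t$ in matrix form, and it commutes with the deck action; this is all the later lemmas use. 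So ``right $I^*$-invariant'' and ``left weight'' in the statement must be read as ``deck-invariant'' and ``weight for $z\mapsto e^{it}z$''.

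The paper's own proof makes the same slip. Its step $I_{12}(\pi_{12}(z)\pi_{12}(h)(x,y))=I_{12}(\pi_{12}(z)(x,y))$ does not follow from $\pi_{12}(h)\eta=\eta$. Your suspicion was therefore justified, but the fix is reinterpretation, not a change of definition.

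A minor point: $\mathrm{diag}(\zeta^3,\zeta^2)$ has order $5$; its negative has order $10$.
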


\begin{proof}
This is exactly \Cref{prop:partI-coefficient-functions}.
\end{proof}
\begin{definition}[Explicit splitting directions]
\label{def:partII-coefficient-functions}
Define
\[
F_1=A_1,\qquad F_3=A_3,\qquad F_4=A_4.
\]
Then
\[
F_1,\ F_3,\ F_4\in E_\C,
\qquad
\Re(F_1),\ \Re(F_3),\ \Re(F_4)\in E.
\]
By \Cref{prop:partI-coefficient-functions}, their left weights are
\[
10,\qquad 6,\qquad 4,
\]
respectively.
\end{definition}

\begin{remark}
The repaired Part~I uses \(A_0\) to construct the order-\(2\) splitting
direction there. In Part~II the nearby minimal-Morse construction uses instead
the three explicit directions \(A_1,A_3,A_4\), because their weights are adapted
to the three exceptional fibers.
\end{remark}

\section{Restriction-selection on the exceptional fibers}
\label{sec:partII-restriction-selection}

We now use the exceptional-fiber period statement already proved in Part~I.

\begin{lemma}[Restriction-selection by stabilizer divisibility]
\label{lem:partII-weight-divisibility}
Let \(C_m\subset M\) be an exceptional Hopf fiber of order \(m\in\{5,3,2\}\).
If \(G\in E_\C\) is a pure left-weight vector of weight \(\ell\), then the
restriction of \(G\) to \(C_m\) vanishes unless \(2m\mid \ell\). In
particular:
\[
F_1|_{C_3}=F_1|_{C_2}=0,
\qquad
F_3|_{C_5}=F_3|_{C_2}=0,
\qquad
F_4|_{C_5}=F_4|_{C_3}=0.
\]
\end{lemma}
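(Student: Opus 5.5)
The plan is to reduce the general claim directly to \Cref{lem:partI-exceptional-fiber-period} and then verify the three particular vanishing statements by divisibility arithmetic on the weights recorded in \Cref{def:partII-coefficient-functions}.

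\textbf{General statement.} Let \(z\in S^3\) lie on a lift of \(C_m\). By \Cref{lem:partI-exceptional-fiber-period}, the full preimage in \(I^*\) of the stabilizer of the Hopf image of \(z\) is cyclic of order \(2m\). This subgroup acts on the lifted Hopf circle \(t\mapsto e^{it}z\) by the shift \(t\mapsto t+2\pi/(2m)\). Since \(G\) is right \(I^*\)-invariant (being in \(E_\C\), as a function on \(M\)), the values of \(G\) at two points of the lifted circle that are identified in \(M\) must coincide.

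On the other hand, pure left weight \(\ell\) gives \(G(e^{it}z)=e^{i\ell t}G(z)\). Evaluating at \(t=2\pi/(2m)\), where the point is identified with \(z\) itself, yields
\[
G(z)=e^{2\pi i\ell/(2m)}G(z).
\]
Hence \(G(z)=0\) unless \(e^{2\pi i\ell/(2m)}=1\), that is, unless \(2m\mid\ell\). The same relation \(G(e^{it}z)=e^{i\ell t}G(z)\) then forces \(G\) to vanish identically along the whole lifted circle, so \(G|_{C_m}\equiv0\).

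The one step I expect to need care is that "identified" really means the two points \(e^{i\pi/m}z\) and \(z\) are the same point of \(M\). This amounts to writing the stabilizer generator \(h\) as right multiplication with \(zh=e^{i\pi/m}z\) (or its conjugate, which only changes the sign of the root of unity and so not the divisibility condition). This is exactly the content asserted in \Cref{lem:partI-exceptional-fiber-period}, so I will cite it rather than reprove it.

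\textbf{Particular cases.} With weights \(10,6,4\) for \(F_1,F_3,F_4\) and moduli \(2m\in\{10,6,4\}\):
\begin{itemize}
\item \(F_1\), weight \(10\): \(6\nmid 10\) and \(4\nmid 10\), so \(F_1\) vanishes on \(C_3\) and \(C_2\).
\item \(F_3\), weight \(6\): \(10\nmid 6\) and \(4\nmid 6\), so \(F_3\) vanishes on \(C_5\) and \(C_2\).
\item \(F_4\), weight \(4\): \(10\nmid 4\) and \(6\nmid 4\), so \(F_4\) vanishes on \(C_5\) and \(C_3\).
\end{itemize}
These are precisely the displayed identities, which completes the proof.
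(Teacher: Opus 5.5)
Your proof is correct and takes the paper's route: the paper cites \Cref{lem:partI-exceptional-fiber-period} for the divisibility criterion and then reads off the three vanishing statements from the weights \(10,6,4\), exactly as you do. Your extra step, \(G(z)=e^{2\pi i\ell/(2m)}G(z)\) forcing \(G(z)=0\) when \(2m\nmid\ell\), states precisely what that lemma's phrase ``descends nontrivially only if \(2m\mid\ell\)'' means, and it is a welcome clarification.
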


\begin{proof}
By \Cref{lem:partI-exceptional-fiber-period}, a pure left-weight vector of
weight \(\ell\) restricts nontrivially to \(C_m\) only if \(2m\mid \ell\).
Now \(F_1,F_3,F_4\) have weights \(10,6,4\), respectively, by
\Cref{def:partII-coefficient-functions}. The stated vanishing relations follow
immediately.
\end{proof}

\begin{lemma}[Target nonvanishing on the three exceptional fibers]
\label{lem:partII-target-nonvanishing}
One has
\[
F_1|_{C_5}\neq 0,\qquad
F_3|_{C_3}\neq 0,\qquad
F_4|_{C_2}\neq 0.
\]
Moreover, each surviving restriction descends to a nonzero trigonometric mode
of quotient frequency \(1\) on the corresponding exceptional circle.
\end{lemma}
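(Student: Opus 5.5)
The plan has two parts. First I compute quotient frequencies with \Cref{lem:partI-exceptional-fiber-period}. Then I check nonvanishing by evaluating each coefficient function at an explicit lift of each exceptional fiber.

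For the frequencies, \Cref{def:partII-coefficient-functions} gives the left weights of \(F_1,F_3,F_4\) as \(10,6,4\). The orders are \(m=5,3,2\), so \(2m=10,6,4\) respectively. In each case \(2m\mid\ell\), and the quotient frequency is \(\ell/(2m)=1\). So once nonvanishing is known, each restriction along the lifted Hopf circle has the form
\[
G(e^{it}z)=e^{i\ell t}G(z),
\]
which descends to \(C_m\) as a nonzero complex exponential of frequency \(1\).

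Nonvanishing reduces to a single value. By the frequency computation, \(F|_{C_m}\) vanishes identically if and only if \(F(z_m)=0\) for one lift \(z_m\). So it suffices to exhibit lifts \(z_5,z_3,z_2\) with \(A_1(z_5)\neq0\), \(A_3(z_3)\neq0\), \(A_4(z_2)\neq0\).

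For \(C_5\), take \(z_5=(1,0)\), as in the proof of \Cref{prop:partI-explicit-morse}. Then \(I_{12,z_5}=I_{12}\), so \(A_1(z_5)\) is the coefficient of \(x^{11}y\) in \(I_{12}\), which equals \(1\neq0\).

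For \(C_3\), take \(z_3=(r,se^{-i\pi/4})\) from \Cref{lem:partI-H2-C3-C5}. For \(C_2\), take \(z_2=\tfrac1{\sqrt2}(i,1)\) from \Cref{lem:partI-H2-restriction}. For each I would expand
\[
I_{12}(\alpha x-\overline\beta y,\ \beta x+\overline\alpha y)
\]
and extract the coefficient of \(x^{9}y^{3}\), respectively \(x^{8}y^{4}\). Each such coefficient is a finite sum over the three monomials of \(I_{12}\), each expanded binomially. This is a routine but tedious computation, which I would record in the appendix alongside the earlier evaluations of \(A_0(z_2)\) and \(A_0(z_3)\).

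The main obstacle is the \(A_3(z_3)\) and \(A_4(z_2)\) evaluations. They are genuine numerical computations with cancellations: for example, the contribution of \(11x^6y^6\) mixes with that of \(x^{11}y-xy^{11}\), and a numerical cancellation to zero cannot be ruled out a priori. If a direct evaluation turned out to vanish, I would first try evaluating at another point of the same fiber. That does not help here, because the restriction is a single exponential: it vanishes either everywhere on the fiber or nowhere. So the fallback would be to replace \(F_3\) or \(F_4\) by another coefficient function \(A_j\) of the same weight modulo \(2m\) with frequency \(\pm1\). For instance, weight \(-6\) corresponds to \(A_9\), and weight \(-4\) to \(A_8\). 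I would also use the structure \(A_{12-j}\sim\pm\overline{A_j}\) coming from the quaternionic structure of \(V_{12}\) to relate these choices.

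For simplification, the key tool is that at \(z_2\), \(\beta=\overline\alpha\cdot i\)-type relations collapse the substitution into a rotation by a fixed matrix. I would therefore compute \(A_4(z_2)\) as the \(e_4\)-coordinate of \(\pi_{12}(z_2)I_{12}\) using the Wigner \(d\)-matrix entries at angle \(\pi/2\).

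Finally, taking real parts gives \(\Re(F)|_{C_m}=|c|\cos(t+\arg c)\). This is a nonconstant frequency-\(1\) mode with exactly two nondegenerate critical points, which is what the later construction requires.
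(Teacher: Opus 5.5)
Your frequency bookkeeping and the cases \(C_5\) and \(C_2\) are sound. The point \([1:0]\) is a zero of \(I_{12}\), hence a vertex. The point \([i:1]\) is fixed by \(\bigl(\begin{smallmatrix}0&-1\\1&0\end{smallmatrix}\bigr)\), which preserves \(I_{12}\), hence it is an edge midpoint. For \(A_4(z_2)\) you do not need Wigner matrices: with \(p=x+iy\), \(q=x-iy\) one has \(I_{12,z_2}=-\frac1{64}\bigl(i\,pq(p^{10}+q^{10})+11(pq)^6\bigr)\), which gives \(A_4(z_2)=-\frac{165}{64}(1+2i)\) at once.

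\textbf{The gap: the order-\(3\) lift.} The point \(z_3=(r,se^{-i\pi/4})\) that you borrow from Part~I does not lie on an order-\(3\) fiber of the group fixing \(I_{12}\).
\begin{itemize}
\item The Hopf map \(\mathfrak h\) intertwines the action of \(SU(2)\) on \((\alpha,\beta)\) with rotations of \(S^2\).
\item \(\mathfrak h(z_5)=(0,0,1)\) is a vertex of the icosahedron \(\mathfrak h(\{I_{12}=0\})\).
\item So the image of any order-\(3\) lift must make a vertex--face-center angle with \((0,0,1)\). That is, its third coordinate must lie in \(\{\pm\tau^2/\sqrt{3(\tau+2)},\,\pm1/(\tau\sqrt{3(\tau+2)})\}\approx\{\pm0.795,\pm0.188\}\).
\item But \(\mathfrak h(z_3)=(1,1,1)/\sqrt3\) has third coordinate \(1/\sqrt3\).
\end{itemize}
The point \((1,1,1)/\sqrt3\) is a face center of the coordinate icosahedron of the sextic \(P\). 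In that icosahedron \((0,0,1)\) is an edge midpoint, not a vertex, so the two icosahedra are different. Equivalently, Klein's face centers have \(t^5\in\mathbb R\) for \(t=\alpha/\beta\), while at \(z_3\) one has \(\arg t^5=5\pi/4\).

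So \(z_3\) lies on a regular fiber, where weight \(6\) forces nothing, and any value of \(A_3(z_3)\) is irrelevant to \(F_3|_{C_3}\). The paper's proof evaluates at the same \(z_3\), so it has the same defect. No amount of care in the expansion closes this.

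\textbf{A repair.} The statement is nevertheless true, and the order-\(3\) case can be proved without locating a face center explicitly. Let \(z=(\alpha,\beta)\) be a genuine order-\(3\) lift and \(g_z=\bigl(\begin{smallmatrix}\alpha&-\bar\beta\\ \beta&\bar\alpha\end{smallmatrix}\bigr)\). By the exceptional-fiber period lemma, \(A_j(z)=0\) unless \(3\mid j\), so
\[
I_{12,z}(t,1)=A_0(z)\prod_{k=1}^{4}\bigl(t^3-c_k\bigr),\qquad A_3(z)=-A_0(z)\sum_{k=1}^{4}c_k .
\]
Here \(A_0(z)=I_{12}(\alpha,\beta)\neq0\), because a face center is not a vertex.

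The roots are the vertices of the rotated icosahedron \(g_z^{-1}(\text{vertices})\), which has a face center at \(t=\infty\).
\begin{itemize}
\item They form four orbits of \(t\mapsto e^{2\pi i/3}t\), at polar angles \(\theta_1<\theta_2<\pi-\theta_2<\pi-\theta_1\).
\item The last two orbits are antipodal to the first two.
\item \(\cos\theta_1\) and \(\cos\theta_2\) are the two positive values listed above.
\end{itemize}
Since \(|t|=\cot(\theta/2)\) and the antipodal map is \(t\mapsto-1/\bar t\), one gets
\[
\sum_kc_k=e^{i\psi_1}(a^3-a^{-3})+e^{i\psi_2}(b^3-b^{-3}),\qquad a=\cot(\theta_1/2)>b=\cot(\theta_2/2)>1 .
\]
Because \(x\mapsto x^3-x^{-3}\) is increasing, \(\bigl|\sum_kc_k\bigr|\ge(a^3-a^{-3})-(b^3-b^{-3})>0\). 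Thus \(A_3(z)\neq0\), and your last paragraph then yields the nonzero frequency-\(1\) real mode on \(C_3\).
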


\begin{proof}
\medskip

\noindent\textbf{The order-\(5\) fiber.}
Take the vertex lift
\[
z_5=(1,0)\in S^3.
\]
Then
\[
I_{12,z_5}(x,y)=I_{12}(x,y),
\]
so
\[
F_1(z_5)=A_1(z_5)=1\neq0.
\]
Hence \(F_1|_{C_5}\neq0\).

\medskip

\noindent\textbf{The order-\(2\) fiber.}
Take the edge-midpoint lift
\[
z_2=\frac{1}{\sqrt2}(i,1).
\]
Then
\[
\alpha=\frac{i}{\sqrt2},
\qquad
\beta=\frac{1}{\sqrt2},
\qquad
\overline\beta=\frac{1}{\sqrt2},
\qquad
\overline\alpha=-\frac{i}{\sqrt2}.
\]
Therefore
\[
I_{12,z_2}(x,y)
=
I_{12}\!\left(\frac{ix-y}{\sqrt2},\,\frac{x-iy}{\sqrt2}\right).
\]
Now \(F_4(z_2)=A_4(z_2)\) is the coefficient of \(x^8y^4\) in this polynomial.
A direct expansion, recorded in
Appendix~\ref{appendix:explicit-computations}, gives
\[
A_4(z_2)= -\frac{165}{64}-\frac{165}{32}i.
\]
In particular,
\[
F_4(z_2)\neq0.
\]
Hence \(F_4|_{C_2}\neq0\).

\medskip

\noindent\textbf{The order-\(3\) fiber.}
Define
\[
r=\sqrt{\frac{1+1/\sqrt3}{2}},
\qquad
s=\sqrt{\frac{1-1/\sqrt3}{2}},
\qquad
z_3=(r,\ s e^{-i\pi/4})\in S^3.
\]
Under the Hopf map
\[
\mathfrak h(\alpha,\beta)
=
\bigl(2\Re(\alpha\overline\beta),\,2\Im(\alpha\overline\beta),\,|\alpha|^2-|\beta|^2\bigr),
\]
one computes
\[
\mathfrak h(z_3)=\left(\frac1{\sqrt3},\frac1{\sqrt3},\frac1{\sqrt3}\right),
\]
which is a face-center point. Therefore the Hopf fiber through \(z_3\) is an
order-\(3\) exceptional fiber.

Now
\[
\beta=s e^{-i\pi/4}=s\frac{1-i}{\sqrt2},
\qquad
\overline\beta=s\frac{1+i}{\sqrt2}.
\]
Hence
\[
I_{12,z_3}(x,y)
=
I_{12}\!\left(rx-s\frac{1+i}{\sqrt2}y,\ s\frac{1-i}{\sqrt2}x+ry\right).
\]
The coefficient of \(x^9y^3\) is \(F_3(z_3)=A_3(z_3)\), and a direct expansion,
recorded in Appendix~\ref{appendix:explicit-computations}, yields
\[
A_3(z_3)
=
-\frac{55\sqrt3}{216}-\frac{55}{216}
+\left(\frac{55\sqrt3}{216}-\frac{55}{216}\right)i.
\]
In particular,
\[
F_3(z_3)\neq0.
\]
Hence \(F_3|_{C_3}\neq0\).

\medskip

Finally, by \Cref{lem:partI-exceptional-fiber-period}, the quotient frequency of
a surviving restriction of a weight-\(\ell\) vector to an order-\(m\) fiber is
\[
\frac{\ell}{2m}.
\]
Therefore the surviving restrictions on
\[
C_5,\qquad C_3,\qquad C_2
\]
have quotient frequency
\[
\frac{10}{10}=1,\qquad \frac{6}{6}=1,\qquad \frac{4}{4}=1,
\]
respectively.
\end{proof}

\section{Explicit nearby minimal-Morse lines}
\label{sec:partII-explicit-minimal-line}

We now prove the explicit geometric input. The starting point is the global
critical-circle description of the invariant seed established in Part~I:
\Cref{lem:partI-critical-set-F0,lem:partI-F0-morse-bott-all}.

\begin{theorem}[Explicit nearby minimal-Morse line]
\label{thm:partII-explicit-nearby-minimal-line}
There exist arbitrarily small nonzero parameters
\[
(\varepsilon_2,\varepsilon_3,\varepsilon_5)\in\R^3
\]
such that the function
\[
\Psi_{\varepsilon_2,\varepsilon_3,\varepsilon_5}
=
F_0+\varepsilon_2\,\Re(F_4)+\varepsilon_3\,\Re(F_3)+\varepsilon_5\,\Re(F_1)
\]
is Morse with exactly six critical points on \(M=S^3/I^*\). Equivalently, its
projective line is a minimal-Morse line in \(E\), and such lines may be chosen
arbitrarily close to the seed line \([F_0]\).
\end{theorem}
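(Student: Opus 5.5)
The plan is to repeat the argument of \Cref{prop:partI-explicit-morse}, with the three directions \(\Re(F_1),\Re(F_3),\Re(F_4)\) in place of \(H,K\). Each of these directions is visible on exactly one exceptional circle, and there it restricts to a frequency-\(1\) mode. By \Cref{cor:partI-F0-morse-bott-all}, \(\Crit(F_0)=C_5\sqcup C_3\sqcup C_2\) and \(F_0\) is Morse--Bott along each circle with nondegenerate transverse Hessian. I will choose pairwise disjoint tubular neighborhoods \(U_5,U_3,U_2\) on which the normal form of \Cref{lem:MB-circle-splitting} holds, and set \(L=M\setminus(U_5\cup U_3\cup U_2)\). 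On the compact set \(L\) one has \(|\nabla F_0|_{g_0}\ge c>0\). Since \(E\) is finite-dimensional, the gradients of \(\Re(F_1),\Re(F_3),\Re(F_4)\) are uniformly bounded, so for \(|\varepsilon|:=\max|\varepsilon_i|\) small the function \(\Psi=\Psi_{\varepsilon_2,\varepsilon_3,\varepsilon_5}\) has no critical points in \(L\).

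Next I analyze each tube separately. By \Cref{lem:partII-weight-divisibility}, on \(C_5\) only \(\Re(F_1)\) survives; by \Cref{lem:partII-target-nonvanishing}, its restriction descends to a nonzero real frequency-\(1\) mode \(a\cos(\theta-\theta_0)\) on the quotient circle. Such a mode is Morse with exactly two critical points. The same holds for \(\Re(F_3)\) on \(C_3\) and \(\Re(F_4)\) on \(C_2\). I must take care because \Cref{lem:MB-circle-splitting} is stated for a single-parameter perturbation \(f+\varepsilon h\), whereas \(\Psi\) has three parameters. To keep it one-parameter, I take \(\varepsilon_2=s\,c_2\), \(\varepsilon_3=s\,c_3\), \(\varepsilon_5=s\,c_5\) with fixed nonzero constants \(c_i\) (for instance all equal to \(1\)), and apply the lemma with
\[
h=c_2\Re(F_4)+c_3\Re(F_3)+c_5\Re(F_1).
\]
Then \(h|_{C_5}=c_5\Re(F_1)|_{C_5}\), \(h|_{C_3}=c_3\Re(F_3)|_{C_3}\), and \(h|_{C_2}=c_2\Re(F_4)|_{C_2}\), each a nonzero frequency-\(1\) mode with two nondegenerate critical points. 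The lemma then gives, for small \(s\neq0\), exactly two nondegenerate critical points of \(\Psi\) in each \(U_m\). Together with the absence of critical points in \(L\), this yields exactly six, all nondegenerate, so \(\Psi\) is Morse. By \Cref{cor:partII-six-critical-points} it is minimal. As \(s\to0\) the line \([\Psi]\) tends to \([F_0]\), which gives the closeness claim.

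The main obstacle is uniformity in the bookkeeping. The tubes \(U_m\) must be fixed before shrinking the parameter, and the lemma's conclusion "no other critical points in a sufficiently small tubular neighborhood" must be reconciled with the fixed \(U_m\) used for the gradient bound on \(L\). I will handle this by first fixing the smaller tubes \(U_m'\) delivered by \Cref{lem:MB-circle-splitting} for this \(h\), and then redefining \(L=M\setminus\bigcup U_m'\) before choosing \(s\). Since the lemma's neighborhood does not depend on \(s\), this order of choices is legitimate. A secondary point is that "descends to a nonzero frequency-\(1\) mode" yields a Morse restriction: the restriction of a weight-\(\ell\) vector has the form \(\Re(w e^{i\theta})\) with \(w\neq0\), whose two critical points have nonzero second derivative \(\mp|w|\).
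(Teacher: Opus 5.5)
Your proposal is correct and follows the paper's proof. A gradient bound excludes critical points away from \(C_5\sqcup C_3\sqcup C_2\); on each circle exactly one of \(\Re(F_1),\Re(F_3),\Re(F_4)\) survives as a nonzero frequency-\(1\) mode, so \Cref{lem:MB-circle-splitting} gives exactly two nondegenerate critical points per circle, and your restriction to a ray \((\varepsilon_2,\varepsilon_3,\varepsilon_5)=s(c_2,c_3,c_5)\), with the tubes fixed before \(s\) is shrunk, is a genuine tightening: the lemma is one-parameter, and the paper's ``for all sufficiently small nonzero parameters'' tacitly needs the parameters to be comparable, because near \(C_m\) the off-circle terms contribute \(O(|\varepsilon|^2)\) to the reduced function, which the lemma does not control once \(|\varepsilon_m|\) is of that order or smaller.
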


\begin{proof}
By \Cref{lem:partI-critical-set-F0,lem:partI-F0-morse-bott-all}, the invariant
seed satisfies
\[
\Crit(F_0)=C_5\sqcup C_3\sqcup C_2,
\]
and \(F_0\) is Morse--Bott along each of these three critical circles, with
nondegenerate transverse Hessian.

Choose pairwise disjoint tubular neighborhoods
\[
U_5,\qquad U_3,\qquad U_2
\]
of \(C_5,C_3,C_2\), respectively, so small that the Morse--Bott normal form
holds in each one. Let
\[
K=M\setminus(U_5\cup U_3\cup U_2).
\]
Since \(K\) is compact and contains no critical point of \(F_0\), there exists
\(c>0\) such that
\[
|\nabla F_0|_{g_0}\ge c
\qquad\text{on }K.
\]
Because \(E\) is finite-dimensional, the perturbation
\[
\varepsilon_2\,\Re(F_4)+\varepsilon_3\,\Re(F_3)+\varepsilon_5\,\Re(F_1)
\]
tends to zero in \(C^1\) as
\[
(\varepsilon_2,\varepsilon_3,\varepsilon_5)\to(0,0,0).
\]
Hence, for all sufficiently small parameters,
\[
|\nabla \Psi_{\varepsilon_2,\varepsilon_3,\varepsilon_5}|_{g_0}\ge \frac c2>0
\qquad\text{on }K.
\]
Therefore \(\Psi_{\varepsilon_2,\varepsilon_3,\varepsilon_5}\) has no critical
points outside \(U_5\cup U_3\cup U_2\).

Now fix \(m\in\{5,3,2\}\). By \Cref{lem:partI-F0-morse-bott-all}, \(F_0\) is
Morse--Bott along \(C_m\). Therefore there exist local coordinates
\[
(\theta,x,y)\in S^1\times\R^2
\]
on \(U_m\), with \(C_m=\{x=y=0\}\), such that
\[
F_0(\theta,x,y)=c_m+Q_m(x,y)+R_m(\theta,x,y),
\qquad
R_m(\theta,x,y)=O(\|(x,y)\|^3),
\]
where \(Q_m\) is a nondegenerate quadratic form in the normal variables.

Restrict \(\Psi_{\varepsilon_2,\varepsilon_3,\varepsilon_5}\) to \(C_m\). By
\Cref{lem:partII-weight-divisibility}, all cross-terms vanish, and by
\Cref{lem:partII-target-nonvanishing}, exactly one perturbation term survives:
\[
\Psi_{\varepsilon_2,\varepsilon_3,\varepsilon_5}|_{C_5}
=
F_0|_{C_5}+\varepsilon_5\,\Re(F_1)|_{C_5},
\]
\[
\Psi_{\varepsilon_2,\varepsilon_3,\varepsilon_5}|_{C_3}
=
F_0|_{C_3}+\varepsilon_3\,\Re(F_3)|_{C_3},
\]
\[
\Psi_{\varepsilon_2,\varepsilon_3,\varepsilon_5}|_{C_2}
=
F_0|_{C_2}+\varepsilon_2\,\Re(F_4)|_{C_2}.
\]

Each surviving complex restriction is a nonzero quotient frequency-\(1\) mode on
the corresponding quotient circle. We now show directly that the corresponding
real parts are nontrivial.

At the order-\(5\) fiber, we have
\[
F_1(z_5)=A_1(z_5)=1,
\]
so
\[
\Re(F_1)|_{C_5}\not\equiv 0.
\]

At the order-\(2\) fiber, by
\Cref{lem:appendix-explicit-computations},
\[
A_4(z_2)=-\frac{165}{64}-\frac{165}{32}i,
\]
hence
\[
\Re(F_4)(z_2)=-\frac{165}{64}\neq 0,
\]
so
\[
\Re(F_4)|_{C_2}\not\equiv 0.
\]

At the order-\(3\) fiber, again by
\Cref{lem:appendix-explicit-computations},
\[
A_3(z_3)
=
-\frac{55\sqrt3}{216}-\frac{55}{216}
+
\left(\frac{55\sqrt3}{216}-\frac{55}{216}\right)i,
\]
so
\[
\Re(F_3)(z_3)\neq 0,
\]
and hence
\[
\Re(F_3)|_{C_3}\not\equiv 0.
\]

Therefore each of the real-valued circle restrictions
\[
\Re(F_1)|_{C_5},\qquad \Re(F_3)|_{C_3},\qquad \Re(F_4)|_{C_2}
\]
is a nonzero real trigonometric mode of quotient frequency \(1\), and thus each
has exactly two nondegenerate critical points on the corresponding quotient
circle.

By \Cref{lem:MB-circle-splitting}, for all sufficiently small nonzero
parameters, each of those two circle-critical points lifts uniquely to a nearby
nondegenerate critical point of the full function in the corresponding tubular
neighborhood, and there are no others there. Therefore
\[
\#\Crit(\Psi_{\varepsilon_2,\varepsilon_3,\varepsilon_5}\cap U_5)=2,
\qquad
\#\Crit(\Psi_{\varepsilon_2,\varepsilon_3,\varepsilon_5}\cap U_3)=2,
\qquad
\#\Crit(\Psi_{\varepsilon_2,\varepsilon_3,\varepsilon_5}\cap U_2)=2.
\]

Since there are no critical points outside \(U_5\cup U_3\cup U_2\), it follows
that
\[
\#\Crit(\Psi_{\varepsilon_2,\varepsilon_3,\varepsilon_5})=2+2+2=6.
\]
All six are nondegenerate, so
\[
\Psi_{\varepsilon_2,\varepsilon_3,\varepsilon_5}
\]
is Morse with exactly six critical points.

By \Cref{prop:partI-minimal-six}, a Morse function on \(M=S^3/I^*\) is minimal
if and only if it has exactly six critical points. Hence
\[
\Psi_{\varepsilon_2,\varepsilon_3,\varepsilon_5}
\]
is minimal Morse.

Since the parameters may be chosen arbitrarily small and nonzero, the
corresponding projective lines are arbitrarily close to \([F_0]\). This proves
the theorem.
\end{proof}

\begin{corollary}
\label{cor:partII-spectral-consequence-from-explicit-line}
There exist minimal-Morse lines in \(E_{168}(g_0)\) arbitrarily close to
\([F_0]\). Consequently, by \Cref{cor:partII-realizable-neighborhood}, these
lines are realized as simple lowest eigendirections of explicit first-order
conformal splitting operators in \(\mathscr B\).
\end{corollary}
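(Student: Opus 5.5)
The corollary follows by putting \Cref{thm:partII-explicit-nearby-minimal-line} together with \Cref{cor:partII-realizable-neighborhood}. I will first fix the open neighborhood \(U\subset\mathbb P(E)\) of \([F_0]\) given by \Cref{cor:partII-realizable-neighborhood}, so that every line in \(U\) equals \(\ell(A)\) for some \(A\in\Omega\subset\mathscr B\). Next I will take \(\Psi_{\varepsilon_2,\varepsilon_3,\varepsilon_5}\) from \Cref{thm:partII-explicit-nearby-minimal-line}. Since \(F_0\neq0\) and \(\Re(F_1),\Re(F_3),\Re(F_4)\) are fixed vectors of the finite-dimensional space \(E\), we have \(\Psi_{\varepsilon}\to F_0\) in \(E\) as \(\varepsilon\to0\). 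The map \(E\setminus\{0\}\to\mathbb P(E)\) is continuous, so \([\Psi_\varepsilon]\to[F_0]\). For sufficiently small nonzero parameters the theorem gives a Morse function with exactly six critical points, and \([\Psi_\varepsilon]\) lies in \(U\) and in any prescribed neighborhood of \([F_0]\). This proves the first assertion.

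Every nonzero multiple of \(\Psi_\varepsilon\) has the same critical set and is Morse, so the whole line is minimal-Morse by \Cref{prop:partI-minimal-six}. Because \([\Psi_\varepsilon]\in U\), there is \(A\in\Omega\) with \(\ell(A)=[\Psi_\varepsilon]\). In other words, \(A\) has a simple lowest eigenvalue and its eigenline is \(\mathbb R\Psi_\varepsilon\).

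For the word ``conformal'' I will apply \Cref{prop:partII-every-B-realizable-by-rho}. It gives the explicit \(\rho_A=L^{-1}q_A\), which satisfies \(B^{(\rho_A)}=A\), so \(A\) is a first-order conformal splitting operator.

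The argument has no real obstacle. The only step needing care is that \(U\) is fixed before the parameters are chosen, since the theorem allows arbitrarily small parameters. It also helps to note that the submersion argument actually gives an open image near \([F_0]\), so small perturbations of \(A\) inside \(\Omega\) do not spoil the construction.
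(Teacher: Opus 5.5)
Your proposal is correct relative to the paper's earlier results, and it is the paper's own argument. The first assertion is \Cref{thm:partII-explicit-nearby-minimal-line}, and the realizability is \Cref{cor:partII-realizable-neighborhood}. Your added details are exactly what the paper's two-line proof leaves implicit: fixing \(U\) before choosing the parameters, needing only that good parameters exist in every ball around \(0\) (which is all the theorem asserts, not that all small parameters work), and invoking \Cref{prop:partII-every-B-realizable-by-rho} to produce \(\rho_A\).

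One caution applies equally to the paper. The combination relies on \Cref{prop:partII-Ko-equals-F0-line} to identify the seed line \(\mathbb RK_o\) with the Hopf-lift seed perturbed in \Cref{thm:partII-explicit-nearby-minimal-line}. In the matrix-coefficient model these lines appear to differ: \(K_o=\sum_j \overline{A_j(o)}\,A_j/\|A_j\|^2\) has nonzero \(A_1\)- and \(A_{11}\)-components, since \(A_1(o)=1\) and \(A_{11}(o)=-1\). A Hopf lift, by contrast, is constant on the circles \(t\mapsto e^{it}z\) and so lies in the weight-\(0\) line \(\C A_6\). That upstream identification, not anything in your reasoning, is where the deduction is vulnerable.
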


\begin{proof}
The first assertion is exactly \Cref{thm:partII-explicit-nearby-minimal-line}.
The spectral realizability statement then follows from
\Cref{cor:partII-realizable-neighborhood}.
\end{proof}

\section{Perturbative existence beyond the spherical metric}
\label{sec:partII-perturbative-existence}

We can now conclude the full perturbative theorem.

\begin{theorem}[Perturbative existence beyond the spherical metric]
\label{thm:partII-perturbative-existence}
There exists a smooth Riemannian metric \(g\) on
\[
M=S^3/I^*
\]
arbitrarily \(C^\infty\)-close to the spherical metric \(g_0\) such that:
\begin{enumerate}[label=\arabic*.]
\item the first positive eigenvalue \(\lambda_1(g)\) is simple;
\item an \(L^2(M,g)\)-normalized first eigenfunction is Morse with exactly six
critical points;
\item hence that first eigenfunction is minimal Morse;
\item consequently \((M,g)\) has property \(P\).
\end{enumerate}
\end{theorem}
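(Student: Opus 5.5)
The plan is to chain the results of Part~II in the following order: explicit minimal-Morse line, then realizing splitter, then conformal factor, then branch selection, then stability of the Morse count, and finally the heat-flow reduction.

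\textbf{Step 1 (choosing the target line).} Let \(U\subset\mathbb P(E)\) be the neighborhood of \([F_0]\) from \Cref{cor:partII-realizable-neighborhood}. By \Cref{thm:partII-explicit-nearby-minimal-line}, choose small nonzero parameters \((\varepsilon_2,\varepsilon_3,\varepsilon_5)\) such that
\[
\Psi=F_0+\varepsilon_2\Re(F_4)+\varepsilon_3\Re(F_3)+\varepsilon_5\Re(F_1)
\]
is Morse with exactly six critical points and \([\Psi]\in U\). Normalize \(v_*=\Psi/\|\Psi\|_{L^2(g_0)}\); this is still Morse with six critical points.

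\textbf{Step 2 (realizing the line by a conformal factor).} Since \([v_*]\in U\subset\ell(\Omega)\), there is an operator \(A\in\mathscr B\) with simple lowest eigenvalue and lowest eigenline \(\mathbb R v_*\). By \Cref{prop:partII-every-B-realizable-by-rho}, there is \(\rho=\rho_A\in C^\infty(M)\) with \(B^{(\rho)}=A\).

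\textbf{Step 3 (branch selection).} Set \(g_\varepsilon=e^{2\varepsilon\rho}g_0\). For small \(\varepsilon>0\), \Cref{thm:partII-branch-selection} gives that \(\lambda_1(g_\varepsilon)\) is simple and that the \(L^2(g_\varepsilon)\)-normalized first eigenfunction \(\phi_\varepsilon\) converges to \(v_*\) in \(C^\infty(M)\), after a sign choice. Because \(g_\varepsilon\to g_0\) in \(C^\infty\) as \(\varepsilon\to0\), the metric \(g=g_\varepsilon\) can be taken arbitrarily \(C^\infty\)-close to \(g_0\).

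\textbf{Step 4 (the Morse count survives).} Here we cannot invoke \Cref{thm:partII-first-eigenfunction-stability} at \(g_0\), since \(\lambda_1(g_0)\) is not simple. Instead I would argue directly from the \(C^2\) convergence \(\phi_\varepsilon\to v_*\), repeating the proof of that theorem:
\begin{itemize}
\item Take disjoint small balls around the six nondegenerate critical points of \(v_*\). For small \(\varepsilon\), each ball contains exactly one critical point of \(\phi_\varepsilon\), and it is nondegenerate, by \(C^2\)-stability.
\item On the compact complement, \(|\nabla v_*|\ge c>0\), so \(|\nabla\phi_\varepsilon|\ge c/2\) there for small \(\varepsilon\).
\end{itemize}
Hence \(\phi_\varepsilon\) is Morse with exactly six critical points.

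\textbf{Step 5 (conclusion).} By \Cref{cor:partII-six-critical-points}, \(\phi_\varepsilon\) is minimal Morse, and \Cref{thm:partII-simple-first-eigenfunction-implies-P} then yields property \(P\).

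\textbf{Main obstacle.} The delicate point is the compatibility in Steps 1 and 3 between ``close to \([F_0]\)'' and ``\(\varepsilon\) small.'' The neighborhood \(U\) is fixed first and \(v_*\) is chosen inside it, so there is no circularity there. The difficulty is that \(\rho\) depends on \(v_*\), so the admissible range of \(\varepsilon\) depends on the chosen line. This is harmless, because the closeness statement only requires \(\varepsilon\to0\) for that fixed \(\rho\).

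I would also check that Step 4's convergence is genuinely \(C^2\), not merely \(L^2\). This is supplied by part (3) of \Cref{thm:partII-branch-selection}, via elliptic regularity.
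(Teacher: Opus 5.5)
Your proposal is essentially the paper's own proof. It runs through the explicit six-critical-point line from \Cref{thm:partII-explicit-nearby-minimal-line}, the realizing operator from \Cref{cor:partII-realizable-neighborhood}, the conformal factor from \Cref{prop:partII-every-B-realizable-by-rho}, branch selection via \Cref{thm:partII-branch-selection}, \(C^2\)-stability, and then \Cref{thm:partII-simple-first-eigenfunction-implies-P}. Granting the cited results it is correct, and normalizing \(v_*=\Psi/\|\Psi\|\) and arguing stability directly (since \Cref{thm:partII-first-eigenfunction-stability} does not apply at the non-simple \(g_0\)) slightly improve on the paper's wording.

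One thing to watch, shared with the paper: Step~1 needs the realizable neighborhood, which is built around \(\ell(A_0)=[K_o]\), to contain lines near the Hopf lift \([F_0]\). The identification \([K_o]=[F_0]\) asserted in \Cref{prop:partII-Ko-equals-F0} fails. \(F_0\) has Hopf weight \(0\) and so lies on the line of \(A_6\). By contrast, \(K_o=\sum_j\overline{A_j(o)}\,A_j/\|A_j\|^2\) has a nonzero \(A_1\)-component, because \(A_1(o)=A_1(z_5)=1\). One possible repair is to replace the seed \(f\mapsto -f(o)^2\) by \(f\mapsto-\int_{C_5}f^2\). Its simple lowest eigenline can be checked to be the weight-\(0\) line, and the same unique-continuation argument gives the submersion there.
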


\begin{proof}
By \Cref{cor:partII-spectral-consequence-from-explicit-line}, there exist
minimal-Morse lines in \(E\) arbitrarily close to \([F_0]\). Choose one such
line and write it as
\[
[\Psi]\subset E,
\]
where \(\Psi\in E\) is minimal Morse.

By \Cref{cor:partII-realizable-neighborhood}, every projective line
sufficiently close to \([F_0]\) is realized as the simple lowest eigendirection
of some operator in \(\mathscr B\). Hence there exists an operator
\[
A\in\mathscr B
\]
whose lowest eigenspace is exactly the line \([\Psi]\), and whose lowest
eigenvalue is simple.

By \Cref{prop:partII-every-B-realizable-by-rho}, there exists
\[
\rho\in C^\infty(M)
\]
such that
\[
B^{(\rho)}=A.
\]

Let
\[
g_\varepsilon=e^{2\varepsilon\rho}g_0.
\]
Then by \Cref{thm:partII-branch-selection}, for all sufficiently small
\(\varepsilon>0\), the first positive eigenvalue \(\lambda_1(g_\varepsilon)\)
is simple, and the corresponding normalized first eigenfunction
\(\phi_\varepsilon\) satisfies
\[
\phi_\varepsilon \to \Psi
\qquad\text{in }C^\infty(M).
\]

Since \(\Psi\) is Morse with exactly six critical points, standard
\(C^2\)-stability of nondegenerate critical points implies that, for all
sufficiently small \(\varepsilon>0\), the function \(\phi_\varepsilon\) is
again Morse with exactly six critical points. By
\Cref{cor:partII-six-critical-points}, it follows that \(\phi_\varepsilon\) is
minimal Morse. This proves \((1)\)--\((3)\).

Finally, since \(\lambda_1(g_\varepsilon)\) is simple and the corresponding
first eigenfunction is minimal Morse, \Cref{thm:partII-simple-first-eigenfunction-implies-P}
shows that \((M,g_\varepsilon)\) has property \(P\). This proves \((4)\).
\end{proof}

\appendix

\section{Differential of the lowest-eigenline map}
\label{appendix:eigenline-differential}

In this appendix we prove \Cref{prop:partII-differential-eigenline-map}.

\begin{proof}[Proof of \Cref{prop:partII-differential-eigenline-map}]
Let \(A\in\Omega\), let \(v\in E\) be a unit lowest eigenvector, and write
\[
\lambda=\lambda_1(A).
\]
Since \(\lambda\) is simple, we have the orthogonal decomposition
\[
E=\R v\oplus v^\perp,
\]
and the operator
\[
(A-\lambda I)\big|_{v^\perp}:v^\perp\to v^\perp
\]
is invertible.

We identify a neighborhood of \([v]\in \mathbb P(E)\) with a neighborhood of
\(0\in v^\perp\) via the chart
\[
w\in v^\perp \longmapsto [v+w].
\]
In this chart, the differential of the eigenline map is the derivative of the
\(v^\perp\)-component of the perturbed eigenvector.

Let \(H\in\mathscr B\) be arbitrary, and consider the perturbation
\[
A_t=A+tH.
\]
Because the lowest eigenvalue is simple, standard finite-dimensional
perturbation theory yields smooth functions
\[
\lambda(t),\qquad w(t)\in v^\perp,
\]
defined for \(|t|\) small, such that
\[
\lambda(0)=\lambda,\qquad w(0)=0,
\]
and
\[
A_t\bigl(v+w(t)\bigr)=\lambda(t)\bigl(v+w(t)\bigr).
\]
Thus, in the projective chart above,
\[
\ell(A_t)=[v+w(t)].
\]
Therefore
\[
d\ell_A(H)=w'(0)\in v^\perp.
\]

We now compute \(w'(0)\). Differentiate
\[
(A+tH)(v+w(t))=\lambda(t)(v+w(t))
\]
at \(t=0\). Using \(Av=\lambda v\) and \(w(0)=0\), we get
\[
Hv+Aw'(0)=\lambda'(0)v+\lambda w'(0).
\]
Rearranging,
\[
(A-\lambda I)w'(0)+Hv-\lambda'(0)v=0.
\]
Projecting orthogonally onto \(v^\perp\), we obtain
\[
(A-\lambda I)w'(0)+P_{v^\perp}(Hv)=0.
\]
Because \((A-\lambda I)|_{v^\perp}\) is invertible,
\[
w'(0)=-(A-\lambda I)^{-1}P_{v^\perp}(Hv).
\]
Hence
\[
d\ell_A(H)=-(A-\lambda_1(A)I)^{-1}P_{v^\perp}(Hv),
\]
which is the asserted formula.

For the surjectivity criterion, note that
\[
(A-\lambda_1(A)I)^{-1}:v^\perp\to v^\perp
\]
is an isomorphism. Therefore the image of \(d\ell_A\) is exactly
\[
(A-\lambda_1(A)I)^{-1}\bigl(P_{v^\perp}(\mathscr Bv)\bigr),
\]
so \(d\ell_A\) is surjective if and only if
\[
P_{v^\perp}(\mathscr Bv)=v^\perp.
\]
This proves the proposition.
\end{proof}

\section{Explicit coefficient computations}
\label{appendix:explicit-computations}

In this appendix we record the direct coefficient evaluations used in
\Cref{lem:partI-H2-restriction,lem:partI-H2-C3-C5,lem:partII-target-nonvanishing}.

\begin{lemma}
\label{lem:appendix-explicit-computations}
With
\[
I_{12}(x,y)=x^{11}y+11x^6y^6-xy^{11},
\]
the following identities hold.

\begin{enumerate}[label=\arabic*.]
\item For
\[
z_2=\frac1{\sqrt2}(i,1),
\]
one has
\[
I_{12}(z_2)=I_{12}\!\left(\frac{i}{\sqrt2},\frac1{\sqrt2}\right)
=
-\frac{11}{64}-\frac{i}{32}.
\]

\item For
\[
z_2=\frac1{\sqrt2}(i,1),
\]
the coefficient \(A_4(z_2)\) of \(x^8y^4\) in
\[
I_{12}\!\left(\frac{ix-y}{\sqrt2},\,\frac{x-iy}{\sqrt2}\right)
\]
is
\[
A_4(z_2)=-\frac{165}{64}-\frac{165}{32}i.
\]
In particular,
\[
\Re(A_4(z_2))=-\frac{165}{64}\neq 0.
\]

\item For
\[
r=\sqrt{\frac{1+1/\sqrt3}{2}},
\qquad
s=\sqrt{\frac{1-1/\sqrt3}{2}},
\qquad
z_3=(r,se^{-i\pi/4}),
\]
the coefficient \(A_3(z_3)\) of \(x^9y^3\) in
\[
I_{12}\!\left(rx-s\frac{1+i}{\sqrt2}y,\,
s\frac{1-i}{\sqrt2}x+ry\right)
\]
is
\[
A_3(z_3)
=
-\frac{55\sqrt3}{216}-\frac{55}{216}
+
\left(\frac{55\sqrt3}{216}-\frac{55}{216}\right)i.
\]
In particular,
\[
\Re(A_3(z_3))
=
-\frac{55(\sqrt3+1)}{216}\neq 0.
\]

\item For
\[
r=\sqrt{\frac{1+1/\sqrt3}{2}},
\qquad
s=\sqrt{\frac{1-1/\sqrt3}{2}},
\qquad
z_3=(r,se^{-i\pi/4}),
\]
one has
\[
A_0(z_3)=I_{12}(z_3)
=
I_{12}\!\left(r,\,s e^{-i\pi/4}\right)
=
\frac{11\sqrt3}{216}-\frac{i}{27}.
\]
In particular,
\[
A_0(z_3)\neq 0.
\]
\end{enumerate}
\end{lemma}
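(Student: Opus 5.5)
All four identities are direct evaluations of an explicit polynomial, so I would verify them by exact symbolic expansion. The only choices to make are a change of variables that keeps the bookkeeping small and a way to keep the algebraic numbers in a closed form. Throughout I would work in the field \(\mathbb Q(\sqrt3,\sqrt2,i)\) and reduce every expression to a standard basis before comparing it with the claimed value.

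\textbf{Item 1.} I would substitute directly. Since \((1/\sqrt2)^{12}=1/64\) and \(i^{11}=-i\), \(i^6=-1\), \(i^1=i\), the three monomials contribute \(-i/64\), \(-11/64\) and \(-i/64\). Their sum is \(-\tfrac{11}{64}-\tfrac{i}{32}\).

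\textbf{Item 2.} I would introduce \(u=x+iy\) and \(w=x-iy\). Then \(ix-y=iu\) and \(x-iy=w\), so the polynomial in question equals
\[
\tfrac1{64}\,I_{12}(iu,w)=\tfrac1{64}\bigl(-i\,u^{11}w-11\,u^6w^6-i\,u\,w^{11}\bigr).
\]
The coefficient of \(x^8y^4\) in \(u^aw^b\) is the convolution
\[
\sum_k\binom ak\binom b{4-k}\,i^k(-i)^{4-k}=\sum_k\binom ak\binom b{4-k}(-1)^{k}.
\]
I would evaluate this for \((a,b)=(11,1),(6,6),(1,11)\) and assemble the three contributions, expecting \(-\tfrac{165}{64}-\tfrac{165}{32}i\). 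As a consistency check, the \(u\leftrightarrow w\) symmetry pairs the first and third terms.

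\textbf{Items 3 and 4.} These concern the order-\(3\) lift. The key simplifications are \(r^2=\tfrac{1+1/\sqrt3}{2}\), \(s^2=\tfrac{1-1/\sqrt3}{2}\), \(rs=1/\sqrt6\) and \(e^{-i\pi/4}=(1-i)/\sqrt2\).

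For item 4, each monomial \(\alpha^p\beta^q\) with \(p+q=12\) becomes \(r^ps^q e^{-iq\pi/4}\), with \((p,q)\in\{(11,1),(6,6),(1,11)\}\). Each product reduces to the form \((rs)^{\min}\,(r^2\text{ or }s^2)^{k}\). For example, \(r^{11}s=(rs)(r^2)^5\), \(r^6s^6=(rs)^6=1/216\), and \(rs^{11}=(rs)(s^2)^5\). The phases are \(e^{-i\pi/4}\), \(e^{-3i\pi/2}=i\) and \(e^{-11i\pi/4}\). The \(\sqrt2\)'s from \(rs=1/\sqrt6\) and from the phases cancel. The middle term gives the imaginary part, and the outer two give a real multiple of \(\sqrt3\); I would expand \((1\pm1/\sqrt3)^5\) binomially to confirm the value \(\tfrac{11\sqrt3}{216}-\tfrac{i}{27}\).

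For item 3, I need the coefficient of \(x^9y^3\) in \(I_{12}(rx-\bar\beta y,\ \beta x+ry)\). For each monomial \(X^aY^b\) of \(I_{12}\), I would expand
\[
(rx-\bar\beta y)^a(\beta x+ry)^b
\]
and collect the terms with three \(y\)'s. This is a sum over \(k+l=3\), where \(k\) counts the \(y\)'s drawn from the first factor and \(l\) those drawn from the second:
\[
\binom ak\binom bl\,r^{a-k}(-\bar\beta)^k\,\beta^{\,b-l}r^{\,l}.
\]
I would then write \(\beta^m\bar\beta^{k}=s^{m+k}e^{-i(m-k)\pi/4}\) and reduce the powers of \(r,s\) as above.

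This item is the main obstacle. Three monomials, each with up to four index pairs and distinct phases, make sign and power errors likely. To control this, I would tabulate each \((a,b,k,l)\) contribution in the normalized form \(c\cdot(rs)^{j}(r^2)^{\alpha}(s^2)^{\beta}\,e^{i\theta}\) before summing. I would then cross-check in two ways. First, \(|A_3|^2\) summed appropriately is constrained by the \(SU(2)\)-invariant norm \(\sum_j|A_j(z)|^2\binom{12}{j}^{-1}=\|I_{12}\|^2\), which is independent of \(z\). Second, a floating-point evaluation should match the claimed exact value \(\approx -0.6953+0.1862\,i\).
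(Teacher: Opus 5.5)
Your method (exact expansion of the three monomials of \(I_{12}\) after substitution) is the same as the paper's, and all four stated values are correct. In item 2, your substitution \(u=x+iy\), \(w=x-iy\) is a genuine simplification. Each convolution becomes the coefficient of \(t^4\) in \((1-t)^a(1+t)^b\), which is \(165\), \(15\), \(165\) for \((a,b)=(11,1),(6,6),(1,11)\). The coefficient is therefore \(\tfrac1{64}(-165i-165-165i)\) at once. By contrast, the paper's term-by-term expansion records \(-\tfrac{495}{64}i\) for each outer term, which is a sign slip; the correct value is \(-\tfrac{165}{64}i\).

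In item 4, however, your bookkeeping claim is false: the outer two terms are not real. Since \(-rs^{11}e^{-11i\pi/4}=rs^{11}e^{i\pi/4}\) and \(r\neq s\), they sum to
\[
\frac{rs}{\sqrt2}\Bigl[(r^{10}+s^{10})+i\,(s^{10}-r^{10})\Bigr]
=\frac{\sqrt3}{6}\Bigl[\frac{11}{36}-\frac{19\sqrt3}{108}\,i\Bigr]
=\frac{11\sqrt3}{216}-\frac{19}{216}\,i .
\]
So the imaginary part \(-\tfrac1{27}\) equals \(\tfrac{11}{216}-\tfrac{19}{216}\). Attributing it to the middle term alone would give \(+\tfrac{11}{216}i\) and an apparent contradiction.

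You leave item 3 unexecuted, so here are checkpoints your tabulation should produce. Put \(U=rx-\bar\beta y\), \(V=\beta x+ry\), \(\bar\beta=se^{i\pi/4}\), \(\beta=se^{-i\pi/4}\). Then \(\bar\beta^2=is^2\), \(\bar\beta^3\beta=is^4\), \(\beta^8=s^8\) and \(\bar\beta\beta^9=s^{10}\). The monomials \(U^{11}V\), \(11U^6V^6\), \(-UV^{11}\) contribute
\[
C_1=i\bigl(55r^{10}s^2-165r^8s^4\bigr)=\frac{55(1+\sqrt3)}{216}\,i,\qquad
C_2=-\frac{55}{108}(1+i),\qquad
C_3=55r^2s^{10}-165r^4s^8=\frac{55(1-\sqrt3)}{216}.
\]
In \(C_2\) the inner sum factors as \(10r^3s^3(r^2-s^2)(2r^4-7r^2s^2+2s^4)\), with \(2r^4-7r^2s^2+2s^4=\tfrac16\). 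The three pieces sum to the stated value.

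Do not use the paper's displayed intermediates as checkpoints. It writes that inner sum as \(10r^3s^3(r^2-s^2)^3\), which is twice too large at these \(r,s\). It also gives \(C_3\) a spurious factor \(i\), although its final value is right.

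Two smaller points:
\begin{enumerate}
\item The numerical target is \(-0.6957+0.1864\,i\), not \(-0.6953+0.1862\,i\).
\item The invariant-norm identity is a single scalar relation among all thirteen \(|A_j(z_3)|^2\). It cannot confirm \(A_3(z_3)\) unless you also compute the other twelve coefficients.
\end{enumerate}
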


\begin{proof}
For (1), substitute
\[
x=\frac{i}{\sqrt2},
\qquad
y=\frac1{\sqrt2}
\]
into
\[
I_{12}(x,y)=x^{11}y+11x^6y^6-xy^{11}.
\]
Since
\[
x^{11}y=\frac{i^{11}}{2^6}=-\frac{i}{64},
\qquad
11x^6y^6=11\frac{i^6}{2^6}=-\frac{11}{64},
\qquad
-xy^{11}=-\frac{i}{64},
\]
we obtain
\[
I_{12}\!\left(\frac{i}{\sqrt2},\frac1{\sqrt2}\right)
=
-\frac{11}{64}-\frac{i}{32}.
\]

For (2), write
\[
u=\frac{ix-y}{\sqrt2},
\qquad
v=\frac{x-iy}{\sqrt2}.
\]
Then
\[
I_{12}(u,v)=u^{11}v+11u^6v^6-uv^{11}.
\]
We compute the coefficient of \(x^8y^4\) term-by-term.

First,
\[
u^{11}v
=
\frac1{2^6}(ix-y)^{11}(x-iy).
\]
To obtain \(x^8y^4\), we need either \(x^7y^4\) from \((ix-y)^{11}\) times \(x\),
or \(x^8y^3\) from \((ix-y)^{11}\) times \(-iy\). Thus the contribution is
\[
\frac1{64}\left[
\binom{11}{4} i^7(-1)^4
+
\binom{11}{3} i^8(-1)^3(-i)
\right]
=
\frac1{64}\left[-330i-165i\right]
=
-\frac{495}{64}i.
\]

Second,
\[
11u^6v^6=\frac{11}{2^6}(ix-y)^6(x-iy)^6.
\]
To obtain \(x^8y^4\), sum over all decompositions with \(a+b=4\):
\[
\frac{11}{64}\sum_{a=0}^4
\binom{6}{a}\binom{6}{4-a}
i^{6-a}(-1)^a(-i)^{4-a}
=
-\frac{165}{64}.
\]

Third,
\[
-uv^{11}=-\frac1{2^6}(ix-y)(x-iy)^{11}.
\]
To obtain \(x^8y^4\), we need either \(x^7y^4\) from \((x-iy)^{11}\) times \(ix\),
or \(x^8y^3\) from \((x-iy)^{11}\) times \(-y\). This gives
\[
-\frac1{64}\left[
i\binom{11}{4}(-i)^4
-
\binom{11}{3}(-i)^3
\right]
=
-\frac1{64}\left[330i+165i\right]
=
-\frac{495}{64}i.
\]

Adding the three contributions yields
\[
A_4(z_2)
=
-\frac{165}{64}
-\frac{495}{64}i
-\frac{495}{64}i
=
-\frac{165}{64}-\frac{990}{64}i
=
-\frac{165}{64}-\frac{165}{32}i.
\]
The real-part statement is immediate.

For (3), write
\[
u=rx-s\frac{1+i}{\sqrt2}y,
\qquad
v=s\frac{1-i}{\sqrt2}x+ry.
\]
Then
\[
I_{12}(u,v)=u^{11}v+11u^6v^6-uv^{11}.
\]
We compute the coefficient of \(x^9y^3\) term-by-term.

Set
\[
\alpha=s\frac{1+i}{\sqrt2},
\qquad
\beta=s\frac{1-i}{\sqrt2}.
\]
Then
\[
u=rx-\alpha y,
\qquad
v=\beta x+ry.
\]

\medskip

\noindent\textbf{Contribution from \(u^{11}v\).}
To obtain \(x^9y^3\), there are two possibilities:

\begin{enumerate}[label=\roman*.]
\item choose \(x^8y^3\) from \(u^{11}\) and \(\beta x\) from \(v\);
\item choose \(x^9y^2\) from \(u^{11}\) and \(ry\) from \(v\).
\end{enumerate}

Thus the contribution is
\[
\binom{11}{3}r^8(-\alpha)^3\beta+\binom{11}{2}r^{10}(-\alpha)^2.
\]
Using
\[
(1+i)^2=2i,\qquad (1+i)^3=-2+2i,
\]
one finds
\[
(-\alpha)^2=s^2 i,
\qquad
(-\alpha)^3\beta=-s^4 i.
\]
Hence
\[
C_1
=
-165\,r^8s^4\,i+55\,r^{10}s^2\,i.
\]

\medskip

\noindent\textbf{Contribution from \(11u^6v^6\).}
We must choose terms whose total \(y\)-degree is \(3\). Let \(a\) be the number
of \(y\)'s taken from \(u^6\), so that \(3-a\) are taken from \(v^6\). Then
\[
C_2
=
11\sum_{a=0}^3
\binom{6}{a}\binom{6}{3-a}
r^{9-2a}\,(-\alpha)^a\,\beta^{3+a}.
\]
Since
\[
(-\alpha)^a\beta^{3+a}
=
(-1)^a s^{3+2a}\frac{(1+i)^a(1-i)^{3+a}}{2^{(2a+3)/2}},
\]
and
\[
(1+i)^a(1-i)^a=2^a,
\qquad
(1-i)^3=-2-2i,
\]
this becomes
\[
C_2
=
11(-2-2i)2^{-3/2}
\sum_{a=0}^3
(-1)^a
\binom{6}{a}\binom{6}{3-a}
r^{9-2a}s^{3+2a}.
\]
The inner sum is
\[
20r^9s^3-90r^7s^5+90r^5s^7-20r^3s^9
=
10r^3s^3(r^2-s^2)^3.
\]
Since
\[
r^2-s^2=\frac1{\sqrt3},
\]
we obtain
\[
C_2
=
11(-2-2i)2^{-3/2}\cdot \frac{10r^3s^3}{3\sqrt3}.
\]

\medskip

\noindent\textbf{Contribution from \(-uv^{11}\).}
Again there are two possibilities:

\begin{enumerate}[label=\roman*.]
\item choose \(-\alpha y\) from \(u\) and \(x^9y^2\) from \(v^{11}\);
\item choose \(rx\) from \(u\) and \(x^8y^3\) from \(v^{11}\).
\end{enumerate}

Thus the contribution is
\[
-\Bigl(\binom{11}{2}(-\alpha)\beta^9r^2+\binom{11}{3}r\beta^8r^3\Bigr).
\]
Using
\[
(1-i)^2=-2i,\qquad (1-i)^3=-2-2i,
\]
one finds after simplification
\[
C_3=-165\,r^4s^8\,i+55\,r^2s^{10}\,i.
\]

\medskip

Therefore
\[
A_3(z_3)=C_1+C_2+C_3.
\]
Now use
\[
r^2=\frac{1+1/\sqrt3}{2},
\qquad
s^2=\frac{1-1/\sqrt3}{2},
\qquad
r^2+s^2=1,
\qquad
r^2s^2=\frac16,
\qquad
r^2-s^2=\frac1{\sqrt3}.
\]
A direct simplification gives
\[
A_3(z_3)
=
-\frac{55\sqrt3}{216}-\frac{55}{216}
+
\left(\frac{55\sqrt3}{216}-\frac{55}{216}\right)i.
\]
The real-part statement follows immediately.

For (4), we evaluate
\[
I_{12}(x,y)=x^{11}y+11x^6y^6-xy^{11}
\]
at
\[
x=r,
\qquad
y=s e^{-i\pi/4},
\qquad
r=\sqrt{\frac{1+1/\sqrt3}{2}},
\qquad
s=\sqrt{\frac{1-1/\sqrt3}{2}}.
\]
Since
\[
e^{-i\pi/4}=\frac{1-i}{\sqrt2},
\qquad
e^{-i6\pi/4}=e^{-3\pi i/2}=i,
\qquad
e^{-i11\pi/4}=e^{-i3\pi/4}=\frac{-1-i}{\sqrt2},
\]
we get
\[
I_{12}(z_3)
=
\frac{r^{11}s}{\sqrt2}(1-i)
+11i\,r^6s^6
+\frac{rs^{11}}{\sqrt2}(1+i).
\]
Therefore
\[
\Re I_{12}(z_3)=\frac{rs}{\sqrt2}(r^{10}+s^{10}),
\qquad
\Im I_{12}(z_3)=\frac{rs}{\sqrt2}(-r^{10}+s^{10})+11r^6s^6.
\]

Now set
\[
a=r^2=\frac{1+1/\sqrt3}{2},
\qquad
b=s^2=\frac{1-1/\sqrt3}{2}.
\]
Then
\[
a+b=1,\qquad ab=\frac16,\qquad a-b=\frac1{\sqrt3}.
\]
Also,
\[
\frac{rs}{\sqrt2}=\frac{\sqrt3}{6},
\qquad
r^6s^6=(ab)^3=\frac1{216}.
\]
A direct exact computation gives
\[
a^5+b^5=\frac{11}{36},
\qquad
b^5-a^5=-\frac{19\sqrt3}{108}.
\]
Hence
\[
r^{10}+s^{10}=\frac{11}{36},
\qquad
s^{10}-r^{10}=-\frac{19\sqrt3}{108}.
\]
Substituting into the previous formulas,
\[
\Re I_{12}(z_3)
=
\frac{\sqrt3}{6}\cdot \frac{11}{36}
=
\frac{11\sqrt3}{216},
\]
and
\[
\Im I_{12}(z_3)
=
\frac{\sqrt3}{6}\left(-\frac{19\sqrt3}{108}\right)+\frac{11}{216}
=
-\frac{19}{216}+\frac{11}{216}
=
-\frac1{27}.
\]
Therefore
\[
A_0(z_3)=I_{12}(z_3)=\frac{11\sqrt3}{216}-\frac{i}{27}\neq0.
\]
\end{proof}

\section{A Schur-complement reduction formula}
\label{appendix:schur-complement-reduction}

This appendix records the elementary block-Hessian identity used in the proof of
\Cref{lem:MB-circle-splitting}.

\begin{lemma}
\label{lem:appendix-schur-complement}
Let \(F(\theta,u)\) be a smooth function near \((\theta_0,u_0)\in \R\times\R^m\).
Assume that
\[
\nabla_u F(\theta_0,u_0)=0
\]
and that
\[
D_u^2F(\theta_0,u_0)
\]
is invertible. Then, after solving the normal critical-point equation
\[
\nabla_u F(\theta,\xi(\theta))=0
\]
for a smooth local graph \(u=\xi(\theta)\), the reduced function
\[
G(\theta)=F(\theta,\xi(\theta))
\]
satisfies
\[
G''(\theta_0)
=
\partial_{\theta\theta}F(\theta_0,u_0)
-
\partial_{\theta u}F(\theta_0,u_0)
\bigl(D_u^2F(\theta_0,u_0)\bigr)^{-1}
\partial_{u\theta}F(\theta_0,u_0).
\]
Consequently, the full Hessian of \(F\) at \((\theta_0,u_0)\) is nondegenerate
if and only if both
\[
D_u^2F(\theta_0,u_0)
\]
and
\[
G''(\theta_0)
\]
are nonzero/invertible.
\end{lemma}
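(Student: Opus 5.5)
We prove the Schur-complement lemma (\Cref{lem:appendix-schur-complement}) in three short steps: produce the graph \(\xi\), differentiate the reduced function twice, and finish with a block factorization of the Hessian.

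\textbf{Step 1: the graph.} Since \(D_u^2F(\theta_0,u_0)\) is invertible, the implicit function theorem applied to \(\Phi(\theta,u)=\nabla_uF(\theta,u)\) gives a smooth map \(\xi\) near \(\theta_0\). It satisfies \(\xi(\theta_0)=u_0\) and \(\nabla_uF(\theta,\xi(\theta))\equiv0\). Differentiating this identity once yields
\[
\partial_{u\theta}F+D_u^2F\,\xi'(\theta)=0,
\qquad\text{so}\qquad
\xi'(\theta_0)=-\bigl(D_u^2F\bigr)^{-1}\partial_{u\theta}F
\]
at \((\theta_0,u_0)\).

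\textbf{Step 2: the second derivative of \(G\).} By the chain rule,
\[
G'(\theta)=\partial_\theta F(\theta,\xi(\theta))+\nabla_uF(\theta,\xi(\theta))\cdot\xi'(\theta).
\]
The second term vanishes identically, so \(G'(\theta)=\partial_\theta F(\theta,\xi(\theta))\). This is the only point needing care: we use that \(\nabla_uF\) vanishes along the whole graph, not just at \(\theta_0\). Differentiating again gives
\[
G''(\theta)=\partial_{\theta\theta}F+\partial_{\theta u}F\,\xi'(\theta).
\]
Substituting \(\xi'(\theta_0)\) from Step 1 gives the stated formula.

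\textbf{Step 3: nondegeneracy.} Write the Hessian of \(F\) at \((\theta_0,u_0)\) in block form:
\[
\mathcal H=\begin{pmatrix} a & b^T\\ b & D\end{pmatrix},
\qquad a=\partial_{\theta\theta}F,\quad b=\partial_{u\theta}F,\quad D=D_u^2F .
\]
With \(D\) invertible, the block factorization
\[
\mathcal H=\begin{pmatrix}1 & b^TD^{-1}\\ 0 & I\end{pmatrix}
\begin{pmatrix}a-b^TD^{-1}b & 0\\ 0 & D\end{pmatrix}
\begin{pmatrix}1 & 0\\ D^{-1}b & I\end{pmatrix}
\]
gives \(\det\mathcal H=\det D\cdot\bigl(a-b^TD^{-1}b\bigr)=\det D\cdot G''(\theta_0)\).

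Under the standing hypothesis that \(D\) is invertible, \(\mathcal H\) is therefore nondegenerate if and only if \(G''(\theta_0)\neq0\). That is the form in which the lemma is used in \Cref{lem:MB-circle-splitting}. The phrase ``if and only if both'' should be read with invertibility of \(D\) assumed, as it is in the hypothesis. By Sylvester's law of inertia applied to the congruence, the index of \(\mathcal H\) equals the index of \(D\) plus one if \(G''(\theta_0)<0\), and plus zero otherwise. This also justifies the index count used in \Cref{prop:partI-C2-six-points}.

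The argument is routine and has no real obstacle; the only subtle point is the cancellation in Step 2.
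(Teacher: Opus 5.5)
Your proposal is correct and follows the paper's proof essentially step for step: you implicitly differentiate the normal equation to get \(\xi'(\theta_0)\), use that \(\nabla_uF\) vanishes along the whole graph to obtain \(G'(\theta)=\partial_\theta F(\theta,\xi(\theta))\), and then differentiate once more. The only difference is that you write out the explicit block factorization, giving \(\det \Hess F=\det D_u^2F\cdot G''(\theta_0)\) and, via Sylvester's law, the index count. The paper instead just cites the standard Schur-complement criterion, so this is an elaboration of the same argument rather than a different route.
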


\begin{proof}
Differentiate
\[
\nabla_u F(\theta,\xi(\theta))=0
\]
with respect to \(\theta\). This gives
\[
\partial_{u\theta}F(\theta,\xi(\theta))
+
D_u^2F(\theta,\xi(\theta))\,\xi'(\theta)=0.
\]
At \(\theta=\theta_0\),
\[
\xi'(\theta_0)
=
-\bigl(D_u^2F(\theta_0,u_0)\bigr)^{-1}
\partial_{u\theta}F(\theta_0,u_0).
\]

Now
\[
G'(\theta)
=
\partial_\theta F(\theta,\xi(\theta))
+
\langle \nabla_u F(\theta,\xi(\theta)),\xi'(\theta)\rangle
=
\partial_\theta F(\theta,\xi(\theta)),
\]
because \(\nabla_u F(\theta,\xi(\theta))=0\). Differentiating once more,
\[
G''(\theta)
=
\partial_{\theta\theta}F(\theta,\xi(\theta))
+
\partial_{\theta u}F(\theta,\xi(\theta))\,\xi'(\theta).
\]
Evaluating at \(\theta_0\) and substituting the formula for \(\xi'(\theta_0)\)
yields
\[
G''(\theta_0)
=
\partial_{\theta\theta}F(\theta_0,u_0)
-
\partial_{\theta u}F(\theta_0,u_0)
\bigl(D_u^2F(\theta_0,u_0)\bigr)^{-1}
\partial_{u\theta}F(\theta_0,u_0).
\]

The final statement is the standard Schur-complement criterion for the block
matrix of the Hessian of \(F\) at \((\theta_0,u_0)\).
\end{proof}

\end{document}